
\documentclass[12pt]{amsart}
\usepackage{amsmath,amsthm,amsfonts,amssymb,mathrsfs}
\usepackage{color}

\usepackage{tikz}

\usepackage{amssymb,cite}
\usepackage[colorlinks,plainpages,citecolor=magenta, linkcolor=blue, backref]{hyperref}

\usepackage{hyperref}
\date{\today}

 \setlength{\textwidth}{18.6truecm}
 \setlength{\textheight}{25.truecm}
 \setlength{\oddsidemargin}{-30pt}
 \setlength{\evensidemargin}{-30pt}
 \setlength{\topmargin}{-35pt}


\usepackage{hyperref}
\input{xy}
 \xyoption{all}
 \xyoption{arc}

\newtheorem{theorem}{Theorem}[section]

\newtheorem{proposition}[theorem]{Proposition}
\newtheorem{corollary}[theorem]{Corollary}

\newtheorem{lemma}[theorem]{Lemma}
\theoremstyle{definition}
\newtheorem{definition}[theorem]{Definition}
\newtheorem{construction}[theorem]{Construction}
\newtheorem{example}[theorem]{Example}
\newtheorem{remark}[theorem]{Remark}

\begin{document}

\title[On semitopological simple inverse $\omega$-semigroups with compact maximal subgroups]{On semitopological simple inverse $\omega$-semigroups with compact maximal subgroups}  
\author{Oleg Gutik and Kateryna Maksymyk}
\address{Ivan Franko National University of Lviv, Universytetska 1, Lviv, 79000, Ukraine}
\email{oleg.gutik@lnu.edu.ua, kateryna.maksymyk@lnu.edu.ua}

\keywords{Bicyclic semigroup, simple inverse $\omega$-semigroup, semitopological semigroup, locally compact, topological semigroup, compact maximal subgroup, adjoining zero, compact ideal}

\subjclass[2020]{20M18, 22A15, 54A10, 54D30, 54D40, 54D45, 54H11}

\begin{abstract}
We describe the structure of ($0$-)simple inverse Hausdorff semitopological $\omega$-semigroups with compact maximal subgroups. In particular, we show that if $S$ is a simple inverse Hausdorff semitopological $\omega$-semigroup with compact maximal subgroups, then $S$ is topologically isomorphic to the Bruck--Reilly extension $\left(\textbf{BR}(T,\theta),\tau_{\textbf{BR}}^{\oplus}\right)$ of a finite semilattice $T=\left[E;G_\alpha,\varphi_{\alpha,\beta}\right]$ of compact groups $G_\alpha$ in the class of topological inverse semigroups, where $\tau_{\textbf{BR}}^{\oplus}$ is the sum direct topology on $\textbf{BR}(T,\theta)$. Also we prove that every Hausdorff locally compact shift-continuous topology on the simple inverse Hausdorff semitopological $\omega$-semigroups with compact maximal subgroups with adjoined zero is either compact or the zero is an isolated point.

\end{abstract}

\maketitle


\section{Introduction, motivation and main definitions}

We shall follow the terminology of~\cite{Carruth-Hildebrant-Koch=1983, Clifford-Preston-1961, Clifford-Preston-1967, Engelking=1989, Lawson=1998, Ruppert=1984}. By $\omega$ we denote the set of all non-negative integers, by $\mathbb{N}$ the set of all positive integers. All topological spaces, considered in this paper, are Hausdorff, if the otherwise is not stated explicitly.
If $A$ is a subset of a topological space $X$ then by $\mathrm{cl}_X(A)$ and $\mathrm{int}_X(A)$ we denote the closure and interior of $A$ in $X$, respectively.

Let $\mathfrak{h}\colon S\to T$ be a map of sets. Then for any $s\in S$ and $A\subseteq S$ by $(s)\mathfrak{h}$ and $(A)\mathfrak{h}$ we denote the images of $s$ and $A$, respectively, under the map $\mathfrak{h}$. Also, for any $t\in T$ and $B\subseteq T$ by $(s)\mathfrak{h}^{-1}$ and $(B)\mathfrak{h}^{-1}$ we denote the full preimages of $t$ and $B$, respectively, under the map $\mathfrak{h}$.

A semigroup $S$ is called {\it inverse} if for any
element $x\in S$ there exists a unique $x^{-1}\in S$ such that
$xx^{-1}x=x$ and $x^{-1}xx^{-1}=x^{-1}$. The element $x^{-1}$ is
called the {\it inverse of} $x\in S$. If $S$ is an inverse
semigroup, then the function $\operatorname{inv}\colon S\to S$
which assigns to every element $x$ of $S$ its inverse element
$x^{-1}$ is called the {\it inversion}.

If $S$ is a semigroup, then we shall denote the subset of all
idempotents in $S$ by $E(S)$. If $S$ is an inverse semigroup, then
$E(S)$ is closed under multiplication and we shall refer to $E(S)$ as a
\emph{band} (or the \emph{band of} $S$). Then the semigroup
operation on $S$ determines the following partial order $\preccurlyeq$
on $E(S)$: $e\preccurlyeq f$ if and only if $ef=fe=e$. This order is
called the {\em natural partial order} on $E(S)$. A \emph{semilattice} is a commutative semigroup of idempotents. A \emph{chain} is a linearly ordered semilattice.

A semigroup $S$ is said to be \emph{simple} (\emph{$0$-simple}) if $S$ has no proper two-sided ideals (if $S$ has the zero $\mathbf{0}$ and $\{\mathbf{0}\}$ is the unique proper two-sided ideal of $S$). A semigroup $S$ is called an \emph{$\omega$-semigroup} if the band $E(S)$ is order isomorphic to $(\omega,\geqslant)$. Also, an inverse semigroup $S$ is $0$-simple $\omega$-semigroup if $S$ is $0$-simple and the subset of non-zero idempotents $E(S)\setminus\{\mathbf{0}\}$ is order isomorphic to $(\omega,\geqslant)$.


If $S$ is an inverse semigroup, then the semigroup operation on $S$ determines the following partial order $\preccurlyeq$
on $S$: $s\preccurlyeq t$ if and only if there exists $e\in E(S)$ such that $s=te$. This order is
called the {\em natural partial order} on $S$ \cite{Wagner-1952}.

The bicyclic monoid ${\mathscr{C}}(p,q)$ is the semigroup with the identity $1$ generated by two elements $p$ and $q$ subjected only to the condition $pq=1$. The semigroup operation on ${\mathscr{C}}(p,q)$ is determined as
follows:
\begin{equation*}
    q^kp^l\cdot q^mp^n=q^{k+m-\min\{l,m\}}p^{l+n-\min\{l,m\}}.
\end{equation*}
It is well known that the bicyclic monoid ${\mathscr{C}}(p,q)$ is a bisimple (and hence, simple) combinatorial $E$-unitary inverse semigroup and every non-trivial congruence on ${\mathscr{C}}(p,q)$ is a group congruence \cite{Clifford-Preston-1961}.

Using the construction of the bicyclic monoid Bruck build the construction of isomorphic embedding of any (inverse) semigroup into a simple inverse monoid (see \cite{Bruck-1958}   and \cite[Section 8.5]{Clifford-Preston-1967}). Later Reilly \cite{Reilly=1966} and  Warne \cite{Warne=1966} generalized the Bruck construction for the description of of the structure of bisimple regular $\omega$-semigroups in the following way.

\begin{construction}[{\cite{Reilly=1966,Warne=1966}}]
Let $S$ be monoid with the unit element $1_S$ and let $\theta\colon S\to H_{1_s}$ be a homomorphism from $S$ into the group  of units $H({1_s})$ of $S$. On the set $\mathbf{BR}(S,\theta)=\omega\times S\times \omega$ we define the semigroup operation by the formula
\begin{equation*}
  (i,s,j)\cdot (k.t,l)=\left(i+k-\min\{j,k\},(s)\theta^{k-\min\{j,k\}}(t)\theta^{j-\min\{j,k\}},j+l-\min\{j,k\}\right),
\end{equation*}
where $i,j,k,l\in\omega$, $s,t\in S$ and $\theta^0$ is the identity map on $S$. Then $\mathbf{BR}(S,\theta)$ with such defined semigroup operation is called the \emph{Bruck-Reilly extension} of $S$.
\end{construction}

Later we assume that $S$ is a monoid.

For arbitrary $i,j\in \omega$ and non-empty subset $A$ of the semigroup $S$ we define the subset $A_{i,j}$  of $\textbf{BR}(S,\theta)$ as follows: $A_{i,j}=\left\{(i, s, j)\colon s\in A\right\}$.

We observe that if S is a trivial monoid then $\mathbf{BR}(S,\theta)$ is isomorphic to the bicyclic semigroup
${\mathscr{C}}(p,q)$ and in case when $\theta$ is an annihilating homomorphism (i.e., $(s)\theta=1_S$), then $\mathbf{BR}(S)=\mathbf{BR}(S,\theta)$  is called the \emph{Bruck semigroup over monoid} $S$ \cite{Gutik=1994}. Also Reilly and Warne proved that every bisimple regular $\omega$-semigroup is isomorphic to the Bruck–Reilly extension of a some group \cite{Reilly=1966,Warne=1966}.

Later we need the following proposition  which is a simple generalization of Lemma~1.2 from \cite{Munn-Reilly=1966} and follows from Theorem~XI.1.1 of \cite{Petrich=1984}.

\begin{proposition}\label{proposition-1.3}
Let $S$ be an arbitrary monoid and  $\theta\colon S\to H_S(1)$ a homomorphism from $S$ into the group of units $H_S(1)$ of $S$. Then a map \emph{$\eta\colon \textbf{BR}(S,\theta)\to \mathscr{C}(p,q)$} defined by the formula $\eta(i, s, j)=q^ip^j$ is a homomorphism and hence the relation $\eta^\natural$ on \emph{$\textbf{BR}(S,\theta)$} defined in the following way
\begin{equation*}
  (i, s, j)\eta^\natural(m,t,n) \qquad \hbox{if and only if} \qquad i=m \quad \hbox{and} \quad j=n
\end{equation*}
is a congruence.
\end{proposition}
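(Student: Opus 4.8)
The plan is to verify directly that $\eta$ preserves multiplication, then invoke the standard fact that the kernel of a semigroup homomorphism is a congruence, and finally identify that kernel with the relation written in the statement by using the uniqueness of the normal form in $\mathscr{C}(p,q)$.

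First I would take two arbitrary elements $(i,s,j)$ and $(k,t,l)$ of $\mathbf{BR}(S,\theta)$ and compute both sides of the homomorphism identity. Applying the Bruck--Reilly multiplication and then $\eta$ (which simply discards the middle coordinate) gives
\begin{equation*}
  \eta\big((i,s,j)\cdot(k,t,l)\big)=q^{\,i+k-\min\{j,k\}}\,p^{\,j+l-\min\{j,k\}}.
\end{equation*}
On the other hand, $\eta(i,s,j)\cdot\eta(k,t,l)=q^ip^j\cdot q^kp^l$, and the displayed multiplication rule for the bicyclic monoid (with $m=k$ and $n=l$) yields exactly $q^{\,i+k-\min\{j,k\}}\,p^{\,j+l-\min\{j,k\}}$. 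Since the two expressions coincide, $\eta$ is a homomorphism. Note that no computation involving the homomorphism $\theta$ is required: the first and third coordinates of the Bruck--Reilly product were, by definition, designed to imitate the bicyclic rule, and the middle coordinate is irrelevant for $\eta$.

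Next, since $\eta$ is a semigroup homomorphism, its kernel $\{(a,b)\colon \eta(a)=\eta(b)\}$ is a congruence on $\mathbf{BR}(S,\theta)$; this kernel is precisely the relation $\eta^\natural$ of the statement once we check when two elements are $\eta$-equivalent. But $\eta(i,s,j)=\eta(m,t,n)$ means $q^ip^j=q^mp^n$, and by the well-known fact that every element of $\mathscr{C}(p,q)$ has a unique representation of the form $q^ap^b$ with $a,b\in\omega$, this holds if and only if $i=m$ and $j=n$; the converse implication is trivial. Hence $\eta^\natural$ is the congruence described.

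There is essentially no obstacle here: the statement is a routine transfer along the Bruck--Reilly construction. The only points deserving a word of care are the alignment of the two multiplication formulas through the $\min$-bookkeeping and the appeal to the uniqueness of the normal form in $\mathscr{C}(p,q)$ (equivalently, one may cite Lemma~1.2 of \cite{Munn-Reilly=1966} or Theorem~XI.1.1 of \cite{Petrich=1984}, of which this proposition is the announced generalization).
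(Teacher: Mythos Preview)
Your proof is correct. The paper does not actually supply a proof of this proposition; it merely remarks that the result is a simple generalization of Lemma~1.2 of \cite{Munn-Reilly=1966} and follows from Theorem~XI.1.1 of \cite{Petrich=1984}, which are precisely the references you cite at the end. Your direct verification---matching the first and third coordinates of the Bruck--Reilly product against the bicyclic multiplication, invoking the kernel-congruence fact, and using the uniqueness of the normal form $q^ap^b$---is exactly the routine argument the paper is gesturing at, so there is nothing to compare.
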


Later we need the following well-known construction.

\begin{construction}[\cite{Petrich=1984}]
Let $E$ be a semilattice. To each $\alpha\in E$ associate a semigroup $S_\alpha$ and assume that $S_\alpha\cap S_\beta=\varnothing$ if $\alpha\neq\beta$. For each pair  $\beta\preccurlyeq\alpha$, let $\varphi_{\alpha,\beta}\colon S_\alpha\to S_\beta$ be a homomorphism, and assume that the following conditions hold:
\begin{enumerate}
  \item $\varphi_{\alpha,\alpha}\colon S_\alpha\to S_\alpha$ is the identity map of $S_\alpha$ for any $\alpha\in S_\alpha$;
  \item if $\gamma\preccurlyeq\beta\preccurlyeq\alpha$ in $E$, then $\varphi_{\alpha,\beta}\varphi_{\beta,\gamma}=\varphi_{\alpha,\gamma}$.
\end{enumerate}
On the set $S=\bigcup\limits_{\alpha\in E} S_{\alpha}$ define a semigroup operation by the formula
\begin{equation*}
  a*b=\left((a)\varphi_{\alpha,\alpha\beta}\right)\left((b)\varphi_{\beta,\alpha\beta}\right)
\end{equation*}
if $a\in S_\alpha$, $b\in S_\beta$, and denote $S=\left[E;S_\alpha,\varphi_{\alpha,\beta}\right]$.  The semigroup $\left[E;S_\alpha,\varphi_{\alpha,\beta}\right]$ is called a (\emph{strong}) \emph{se\-mi\-lattice of semigroups} $S_\alpha$.
\end{construction}

Well-known Clifford's Theorem state that an inverse semigroup $S$ is Clifford (i.e., $E(S)$ is contained in the center of $S$) if and only if $S$ is a semilattice of groups (see \cite[Theorem~II.2.6]{Petrich=1984}).

In \cite{Kochin=1968} Kochin showed that every simple inverse $\omega$-semigroup is isomorphic to the Bruck–Reilly extension $\mathbf{BR}(S,\theta)$ of a finite chain of groups $S=\left[E;G_\alpha,\varphi_{\alpha,\beta}\right]$.

A continuous map $f\colon X\to Y$ from a topological space $X$ into a topological space $Y$ is called:
\begin{itemize}
  \item[$\bullet$] \emph{quotient} if the set $(U)f^{-1}$ is open in $X$ if and only if $U$ is open in $Y$ (see \cite{Moore-1925} and \cite[Section~2.4]{Engelking=1989});
  \item[$\bullet$] \emph{hereditarily quotient} (or \emph{pseudoopen}) if for every $B\subset Y$ the restriction $f|_{B}\colon (B)f^{-1}\to B$ of $f$ is a quotient map (see \cite{McDougle-1958} and \cite[Section~2.4]{Engelking=1989});
  \item[$\bullet$] \emph{open} if $(U)f$ is open in $Y$ for every open subset $U$ in $X$;
  \item[$\bullet$] \emph{closed} if $(F)f$ is closed in $Y$ for every closed subset $F$ in $X$;
  \item[$\bullet$] \emph{perfect} if $X$ is Hausdorff, $f$ is a closed map and all fibers $(y)f^{-1}$ are compact subsets of $X$ \cite{Vainstein-1947}.
\end{itemize}
Every perfect map is closed, every closed map   and every hereditarily quotient map are quotient \cite{Engelking=1989}. Moreover, a continuous map $f\colon X\to Y$ from a topological space $X$ onto a topological space $Y$ is hereditarily quotient if and only if for every $y\in Y$ and every open subset $U$ in $X$ which contains $(y)f^{-1}$ we have that $y\in\operatorname{int}_Y(f(U))$ (see \cite[2.4.F]{Engelking=1989}).

A (\emph{semi})\emph{topological} \emph{semigroup} is a topological space with a (separately) continuous semigroup operation. An inverse topological semigroup with continuous inversion is called a \emph{topological inverse semigroup}.

\smallskip

A topology $\tau$ on a semigroup $S$ is called:
\begin{itemize}
  \item a \emph{semigroup} topology if $(S,\tau)$ is a topological semigroup;
  \item an \emph{inverse semigroup} topology if $(S,\tau)$ is a topological inverse semigroup;
  \item a \emph{shift-continuous} topology if $(S,\tau)$ is a semitopological semigroup;
  \item an \emph{inverse shift-continuous} topology if $(S,\tau)$ is a semitopological semigroup with continuous inversion.
\end{itemize}

We observe that if $S=\left[E;G_\alpha,\varphi_{\alpha,\beta}\right]$ is a semitopological Clifford semigroup then all homomorphisms $\varphi_{\alpha,\beta}$ are continuous \cite{Berglund=1973}.

It is well-known \cite{Eberhart-Selden=1969, Bertman-West=1976} that the bicyclic monoid ${\mathscr{C}}(p,q)$ admits only the discrete semigroup (shift-continuous) Hausdorff topology. Semigroup and shift-continuous $T_1$-topologies on ${\mathscr{C}}(p,q)$ are studied in \cite{Chornenka-Gutik=2023}. Topologizations of Bruck semigroups and Bruck--Reilly extensions, their topological properties and applications established in \cite{Gutik=1994, Gutik=1997, Gutik=2018, Gutik-Pavlyk=2009, Pavlyk=2008, Selden=1975, Selden=1976, Selden=1977}.

In the paper \cite{Gutik=2015} it is proved that every Hausdorff locally compact shift-continuous topology on the bicyclic monoid with adjoined zero is either compact or discrete. This result was extended by Bardyla onto the a polycyclic monoid \cite{Bardyla=2016} and graph inverse semigroups \cite{Bardyla=2018}, and by Mokrytskyi onto the monoid of order isomorphisms between principal filters of $\mathbb{N}^n$ with adjoined zero \cite{Mokrytskyi=2019}. In \cite{Bardyla=2023} Bardyla proved that a Hausdorff locally compact semitopological semigroup McAlister Semigroup $\mathcal{M}_1$ is either compact or discrete. However, this dichotomy does not hold for the McAlister Semigroup $\mathcal{M}_2$ and moreover, $\mathcal{M}_2$ admits continuum many different Hausdorff locally compact inverse semigroup topologies \cite{Bardyla=2023}.
Also, in \cite{Gutik-Maksymyk=2022} it is proved that the extended bicyclic semigroup $\mathscr{C}^0_{\mathbb{Z}}$ with adjoined zero admits distinct $\mathfrak{c}$-many shift-continuous topologies, however every Hausdorff locally compact semigroup topology on $\mathscr{C}^0_{\mathbb{Z}}$  is discrete.
 Algebraic properties on a group $G$ such that if the discrete group $G$ has these properties, then every locally compact shift continuous topology on $G$ with adjoined zero is either compact or discrete studied in \cite{Maksymyk=2019}.

In this paper we describe the structure of ($0$-)simple inverse Hausdorff semitopological $\omega$-semigroups with compact maximal subgroups. In particular, we show that if $S$ is a simple inverse Hausdorff semitopological $\omega$-semigroups with compact maximal subgroups, then $S$ is topologically isomorphic to the Bruck--Reilly extension $\left(\textbf{BR}(T,\theta),\tau_{\textbf{BR}}^{\oplus}\right)$ of a finite semilattice $T=\left[E;G_\alpha,\varphi_{\alpha,\beta}\right]$ of compact groups $G_\alpha$ in the class of topological inverse semigroups, where $\tau_{\textbf{BR}}^{\oplus}$ is the sum direct topology on $\textbf{BR}(T,\theta)$. Also we prove that every Hausdorff locally compact shift-continuous topology on the simple inverse Hausdorff semitopological $\omega$-semigroups with compact maximal subgroups with adjoined zero is either compact or  the zero is an isolated point.

\section{On simple inverse semitopological $\omega$-semigroups with compact maximal subgroups}\label{section-2}

Later we need the following simple lemma.

\begin{lemma}\label{lemma-2.1}
Let $S=\left[E;G_\alpha,\varphi_{\alpha,\beta}\right]$ be a semitopological semigroup which is a semilattice of groups $G_\alpha$. If $S$ is a topological sum of topological groups $G_\alpha$, then $S$ is a topological inverse semigroup.
\end{lemma}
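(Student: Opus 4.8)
The plan is to verify the two missing ingredients separately: separate continuity of the multiplication already holds by hypothesis, so I must upgrade it to joint continuity, and then establish continuity of the inversion. The key structural fact I will exploit is that $S=\bigsqcup_{\alpha\in E}G_\alpha$ is a topological sum, so a set $U\subseteq S$ is open exactly when $U\cap G_\alpha$ is open in $G_\alpha$ for every $\alpha\in E$, and every $G_\alpha$ is clopen in $S$. Consequently it suffices to check continuity of the relevant maps "coordinatewise" over the blocks.

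First I would handle inversion. For $a\in G_\alpha$ we have $a^{-1}\in G_\alpha$, since in a Clifford semigroup the inverse of an element of a maximal subgroup lies in that same subgroup (indeed $a^{-1}=a^{\,n-1}$ where $n$ is such that $a^n$ is the identity of $G_\alpha$, in the compact case, but in general it is just the group inverse in $G_\alpha$). Hence $\operatorname{inv}(S)=S$ and $\operatorname{inv}$ maps the clopen block $G_\alpha$ onto itself; its restriction to $G_\alpha$ is precisely the group inversion of the topological group $G_\alpha$, which is continuous. Since each $G_\alpha$ is clopen and $S$ carries the sum topology, a map on $S$ is continuous iff its restriction to each $G_\alpha$ is continuous, so $\operatorname{inv}\colon S\to S$ is continuous.

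Next I would treat joint continuity of $*$. Fix $a\in G_\alpha$, $b\in G_\beta$, and put $\gamma=\alpha\beta$, so $a*b\in G_\gamma$. Recall the product is $a*b=((a)\varphi_{\alpha,\gamma})((b)\varphi_{\beta,\gamma})$, a product inside the topological group $G_\gamma$. By the remark recorded before Lemma~\ref{lemma-2.1} (continuity of the structure homomorphisms in a semitopological Clifford semigroup), each $\varphi_{\alpha,\gamma}\colon G_\alpha\to G_\gamma$ is continuous; alternatively this follows directly because $\varphi_{\alpha,\gamma}$ is the left translation of $G_\alpha$ by the idempotent $\gamma$ composed with the identification of $\gamma G_\alpha$ with a subset of $G_\gamma$, and left translations are continuous in a semitopological semigroup. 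Now take an open neighbourhood $W$ of $a*b$ in $S$; without loss of generality $W\subseteq G_\gamma$ and $W$ is open in $G_\gamma$. Using joint continuity of multiplication in the topological group $G_\gamma$ together with continuity of $\varphi_{\alpha,\gamma}$ and $\varphi_{\beta,\gamma}$, choose open sets $U\ni a$ in $G_\alpha$ and $V\ni b$ in $G_\beta$ with $((U)\varphi_{\alpha,\gamma})((V)\varphi_{\beta,\gamma})\subseteq W$. Then $U$ and $V$ are open in $S$ (blocks are clopen) and $U*V\subseteq W$, which gives joint continuity at $(a,b)$.

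The only real subtlety — and the step I expect to be the main obstacle — is making precise the claim that the restricted maps $\varphi_{\alpha,\gamma}$ are continuous as maps between the topological groups $G_\alpha$ and $G_\gamma$ and that, more importantly, the product $a*b$ really can be computed entirely within the single topological group $G_\gamma$ so that its native (jointly continuous) multiplication applies. This is where one invokes the cited fact from \cite{Berglund=1973} that in a semitopological semilattice of groups all bonding homomorphisms are continuous; once that is in hand, the coordinatewise description of the sum topology turns both verifications into routine neighbourhood chases, and no compactness of the $G_\alpha$ is needed for this particular lemma.
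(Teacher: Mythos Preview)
Your proof is correct and follows essentially the same route as the paper: first observe that inversion is continuous because it restricts to the group inversion on each clopen block $G_\alpha$, then verify joint continuity of the product by pulling back a neighbourhood in the topological group $G_{\alpha\beta}$ through the continuous bonding homomorphisms $\varphi_{\alpha,\alpha\beta}$, $\varphi_{\beta,\alpha\beta}$ (whose continuity, as you note, comes from \cite{Berglund=1973}). The paper's argument is exactly this neighbourhood chase, and your identification of the continuity of the $\varphi$'s as the one nontrivial input matches the paper's implicit reliance on it.
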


\begin{proof}
Since $G_\alpha$ is a topological group for any $\alpha\in E$ and $S$ is a Clifford inverse semigroup, the inversion is continuous in $S$.

Fix arbitrary $a,b\in S$ such that $a\in G_\alpha$ and $b\in G_\beta$ for some $\alpha,\beta\in E$. The assumptions of the lemma imply that $G_{\gamma}$ is an open-and-closed subset of $S$ for any $\gamma\in E$. Since $G_{\alpha\beta}$ is a topological group, for any open neighbourhood  $U\left((a)\varphi_{\alpha,\alpha\beta}(b)\varphi_{\beta,\alpha\beta}\right)\subseteq G_{\alpha\beta}$ of the point $(a)\varphi_{\alpha,\alpha\beta}(b)\varphi_{\beta,\alpha\beta}$ in $S$ there exist open neighbourhoods $V\left((a)\varphi_{\alpha,\alpha\beta}\right)\subseteq G_{\alpha\beta}$ and  $V\left((b)\varphi_{\beta,\alpha\beta}\right)\subseteq G_{\alpha\beta}$ of the points $(a)\varphi_{\alpha,\alpha\beta}$ and $(b)\varphi_{\beta,\alpha\beta}$ in $S$, respectively, such that
\begin{equation*}
  V\left((a)\varphi_{\alpha,\alpha\beta}\right)\cdot V\left((b)\varphi_{\beta,\alpha\beta}\right)\subseteq U\left((a)\varphi_{\alpha,\alpha\beta}(b)\varphi_{\beta,\alpha\beta}\right).
\end{equation*}
Since homomorphisms $\varphi_{\alpha,\alpha\beta}\colon G_\alpha\to G_{\alpha\beta}$ and $\varphi_{\beta,\alpha\beta}\colon G_\beta G_{\alpha\beta}$ are continuous, and $G_{\gamma}$ is an open-and-closed subset of $S$ for any $\gamma\in E$, we have that there exist open neighbourhoods $W\left(a\right)\subseteq G_{\alpha}$ and  $W\left(b\right)\subseteq G_{\beta}$ of the points $a$ and $b$ in $S$, respectively, such that
\begin{equation*}
  (W\left(a\right))\varphi_{\alpha,\alpha\beta}\subseteq V\left((a)\varphi_{\alpha,\alpha\beta}\right) \qquad \hbox{and} \qquad  (W\left(b\right))\varphi_{\beta,\alpha\beta}\subseteq V\left((b)\varphi_{\beta,\alpha\beta}\right).
\end{equation*}
The above inclusions imply that
\begin{equation*}
  W\left(a\right)*W\left(b\right)\subseteq V\left((a)\varphi_{\alpha,\alpha\beta}\right)\cdot V\left((b)\varphi_{\beta,\alpha\beta}\right)\subseteq U\left((a)\varphi_{\alpha,\alpha\beta}(b)\varphi_{\beta,\alpha\beta}\right),
\end{equation*}
and hence, the semigroup operation in $S$ is continuous.
\end{proof}

\begin{proposition}\label{proposition-2.2}
Let $S=\left[E;G_\alpha,\varphi_{\alpha,\beta}\right]$ be a Hausdorff semitopological semigroup which is a finite semilattice of compact groups $G_\alpha$.  Then $S$ is a compact topological inverse semigroup.
\end{proposition}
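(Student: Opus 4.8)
The plan is to reduce the statement to Lemma~\ref{lemma-2.1}: I will show that each $G_\alpha$ is a topological group, that $S$ is the topological sum of the $G_\alpha$, and that $S$ is compact; the conclusion then follows at once.

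First I would verify that, for each $\alpha\in E$, the maximal subgroup $G_\alpha$ with the topology induced from $S$ is a topological group. Since $\alpha\alpha=\alpha$ in $E$, the set $G_\alpha$ is a subsemigroup of $S$ on which the multiplication of $S$ coincides with the group multiplication of $G_\alpha$ (because $\varphi_{\alpha,\alpha}$ is the identity map). Hence separate continuity of the multiplication of $S$ restricts to separate continuity of the multiplication of $G_\alpha$, so $G_\alpha$ is a semitopological group; being compact and Hausdorff, it is a topological group by Ellis' theorem (see, e.g., \cite{Carruth-Hildebrant-Koch=1983,Ruppert=1984}).

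Next I would show that each $G_\alpha$ is an open-and-closed subset of $S$. It is closed, being a compact subset of the Hausdorff space $S$. Since $E$ is finite and $S=\bigcup_{\beta\in E}G_\beta$ with $G_\beta\cap G_\gamma=\varnothing$ whenever $\beta\neq\gamma$, the complement $S\setminus G_\alpha=\bigcup_{\beta\in E\setminus\{\alpha\}}G_\beta$ is a finite union of closed sets and hence closed, so $G_\alpha$ is open in $S$. Thus the pairwise disjoint clopen subsets $\{G_\alpha\colon\alpha\in E\}$ cover $S$, which says precisely that $S$ is the topological sum $\bigoplus_{\alpha\in E}G_\alpha$ of the topological groups $G_\alpha$.

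Now Lemma~\ref{lemma-2.1} applies and yields that $S$ is a topological inverse semigroup; moreover $S=\bigcup_{\alpha\in E}G_\alpha$ is a finite union of compact subspaces, hence compact. This gives that $S$ is a compact topological inverse semigroup. I expect the only non-routine ingredient to be the appeal to Ellis' theorem, which upgrades the separately continuous group multiplications on the $G_\alpha$ to jointly continuous ones and so makes Lemma~\ref{lemma-2.1} applicable; the remaining steps are elementary point-set topology.
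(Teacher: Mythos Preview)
Your proposal is correct and follows essentially the same route as the paper: show each $G_\alpha$ is clopen (using finiteness of $E$ and Hausdorffness), conclude $S$ is the topological sum of the $G_\alpha$, observe $S$ is compact as a finite union of compact sets, and invoke Lemma~\ref{lemma-2.1}. The one place where you are more careful than the paper is in explicitly appealing to Ellis' theorem to upgrade each compact Hausdorff semitopological group $G_\alpha$ to a topological group; the paper's proof simply says ``Next we apply Lemma~\ref{lemma-2.1}'' without making this step explicit, even though the hypothesis of Lemma~\ref{lemma-2.1} requires the $G_\alpha$ to be topological groups.
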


\begin{proof}
Since the semilattice $E$ is finite, $S$ is a compact as the union of finitely many compact subsets $G_\alpha$. Also finiteness of $E$ and Hausdorffness of $S$ imply that $G_\alpha$ is open-and-closed subset of $S$. Next we apply Lemma~\ref{lemma-2.1}.
\end{proof}

\begin{definition}\label{definition-2.3}
Let $\mathfrak{STS}$ be a some class of semitopological semigroups and $(S,\tau_S)\in\mathfrak{STS}$. If $\tau_{\textbf{BR}}$ is a topology on $\textbf{BR}(S,\theta)$ such that $\left(\textbf{BR}(S,\theta),\tau_{\textbf{BR}}\right)\in\mathfrak{STS}$ and for some $i\in \omega$ the subsemigroup $S_{i,i}$ with the topology restricted from $\left(\textbf{BR}(S,\theta),\tau_{\textbf{BR}}\right)$ is topologically isomorphic to $(S,\tau_S)$ under the map $\xi_i\colon S_{i,i}\to S\colon (i,s,i)\mapsto s$, then $\left(\textbf{BR}(S,\theta),\tau_{\textbf{BR}}\right)$ is called a \emph{topological Bruck--Reilly extension} of $(S,\tau_S)$ in the class $\mathfrak{STS}$.
\end{definition}

\begin{proposition}\label{proposition-2.4}
Every Hausdorff semitopological simple inverse $\omega$-semigroup $S$ is topologically isomorphic to a topological Bruck–Reilly extension \emph{$\left(\textbf{BR}(T,\theta),\tau_{\textbf{BR}}\right)$} of a Hausdorff semitopological semigroup $T=\left[E;G_\alpha,\varphi_{\alpha,\beta}\right]$ which is a finite semilattice of semitiopological groups $G_\alpha$ in the class of semitopological semigroups. Moreover, if $S$ is locally compact, then $T$ is a locally compact semitopological semigroup.
\end{proposition}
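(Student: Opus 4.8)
The plan is to combine Kochin's algebraic description of simple inverse $\omega$-semigroups with a transport-of-structure argument, and, for the last assertion, to realise the ``base'' copy of $T$ inside the Bruck--Reilly extension as an \emph{open} subsemigroup.

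First I would apply Kochin's theorem \cite{Kochin=1968}: there is an abstract semigroup isomorphism $\psi\colon S\to\textbf{BR}(T,\theta)$, where $T=\left[E;G_\alpha,\varphi_{\alpha,\beta}\right]$ is a finite chain of groups (in particular a finite semilattice of groups) and $\theta\colon T\to H_T(1)$ is a homomorphism into the group of units. Let $\tau_{\textbf{BR}}$ be the image of the topology of $S$ under $\psi$; then $\left(\textbf{BR}(T,\theta),\tau_{\textbf{BR}}\right)$ is a Hausdorff semitopological semigroup and $\psi$ is a topological isomorphism. The multiplication formula gives $(0,s,0)(0,t,0)=(0,st,0)$, so $T_{0,0}=\{(0,s,0)\colon s\in T\}$ is a subsemigroup of $\textbf{BR}(T,\theta)$ and $\xi_0\colon T_{0,0}\to T$, $(0,s,0)\mapsto s$, is a semigroup isomorphism. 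Endow $T$ with the topology $\tau_T$ that makes $\xi_0$ a homeomorphism, i.e.\ $U\subseteq T$ is open if and only if $(U)\xi_0^{-1}$ is open in the subspace $T_{0,0}$ of $\left(\textbf{BR}(T,\theta),\tau_{\textbf{BR}}\right)$. Then $(T,\tau_T)$ is homeomorphic to $T_{0,0}$, hence Hausdorff; being (homeomorphic to) a subsemigroup of a semitopological semigroup with the induced topology, it is a semitopological semigroup; and each maximal subgroup $G_\alpha$, with the subspace topology inherited from $(T,\tau_T)$, is a group which is a subsemigroup of $T$, hence a semitopological group (and by \cite{Berglund=1973} all the $\varphi_{\alpha,\beta}$ are continuous). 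By construction $T_{0,0}$ with the subspace topology is topologically isomorphic to $(T,\tau_T)$ under $\xi_0$, so, taking $\mathfrak{STS}$ to be the class of all semitopological semigroups, Definition~\ref{definition-2.3} says precisely that $\left(\textbf{BR}(T,\theta),\tau_{\textbf{BR}}\right)$ is a topological Bruck--Reilly extension of $(T,\tau_T)$ in the class of semitopological semigroups. Since $S$ is topologically isomorphic to $\left(\textbf{BR}(T,\theta),\tau_{\textbf{BR}}\right)$, this proves the first assertion.

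For the ``moreover'' part assume $S$, hence $\left(\textbf{BR}(T,\theta),\tau_{\textbf{BR}}\right)$, is locally compact; it suffices to prove that $T_{0,0}$ is open in $\textbf{BR}(T,\theta)$, because an open subspace of a locally compact Hausdorff space is locally compact and $T$ is homeomorphic to $T_{0,0}$. Consider the idempotent $f=(1,1_T,1)$ and the translations $\lambda_f\colon x\mapsto fx$ and $\rho_f\colon x\mapsto xf$, which are continuous since $\textbf{BR}(T,\theta)$ is a semitopological semigroup. As $f$ is idempotent, $f\cdot\textbf{BR}(T,\theta)=\{x\colon fx=x\}$ and $\textbf{BR}(T,\theta)\cdot f=\{x\colon xf=x\}$, so these two sets are the loci where a continuous self-map agrees with the identity and hence are closed in the Hausdorff space $\textbf{BR}(T,\theta)$. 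Splitting into the cases $i\geqslant 1$ and $i=0$ (respectively $j\geqslant 1$ and $j=0$), and using that $(1_T)\theta=1_T$, one checks that $f\cdot\textbf{BR}(T,\theta)=\{(i,s,j)\colon i\geqslant 1\}$ and $\textbf{BR}(T,\theta)\cdot f=\{(i,s,j)\colon j\geqslant 1\}$. Therefore $\{(i,s,j)\colon i=0\}$ and $\{(i,s,j)\colon j=0\}$ are open, and so is their intersection $T_{0,0}$.

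I expect the only place needing a genuine idea to be the ``moreover'' part, specifically the observation that $f\cdot\textbf{BR}(T,\theta)$ and $\textbf{BR}(T,\theta)\cdot f$ are the fixed-point sets of $\lambda_f$ and $\rho_f$ and are therefore closed; the rest is bookkeeping with Kochin's theorem and transport of structure. One should note that the level-$1$ idempotent $f$ exists no matter the size of $E$, so the argument also covers the degenerate case $T=\{1_T\}$, $\textbf{BR}(T,\theta)=\mathscr{C}(p,q)$.
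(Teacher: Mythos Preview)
Your proof is correct and follows essentially the same route as the paper's: invoke Kochin's theorem, transport the topology to $\textbf{BR}(T,\theta)$, equip $T$ with the subspace topology from an embedded copy, and for the ``moreover'' part show that this copy is open by expressing its complement as the union of $f\cdot\textbf{BR}(T,\theta)$ and $\textbf{BR}(T,\theta)\cdot f$ for the idempotent $f=(1,1_T,1)$, these being closed in the Hausdorff space. The only cosmetic differences are that the paper works with $T_{1,1}$ and cites an external result (Proposition~2.4 of \cite{Gutik=2018}) to the effect that $(i,1_S,i)\cdot\textbf{BR}(T,\theta)$ and $\textbf{BR}(T,\theta)\cdot(i,1_S,i)$ are retracts, whereas you use $T_{0,0}$ and give the self-contained equalizer argument $\{x:fx=x\}$; these are equivalent, and in fact your choice of $T_{0,0}$ matches the displayed complement formula more cleanly.
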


\begin{proof}
By Kochin's Theorem (see \cite{Kochin=1968}) every simple inverse $\omega$-semigroup $S$ is (algebraically) isomorphic to the Bruck–Reilly extension of semigroup $T=\left[E;G_\alpha,\varphi_{\alpha,\beta}\right]$ which is a finite semilattice of groups $G_\alpha$. Then $T_{1,1}$ is a submonoid of $\textbf{BR}(T,\theta)$. Let $\tau_1$ be the topology induced from $\left(\textbf{BR}(T,\theta),\tau_{\textbf{BR}}\right)$ onto $T_{1,1}$. By Definition \ref{definition-2.3} the semitopological semigroup $\left(\textbf{BR}(T,\theta),\tau_{\textbf{BR}}\right)$ is a topological Bruck–Reilly extension of the semitopological semigroup $(T_{1,1},\tau_1)$. Moreover, by Proposition 2.4 of \cite{Gutik=2018} for any $i,j\in\omega$ the subsemigroups $T_{i,i}$ and $T_{j,j}$ with the induced from $\left(\textbf{BR}(T,\theta),\tau_{\textbf{BR}}\right)$ topologies  are topologically isomorphic by the mapping $f^{i,i}_{j,j}\colon T_{i,i}\to T_{j,j}$, $x\mapsto(j,1_S,i)\cdot x\cdot(i,1_S,j)$.

Also, Proposition 2.4 of \cite{Gutik=2018} implies that for any $i\in\omega$ the following sets $(i,1_S,i)\cdot \textbf{BR}(T,\theta)$ and $\textbf{BR}(T,\theta)\cdot (i,1_S,i)$ are retracts of $\left(\textbf{BR}(T,\theta),\tau_{\textbf{BR}}\right)$, and hence, by \cite[1.5.C]{Engelking=1989} they are closed subsets in the topological space $\left(\textbf{BR}(T,\theta),\tau_{\textbf{BR}}\right)$. Then
\begin{equation*}
T_{1,1}=\textbf{BR}(T,\theta)\setminus\left((1,1_S,1)\cdot \textbf{BR}(T,\theta)\cup \textbf{BR}(T,\theta)\cdot (1,1_S,1)\right)
\end{equation*}
is an open subset of $\left(\textbf{BR}(T,\theta),\tau_{\textbf{BR}}\right)$. This implies the last statement, because by Theorem 3.3.8 of \cite{Engelking=1989} an open subspace of a locally compact space is locally compact, as well.
\end{proof}

\begin{definition}\label{definition-2.5}
Let $\mathscr{B}_S$ be a base of the topology $\tau_S$ on a semitopological semigroup $S$. The topology $\tau_{\textbf{BR}}^{\oplus}$ on $\textbf{BR}(S,\theta)$ generated by the base $\mathscr{B}_{\textbf{BR}}^{\oplus}=\left\{U_{i,j}\colon U\in\mathscr{B}_S, \; i,j\in\omega\right\}$ is called a \emph{sum direct topology} on $\textbf{BR}(S,\theta)$.
\end{definition}

The following statement is proved in \cite{Gutik=1994} and \cite{Gutik-Pavlyk=2009}.

\begin{proposition}\label{proposition-2.6}
Let $(S,\tau_S)$ be a semitopological semigroup $S$. Then \emph{$\left(\textbf{BR}(S,\theta),\tau_{\textbf{BR}}^{\oplus}\right)$} is a semitopological semigroup, i.e., \emph{$\left(\textbf{BR}(S,\theta),\tau_{\textbf{BR}}^{\oplus}\right)$} is a topological Bruck--Reilly extension of $(S,\tau_S)$ in the class of semitopological semigroups. Moreover, if $(S,\tau_S)$ satisfies one of the following conditions: it is metrizable, Hausdorff, a semitopological semigroup with the continuous inversion, a topological semigroup, a topological inverse semigroup, then so is \emph{$\left(\textbf{BR}(S,\theta),\tau_{\textbf{BR}}^{\oplus}\right)$}, and \emph{$\left(\textbf{BR}(S,\theta),\tau_{\textbf{BR}}^{\oplus}\right)$} is a topological Bruck--Reilly extension of $(S,\tau_S)$ in the corresponding class of semitopological semigroups.
\end{proposition}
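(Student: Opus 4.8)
The plan is to identify $\left(\textbf{BR}(S,\theta),\tau_{\textbf{BR}}^{\oplus}\right)$ with the topological sum of countably many pairwise-disjoint clopen copies of $(S,\tau_S)$, indexed by $\omega\times\omega$, and then to reduce every continuity assertion to the corresponding assertion about a single copy, where it follows from the hypotheses on $(S,\tau_S)$ together with continuity of the homomorphism $\theta$ (hence of each iterate $\theta^n$), which is part of the standing data of a topological Bruck--Reilly extension.

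First I would settle the topological bookkeeping. Because $S=\bigcup\mathscr{B}_S$, each block $S_{i,j}=\bigcup_{U\in\mathscr{B}_S}U_{i,j}$ is open in $\tau_{\textbf{BR}}^{\oplus}$; it is also closed, since its complement $\bigcup_{(k,l)\neq(i,j)}S_{k,l}$ is open; and the subspace topology induced by $\tau_{\textbf{BR}}^{\oplus}$ on $S_{i,j}$ has $\{U_{i,j}\colon U\in\mathscr{B}_S\}$ as a base, so $\xi_{i,j}\colon S_{i,j}\to S$, $(i,s,j)\mapsto s$, is a homeomorphism. Thus $\left(\textbf{BR}(S,\theta),\tau_{\textbf{BR}}^{\oplus}\right)$ is the topological sum of the spaces $S_{i,j}$, each homeomorphic to $(S,\tau_S)$; in particular, for every $i\in\omega$ the map $\xi_i=\xi_{i,i}$ is at once the algebraic isomorphism $S_{i,i}\to S$, $(i,s,i)\mapsto s$, and a homeomorphism, hence a topological isomorphism. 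Consequently, once separate continuity of the multiplication is established, Definition~\ref{definition-2.3} will be verified and $\left(\textbf{BR}(S,\theta),\tau_{\textbf{BR}}^{\oplus}\right)$ will be a topological Bruck--Reilly extension of $(S,\tau_S)$ in the class of semitopological semigroups; the analogous conclusion in each narrower class will follow once the corresponding stronger property of $\left(\textbf{BR}(S,\theta),\tau_{\textbf{BR}}^{\oplus}\right)$ is checked.

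Next I would prove separate continuity. Fix $a=(k,t,l)$. Since the domain is a topological sum, the right translation $\rho_a\colon x\mapsto x\cdot a$ is continuous if and only if each restriction $\rho_a|_{S_{i,j}}$ is continuous; and on $S_{i,j}$ the indices $i,j$ are fixed, so in the product formula exactly one of the branches $j\geqslant k$ and $j<k$ occurs throughout the block. In the first case $\rho_a$ maps $S_{i,j}$ into the single block $S_{i,\,j+l-k}$ and, read through the $\xi$'s, is the map $s\mapsto s\cdot(t)\theta^{j-k}$, a right translation of $S$ by a fixed element; in the second case it maps $S_{i,j}$ into $S_{i+k-j,\,l}$ and is the map $s\mapsto\bigl((s)\theta^{k-j}\bigr)\cdot t$, the composite of $\theta^{k-j}$ with a right translation of $S$. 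Both are continuous --- the first because $(S,\tau_S)$ is semitopological, the second using in addition continuity of $\theta^{k-j}$. The argument for $\lambda_a\colon x\mapsto a\cdot x$ is symmetric, the induced maps on a block being $s\mapsto(t)\theta^{i-l}\cdot s$ when $i\geqslant l$ and $s\mapsto t\cdot\bigl((s)\theta^{l-i}\bigr)$ when $i<l$. Hence $\left(\textbf{BR}(S,\theta),\tau_{\textbf{BR}}^{\oplus}\right)$ is a semitopological semigroup.

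Finally I would transfer the remaining properties blockwise. Hausdorffness and metrizability are inherited by a (countable) topological sum by standard properties of topological sums. For continuity of the inversion one uses that in a Bruck--Reilly extension of an inverse semigroup $(i,s,j)^{-1}=(j,s^{-1},i)$, so the inversion carries $S_{i,j}$ onto $S_{j,i}$ and is, through the $\xi$'s, the inversion $s\mapsto s^{-1}$ of $S$; it is therefore continuous whenever that of $(S,\tau_S)$ is. For joint continuity of the multiplication one uses that the product of two topological sums is the topological sum of the pairwise products (each $S_{i,j}\times S_{k,l}$ being clopen in the product), so it suffices to check the restriction of the multiplication to each $S_{i,j}\times S_{k,l}$; there the induced map $S\times S\to S$ is $(s,r)\mapsto s\cdot(r)\theta^{j-k}$ or $(s,r)\mapsto\bigl((s)\theta^{k-j}\bigr)\cdot r$, i.e.\ the multiplication of $S$ precomposed with $\mathrm{id}\times\theta^{j-k}$ or with $\theta^{k-j}\times\mathrm{id}$, hence continuous when $S$ is a topological semigroup; intersecting this with the inversion case yields the topological inverse semigroup case. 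The only delicate point, and the one I would watch most carefully, is the bookkeeping with $\min\{\cdot,\cdot\}$ in the product formula --- the crucial observation being that restricting to a single block pins down the branch of the formula, which is exactly what legitimizes all the reductions above --- together with the use of continuity of $\theta$: this is essential, since, for instance, $\rho_{(1,1_S,0)}$ restricted to $S_{0,0}$ is the map $s\mapsto(s)\theta$.
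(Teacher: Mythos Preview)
The paper does not actually prove Proposition~\ref{proposition-2.6}: it merely records the statement and cites \cite{Gutik=1994} and \cite{Gutik-Pavlyk=2009} for the proof. Your blockwise strategy --- identifying $\tau_{\textbf{BR}}^{\oplus}$ with the topological sum $\bigoplus_{i,j}S_{i,j}$, observing that each $\xi_{i,j}$ is a homeomorphism, and then reducing every continuity check to a single block where the induced self-map of $S$ is a translation possibly precomposed with an iterate of~$\theta$ --- is exactly the natural argument, and your case analysis of the induced block maps is correct. That this is the intended method is confirmed by the paper's own proof of Theorem~\ref{theorem-2.8}, which carries out precisely the same block-by-block verification for joint continuity and inversion.

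One point deserves sharpening. You rightly identify continuity of~$\theta$ as essential --- your own observation that $\rho_{(1,1_S,0)}|_{S_{0,0}}$ is $\theta$ shows it is even \emph{necessary} for separate continuity --- but describing it as ``part of the standing data of a topological Bruck--Reilly extension'' slightly overstates what the paper records. Definition~\ref{definition-2.3} does not postulate continuity of~$\theta$; rather, once a shift-continuous topology on $\textbf{BR}(S,\theta)$ is in hand, continuity of~$\theta$ \emph{follows} (this is how the paper uses Proposition~2.4 of \cite{Gutik=2018} in the proof of Theorem~\ref{theorem-2.8}). For Proposition~\ref{proposition-2.6} one is going in the other direction, so continuity of~$\theta$ must be taken as a hypothesis --- as the paper does explicitly in Examples~\ref{example-3.9} and~\ref{example-3.10}, and as the cited sources presumably do. Your proof is correct under that (necessary) assumption; it would be worth stating it explicitly rather than folding it into ``standing data''.
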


Theorem 8 of \cite{Gutik-Pavlyk=2009} implies the following

\begin{corollary}\label{corollary-2.7}
Let $(S,\tau_S)$ be a Hausdorff compact semitopological semigroup $S$. If \emph{$\left(\textbf{BR}(S,\theta),\tau_{\textbf{BR}}\right)$} is a topological Bruck--Reilly extension of $(S,\tau_S)$ in the class of Hausdorff semitopological semigroups, then \emph{$\tau_{\textbf{BR}}$} coincides with the sum direct topology \emph{$\tau_{\textbf{BR}}^{\oplus}$} on \emph{$\textbf{BR}(S,\theta)$}.
\end{corollary}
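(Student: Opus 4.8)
Before seeing the authors' argument, here is how I would approach Corollary~\ref{corollary-2.7}.

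The statement to be proved is a uniqueness assertion: by Proposition~\ref{proposition-2.6} the sum direct topology $\tau_{\textbf{BR}}^{\oplus}$ is always one admissible choice for $\tau_{\textbf{BR}}$, so the real content is that no other topology does the job. The plan is to reduce this to Theorem~8 of \cite{Gutik-Pavlyk=2009}, which characterises the Hausdorff semitopological semigroup topologies on the Bruck--Reilly extension of a compact semitopological semigroup in terms of the topology induced on the bottom copy of $S$. Accordingly, the preliminary goal is to exhibit $S_{0,0}$ (under $\xi_0$) as a compact Hausdorff semitopological semigroup, topologically isomorphic to $(S,\tau_S)$, sitting inside $\left(\textbf{BR}(S,\theta),\tau_{\textbf{BR}}\right)$ as an open-and-closed subsemigroup.

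To reach that goal I would first invoke Definition~\ref{definition-2.3}: it supplies an index $i\in\omega$ for which $\xi_i\colon\left(S_{i,i},\tau_{\textbf{BR}}|_{S_{i,i}}\right)\to(S,\tau_S)$ is a topological isomorphism, so $S_{i,i}$ is compact. As in the proof of Proposition~\ref{proposition-2.4}, for every pair $j,k\in\omega$ the maps $f^{j,j}_{k,k}\colon x\mapsto(k,1_S,j)\cdot x\cdot(j,1_S,k)$ are mutually inverse homeomorphisms between $S_{j,j}$ and $S_{k,k}$, being compositions of one-sided translations in the semitopological semigroup $\left(\textbf{BR}(S,\theta),\tau_{\textbf{BR}}\right)$; hence every diagonal subsemigroup, and in particular $S_{0,0}$, is a compact Hausdorff semitopological semigroup topologically isomorphic to $(S,\tau_S)$. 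To see that $S_{0,0}$ is open-and-closed I would repeat the retract argument from the proof of Proposition~\ref{proposition-2.4}: the left and right translations $x\mapsto(1,1_S,1)\cdot x$ and $x\mapsto x\cdot(1,1_S,1)$ are continuous retractions of $\textbf{BR}(S,\theta)$ onto $(1,1_S,1)\cdot\textbf{BR}(S,\theta)$ and onto $\textbf{BR}(S,\theta)\cdot(1,1_S,1)$, so by \cite[1.5.C]{Engelking=1989} these two sets are closed, whence
\begin{equation*}
S_{0,0}=\textbf{BR}(S,\theta)\setminus\left((1,1_S,1)\cdot\textbf{BR}(S,\theta)\cup\textbf{BR}(S,\theta)\cdot(1,1_S,1)\right)
\end{equation*}
is open; being compact in the Hausdorff space $\textbf{BR}(S,\theta)$ it is also closed.

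Finally I would feed this configuration into Theorem~8 of \cite{Gutik-Pavlyk=2009}: the topology $\tau_{\textbf{BR}}$ makes $\textbf{BR}(S,\theta)$ a Hausdorff semitopological semigroup in which the compact subsemigroup $S_{0,0}$ is open-and-closed and carries the topology transported from $\tau_S$ by $\xi_0$, and that theorem then forces $\tau_{\textbf{BR}}=\tau_{\textbf{BR}}^{\oplus}$. I expect the only delicate point to be the bookkeeping needed to match the hypotheses of Theorem~8 precisely — confirming that the way $(S,\tau_S)$ embeds as the bottom copy in a topological Bruck--Reilly extension (Definition~\ref{definition-2.3}) is exactly the configuration to which that theorem applies, and that compactness of $S$ is what upgrades the conclusion from a comparison of topologies to an equality.
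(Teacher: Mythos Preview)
Your proposal is correct and follows the same route as the paper: the paper simply records that Corollary~\ref{corollary-2.7} is implied by Theorem~8 of \cite{Gutik-Pavlyk=2009}, without giving any further argument. Your write-up is a more detailed version of the same reduction, making explicit why the hypotheses of that theorem are met (compactness and open-and-closedness of the diagonal copy of $S$ via the retract argument of Proposition~\ref{proposition-2.4}); this extra bookkeeping is useful but not a different approach.
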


\begin{theorem}\label{theorem-2.8}
Let $T$ be a compact Hausdorff topological semigroup and \emph{$\left(\textbf{BR}(T,\theta),\tau_{\textbf{BR}}\right)$} be a topological Bruck--Reilly extension of $T$ in the class of Hausdorff semitopological semigroups. Then \emph{$\left(\textbf{BR}(T,\theta),\tau_{\textbf{BR}}\right)$} is a Hausdorff topological semigroup. Moreover, if $T$ is a topological inverse semigroup, then so is \emph{$\left(\textbf{BR}(T,\theta),\tau_{\textbf{BR}}\right)$}.
\end{theorem}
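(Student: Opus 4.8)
The plan is to reduce everything to the two facts already recorded above: Corollary~\ref{corollary-2.7}, which pins down the topology $\tau_{\textbf{BR}}$, and Proposition~\ref{proposition-2.6}, which transfers good structural properties from $(T,\tau_T)$ to the sum direct extension. First I would observe that a compact Hausdorff topological semigroup is in particular a Hausdorff compact semitopological semigroup. Hence $(T,\tau_T)$, together with the given topological Bruck--Reilly extension $(\textbf{BR}(T,\theta),\tau_{\textbf{BR}})$ --- which by hypothesis is a topological Bruck--Reilly extension of $T$ in the class of Hausdorff semitopological semigroups --- satisfies exactly the assumptions of Corollary~\ref{corollary-2.7}. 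That corollary then yields $\tau_{\textbf{BR}}=\tau_{\textbf{BR}}^{\oplus}$, the sum direct topology on $\textbf{BR}(T,\theta)$ in the sense of Definition~\ref{definition-2.5}.

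Next I would apply Proposition~\ref{proposition-2.6} with $(S,\tau_S)=(T,\tau_T)$. Since $T$ is a Hausdorff topological semigroup, the relevant clauses of Proposition~\ref{proposition-2.6} give that $(\textbf{BR}(T,\theta),\tau_{\textbf{BR}}^{\oplus})$ is a Hausdorff topological semigroup; because $\tau_{\textbf{BR}}=\tau_{\textbf{BR}}^{\oplus}$, this is precisely the first assertion. For the ``moreover'' part I would again invoke Proposition~\ref{proposition-2.6}: if in addition $(T,\tau_T)$ is a topological inverse semigroup, then $(\textbf{BR}(T,\theta),\tau_{\textbf{BR}}^{\oplus})$ is a topological inverse semigroup, and the identification $\tau_{\textbf{BR}}=\tau_{\textbf{BR}}^{\oplus}$ finishes the proof.

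I do not expect a genuine obstacle here: all the real work sits inside Corollary~\ref{corollary-2.7} (which rests on Theorem~8 of \cite{Gutik-Pavlyk=2009}) and Proposition~\ref{proposition-2.6} (proved in \cite{Gutik=1994, Gutik-Pavlyk=2009}). The one point requiring care is that the hypotheses dovetail --- namely that ``compact Hausdorff topological semigroup'' entails ``Hausdorff compact semitopological semigroup'', so that Corollary~\ref{corollary-2.7} applies, and that the notion of topological Bruck--Reilly extension in the class of Hausdorff semitopological semigroups used in the theorem agrees with the one used in that corollary; both checks are immediate. If a self-contained argument were wanted instead, one would take a base $\mathscr{B}_T$ of $\tau_T$, form $\mathscr{B}_{\textbf{BR}}^{\oplus}=\{U_{i,j}\colon U\in\mathscr{B}_T,\ i,j\in\omega\}$, and verify joint continuity of the Bruck--Reilly multiplication (and, in the inverse case, continuity of inversion) by a finite case analysis on the value of $\min\{j,k\}$ in the product formula, using that each block $T_{i,j}$ is clopen and homeomorphic to $T$, that $T$ is a (topological inverse) semigroup, and that the connecting homomorphism $\theta$ is continuous; but this merely reproves Proposition~\ref{proposition-2.6} and is unnecessary.
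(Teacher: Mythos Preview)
Your proposal is correct, and in fact more economical than the paper's own argument. Both proofs begin the same way, invoking Corollary~\ref{corollary-2.7} to identify $\tau_{\textbf{BR}}$ with the sum direct topology $\tau_{\textbf{BR}}^{\oplus}$. From there the paper does \emph{not} cite Proposition~\ref{proposition-2.6}; instead it carries out by hand exactly the three-case verification ($j<k$, $j=k$, $j>k$) that you sketch in your final paragraph, together with the direct check that $(i,s,j)^{-1}=(j,s^{-1},i)$ gives continuous inversion. In the course of this the paper appeals to Proposition~2.4 of \cite{Gutik=2018} to secure continuity of $\theta\colon T\to H(1_T)$, which is needed in the two asymmetric cases.

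So the only substantive difference is packaging: you delegate the block-by-block continuity check to Proposition~\ref{proposition-2.6}, while the paper reproves that special case explicitly. Your route is shorter and entirely legitimate given that Proposition~\ref{proposition-2.6} is already on record. The one small point worth making explicit in your write-up is that the continuity of $\theta$---implicitly needed for the ``topological semigroup'' clause of Proposition~\ref{proposition-2.6}---is guaranteed here because $(\textbf{BR}(T,\theta),\tau_{\textbf{BR}})$ is assumed semitopological (this is Proposition~2.4 of \cite{Gutik=2018}, which the paper invokes at the same juncture). With that remark added, your argument is complete.
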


\begin{proof}
By Corollary~\ref{corollary-2.7}, $\tau_{\textbf{BR}}$ coincides with the sum direct topology $\tau_{\textbf{BR}}^{\oplus}$ on $\textbf{BR}(T,\theta)$.

Fix arbitrary $(i,s,j),(k,t,l)\in \textbf{BR}(T,\theta)$. Then we have that
\begin{equation*}
  (i,s,j)\cdot (k,t,l)=
  \left\{
    \begin{array}{ll}
      (i-j+k,(s)\theta^{k-j}\cdot t,l), & \hbox{if~} j<k;\\
      (i,s\cdot t,l),                   & \hbox{if~} j=k;\\
      (i,s\cdot (t)\theta^{j-k},j-k+l), & \hbox{if~} j>k.
    \end{array}
  \right.
\end{equation*}

Next we consider the following cases.

(1) Suppose that $j<k$. Then for any open neighbourhood $U((s)\theta^{k-j}\cdot t)$ of the point $(s)\theta^{k-j}\cdot t$ in $T$ there exist open neighbourhoods $V((s)\theta^{k-j})$ and $V(t)$ of the points $(s)\theta^{k-j}$ and $t$ in $T$, respectively, such that $V((s)\theta^{k-j})\cdot V(t)\subseteq U((s)\theta^{k-j}\cdot t)$, because $T$ is a topological semigroup. By Proposition 2.4 of \cite{Gutik=2018} the homomorphism $\theta\colon T\to H(1_T)$ is continuous, and hence there exists an open neighbourhood $O(s)$ of the point $s$ in $T$ such that $(O(s))\theta^{k-j}\subseteq V((s)\theta^{k-j})$. Since $j<k$, $O(s)_{i,j}\subseteq T_{i,j}$, $V(t)_{k,l}\subseteq T_{k,l}$, and $U((s)\theta^{k-j}\cdot t)_{i-j+k,l}\subseteq T_{i-j+k,l}$, the semigroup operation in $\textbf{BR}(T,\theta)$ implies that
\begin{equation*}
O(s)_{i,j}\cdot V(t)_{k,l}\subseteq U((s)\theta^{k-j}\cdot t)_{i-j+k,l}.
\end{equation*}

(2) Suppose that $j=k$. Since $T$ is a topological semigroup, for any open neighbourhood $U(s\cdot t)$ of the point $s\cdot t$ in the space $T$ there exist open neighbourhoods $V(s)$ and $V(t)$ of the points $s$ and $t$ in $T$, respectively, such that $V(s)\cdot V(t)\subseteq U(s\cdot t)$. Since $j=k$, $V(s)_{i,j}\subseteq T_{i,j}$, $V(t)_{k,l}\subseteq T_{k,l}$, and $U(s\cdot t)_{i,l}\subseteq T_{i,l}$, by the semigroup operation of $\textbf{BR}(T,\theta)$ we obtain that
\begin{equation*}
  V(s)_{i,j}\cdot V(t)_{k,l}\subseteq U(s\cdot t)_{i,l}.
\end{equation*}

(3) Suppose that $j>k$. Since $T$ is a topological semigroup, for any open neighbourhood $U(s\cdot (t)\theta^{j-k})$ of the point $s\cdot (t)\theta^{j-k}$ in the space $T$ there exist open neighbourhoods $V(s)$ and $V((t)\theta^{j-k})$ of the points $s$ and $(t)\theta^{j-k}$ in $T$, respectively, such that $V(s)\cdot  V((t)\theta^{j-k})\subseteq U(s\cdot (t)\theta^{j-k})$. By Proposition 2.4 of \cite{Gutik=2018} the homomorphism $\theta\colon T\to H(1_T)$ is continuous and hence, there exists an open neighbourhood $O(t)$ of the point $t$ in the topological space $T$ such that $(O(t))\theta^{j-k}\subseteq V((t)\theta^{j-k})$. Since $j>k$, $V(s)_{i,j}\subseteq T_{i,j}$, $O(t)_{k,l}\subseteq T_{k,l}$, and $U(s\cdot (t)\theta^{j-k})_{i,j-k+l}\subseteq T_{i,j-k+l}$,  by the semigroup operation of $\textbf{BR}(T,\theta)$ we get that
\begin{equation*}
  V(s)_{i,j}\cdot O(t)_{k,l}\subseteq U(s\cdot (t)\theta^{j-k})_{i,j-k+l}.
\end{equation*}

The above three cases imply that the semigroup operation is continuous in $\left(\textbf{BR}(T,\theta),\tau_{\textbf{BR}}\right)$.

If $T$ is an inverse semigroup, then $(i,s,j)^{-1}=(j,s^{-1},i)$ for any $(i,s,j)\in \textbf{BR}(T,\theta)$.  Since $T$ is an inverse topological semigroup, for any open neighbourhood $U(s^{-1})$ of the point $s^{-1}$ in $T$ there exists an open neighbourhood $V(s)$ of $s$ in $T$ such that $\left(V(s)\right)^{-1}\subseteq U(s^{-1})$. Since $V(s)_{i,j}\subseteq T_{i,j}$ and $U(s^{-1})_{j,i}\subseteq T_{j,i}$, the semigroup operation in $\textbf{BR}(T,\theta)$ implies that $(V(s)_{i,j})^{-1}\subseteq U(s^{-1})_{j,i}$, and hence, the inversion is continuous in $\left(\textbf{BR}(T,\theta),\tau_{\textbf{BR}}\right)$.
\end{proof}

The main result of this section is the following theorem.

\begin{theorem}\label{theorem-2.9}
Let $S$ be a Hausdorff semitopological simple inverse $\omega$-semigroup such that every maximal subgroup of $S$ is compact. Then $S$ is topologically isomorphic to the Bruck--Reilly extension \emph{$\left(\textbf{BR}(T,\theta),\tau_{\textbf{BR}}^{\oplus}\right)$} of a finite semilattice $T=\left[E;G_\alpha,\varphi_{\alpha,\beta}\right]$ of compact groups $G_\alpha$ in the class of topological inverse semigroups. Moreover, the space of $S$ is locally compact.
\end{theorem}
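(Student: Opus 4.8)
The plan is to bootstrap the assertion from the structure results already established, namely Proposition~\ref{proposition-2.4}, Proposition~\ref{proposition-2.2}, Corollary~\ref{corollary-2.7} and Theorem~\ref{theorem-2.8}; the only genuinely new ingredient is to convert the compactness hypothesis on the maximal subgroups of $S$ into compactness of the structure groups $G_\alpha$ of the underlying Clifford semigroup.

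First I would apply Proposition~\ref{proposition-2.4}: $S$ is topologically isomorphic to a topological Bruck--Reilly extension $\left(\textbf{BR}(T,\theta),\tau_{\textbf{BR}}\right)$ of a Hausdorff semitopological semigroup $T=\left[E;G_\alpha,\varphi_{\alpha,\beta}\right]$ which is a finite semilattice of semitopological groups $G_\alpha$, in the class of semitopological semigroups. Identify $S$ with $\left(\textbf{BR}(T,\theta),\tau_{\textbf{BR}}\right)$ along this isomorphism. The maximal subgroups of $\textbf{BR}(T,\theta)$ are precisely the sets $(G_\alpha)_{i,i}=\{(i,g,i)\colon g\in G_\alpha\}$ for $i\in\omega$, $\alpha\in E$ (these sit at the idempotents $(i,1_{G_\alpha},i)$, which exhaust $E(\textbf{BR}(T,\theta))$); hence every $(G_\alpha)_{i,i}$, being a maximal subgroup of $S$ under the identification, is compact by hypothesis. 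By Definition~\ref{definition-2.3} the map $\xi_1\colon T_{1,1}\to T$, $(1,s,1)\mapsto s$, is a topological isomorphism, and it carries $(G_\alpha)_{1,1}$ homeomorphically onto $G_\alpha$ with the subspace topology from $T$. Thus each $G_\alpha$ is compact; since $T$ is Hausdorff and $G_\alpha$, with the induced topology, is a subgroup of the semitopological semigroup $T$, it is a compact Hausdorff semitopological group, and therefore a topological group by Ellis' theorem. Moreover, $T$ being a semitopological Clifford semigroup, all connecting homomorphisms $\varphi_{\alpha,\beta}$ are continuous.

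Now $T=\left[E;G_\alpha,\varphi_{\alpha,\beta}\right]$ is a Hausdorff semitopological semigroup which is a finite semilattice of compact groups $G_\alpha$, so Proposition~\ref{proposition-2.2} yields that $T$ is a compact Hausdorff topological inverse semigroup. Since $\left(\textbf{BR}(T,\theta),\tau_{\textbf{BR}}\right)$ is a topological Bruck--Reilly extension of the compact Hausdorff semitopological semigroup $T$ in the class of Hausdorff semitopological semigroups, Corollary~\ref{corollary-2.7} forces $\tau_{\textbf{BR}}=\tau_{\textbf{BR}}^{\oplus}$, and then Theorem~\ref{theorem-2.8}, applied to the compact Hausdorff topological inverse semigroup $T$, shows that $\left(\textbf{BR}(T,\theta),\tau_{\textbf{BR}}^{\oplus}\right)$ is a Hausdorff topological inverse semigroup. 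Consequently $S$ is topologically isomorphic to $\left(\textbf{BR}(T,\theta),\tau_{\textbf{BR}}^{\oplus}\right)$, a topological Bruck--Reilly extension of the finite semilattice of compact groups $T$ in the class of topological inverse semigroups, which is the first assertion.

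For the local compactness statement I would observe that in the sum direct topology each block $T_{i,j}$ is open-and-closed in $\textbf{BR}(T,\theta)$: it is a union of basic open sets $U_{i,j}$, and its complement $\bigcup_{(k,l)\neq(i,j)}T_{k,l}$ is likewise a union of basic open sets. Moreover, restricting $\tau_{\textbf{BR}}^{\oplus}$ to $T_{i,j}$ leaves precisely the sets $U_{i,j}$ with $U$ open in $T$, so the bijection $(i,s,j)\mapsto s$ is a homeomorphism of $T_{i,j}$ onto the compact space $T$. Hence every point of $\textbf{BR}(T,\theta)$ has a compact open neighbourhood, so $\left(\textbf{BR}(T,\theta),\tau_{\textbf{BR}}^{\oplus}\right)$, and therefore $S$, is locally compact. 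I expect the only delicate point to be the bookkeeping of the second paragraph — matching the maximal subgroups of $\textbf{BR}(T,\theta)$ with the $G_\alpha$ and transporting their compactness (and hence, via Ellis' theorem, their topological-group structure) through $\xi_1$; once that is settled, the theorem is essentially an assembly of the results recalled above.
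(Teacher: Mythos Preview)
Your proof is correct and follows essentially the same route as the paper, which simply cites Proposition~\ref{proposition-2.4} and Theorem~\ref{theorem-2.8} for the first assertion and Theorem~3.3.12 of Engelking for local compactness. You have made explicit the bridge the paper leaves to the reader --- that compactness of the maximal subgroups forces each $G_\alpha$ to be compact, so that Proposition~\ref{proposition-2.2} (with Ellis' theorem in the background) renders $T$ a compact topological inverse semigroup and Theorem~\ref{theorem-2.8} applies --- and you replace the Engelking citation by a direct check that each block $T_{i,j}$ is compact and open in the sum direct topology, which is exactly the content of that reference here.
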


\begin{proof}
The first statement of the theorem follows from Proposition~\ref{proposition-2.4} and Theorem~\ref{theorem-2.8}. Theorem~3.3.12 of \cite{Engelking=1989} implies the second statement of the theorem.
\end{proof}

The following example shows that the statement of Theorem~\ref{theorem-2.9} is not true when a Hausdorff locally compact semitopological simple inverse $\omega$-semigroup $S$ contains non-compact maximal subgroup.

\begin{example}[\!{\cite[Example~4.7]{Gutik=2018}}]\label{example-2.10}
Let $\mathbb{Z}(+)$ be the additive group of integers and $0_{\mathbb{Z}}$ be the neutral element of  $\mathbb{Z}(+)$.
We define a topology $\tau_\mathrm{cf}$ on $\mathbf{BR}(\mathbb{Z}(+),\theta)$ in the following way. Let $(i,g,j)$ be an isolated point of $(\mathbf{BR}(\mathbb{Z}(+),\theta),\tau_\mathrm{cf})$ in the following cases:
\begin{enumerate}
  \item[$(i)$] $g\neq 0_{\mathbb{Z}}$ and $i, j\in\omega$;
  \item[$(ii)$] $i=0$ or $j=0$.
\end{enumerate}
The family
\begin{equation*}
  \mathscr{B}_{cf}(i,0_{\mathbb{Z}},j)=\left\{(UF)^0_{i-1,j-1}=(\mathbb{Z}(+)\setminus F)_{i-1,j-1}\cup \{(i,0_{\mathbb{Z}},j)\}\colon F \hbox{~is a fnite subset of~} \mathbb{Z}(+)\right\}
\end{equation*}
is a base of the topology $\tau_\mathrm{cf}$ on $\mathbf{BR}(\mathbb{Z}(+),\theta)$ at the point $(i,0_{\mathbb{Z}},j)$, for all $i,j\in\omega$. Then $(\mathbf{BR}(\mathbb{Z}(+),\theta),\tau_\mathrm{cf})$ is a Hausdorff locally compact semitopological inverse semigroup with continuous inversion.
\end{example}

\section{On adjoining zero to a simple inverse locally compact semitopological $\omega$-semigroup with compact maximal subgroups}\label{section-3}

Later in this section by $\textbf{BR}^{\mathbf{0}}(S,\theta)$ we denote the Bruck--Reilly semigroup $\textbf{BR}(S,\theta)$ with an adjoined zero $\mathbf{0}$ (see \cite[Section 1.1]{Clifford-Preston-1961}).

\begin{proposition}\label{proposition-3.1}
Let \emph{$\tau_{\textbf{BR}}^{\mathbf{0}}$} be a Hausdorff topology on \emph{$\textbf{BR}^{\mathbf{0}}(S,\theta)$} such that the set $S_{i,j}$ is open in \emph{$\left(\textbf{BR}^{\mathbf{0}}(S,\theta),\tau_{\textbf{BR}}^{\mathbf{0}}\right)$} for all $i,j\in \omega$. Then $\eta^\natural$ is a closed congruence on \emph{$\left(\textbf{BR}^{\mathbf{0}}(S,\theta),\tau_{\textbf{BR}}^{\mathbf{0}}\right)$}.
\end{proposition}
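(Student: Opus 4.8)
The plan is to show that $\eta^\natural$ is a closed subset of the product space $\bigl(\textbf{BR}^{\mathbf{0}}(S,\theta),\tau_{\textbf{BR}}^{\mathbf{0}}\bigr)\times\bigl(\textbf{BR}^{\mathbf{0}}(S,\theta),\tau_{\textbf{BR}}^{\mathbf{0}}\bigr)$, and that it is indeed a congruence. First recall that by Proposition~\ref{proposition-1.3} the relation $\eta^\natural$ on $\textbf{BR}(S,\theta)$ is a congruence; we extend it to $\textbf{BR}^{\mathbf{0}}(S,\theta)$ by declaring $\mathbf{0}$ a one-element class, so that $\eta^\natural=\bigl(\bigcup_{i,j\in\omega}(S_{i,j}\times S_{i,j})\bigr)\cup\{(\mathbf{0},\mathbf{0})\}$; that this remains a congruence is immediate since $\mathbf{0}$ is absorbing. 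It then remains only to verify that this set is closed.

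The key observation is that the complement of $\eta^\natural$ splits into pieces that are each manifestly open. Fix $(x,y)\notin\eta^\natural$. If both $x,y\in\textbf{BR}(S,\theta)$, say $x\in S_{i,j}$ and $y\in S_{k,l}$ with $(i,j)\neq(k,l)$, then $S_{i,j}\times S_{k,l}$ is an open neighbourhood of $(x,y)$ by hypothesis, and it misses $\eta^\natural$ since any pair in $\eta^\natural\setminus\{(\mathbf{0},\mathbf{0})\}$ has both coordinates in the same block $S_{m,n}$ and the blocks are pairwise disjoint. If exactly one coordinate is $\mathbf{0}$, say $x=\mathbf{0}$ and $y\in S_{k,l}$, then we need an open neighbourhood of $\mathbf{0}$ missing $S_{k,l}$; this is where the hypothesis that each $S_{i,j}$ is open is used again, together with Hausdorffness — I would argue that the set $\textbf{BR}^{\mathbf{0}}(S,\theta)\setminus\operatorname{cl}(S_{k,l})$ is an open neighbourhood of $\mathbf{0}$ (one must check $\mathbf{0}\notin\operatorname{cl}(S_{k,l})$, which follows because $S_{k,l}$ being open means $\mathbf{0}$ has a neighbourhood disjoint from $S_{k,l}$ — indeed $\textbf{BR}(S,\theta)=\bigcup S_{i,j}$ is open, so $\{\mathbf{0}\}$ is closed but that is not quite enough; rather, pick any neighbourhood of $\mathbf{0}$ and intersect). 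Actually the cleanest route: since every $S_{i,j}$ is open, $\textbf{BR}(S,\theta)$ is open, hence $\{\mathbf{0}\}$ is closed; and since the topology is Hausdorff, $\mathbf{0}$ and the point $y$ have disjoint open neighbourhoods, but I need the neighbourhood of $\mathbf{0}$ to avoid the whole set $S_{k,l}\times\{\text{stuff}\}$ — so instead take $W=\textbf{BR}^{\mathbf{0}}(S,\theta)\setminus S_{k,l}$, which is open because $S_{k,l}$ is closed (being open with open complement? no). Let me restructure this in the proof: the correct statement is that $S_{k,l}$, being open, has the property that $\operatorname{cl}(S_{k,l})\subseteq S_{k,l}\cup\{\mathbf{0}\}$ is false in general, so I will instead exhibit the neighbourhood directly from openness of the $S_{m,n}$'s for all $m,n$ by taking $W=\bigl(\textbf{BR}^{\mathbf{0}}(S,\theta)\setminus\textbf{BR}(S,\theta)\bigr)=\{\mathbf{0}\}$ is not open, so one genuinely needs a separate small argument here.

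The main obstacle, therefore, is the mixed case where one coordinate equals $\mathbf{0}$: one must produce an open neighbourhood of $\mathbf{0}$ disjoint from a given block $S_{k,l}$. I expect this to follow from Hausdorffness applied to $\mathbf{0}$ and any point of $\overline{S_{k,l}}$, combined with the fact that $\mathbf{0}$ is not in the closure of $S_{k,l}$ — and the latter holds because $S_{k,l}$ is open, hence if $\mathbf{0}\in\operatorname{cl}(S_{k,l})$ then every neighbourhood of $\mathbf{0}$ meets $S_{k,l}$, which together with openness of all the other $S_{m,n}$ and the partition structure would contradict Hausdorffness of the pair $(\mathbf{0},y)$ for $y\in S_{k,l}$ — so I would phrase it as: if $\mathbf{0}\in\operatorname{cl}(S_{k,l})$, pick $y\in S_{k,l}$ and disjoint neighbourhoods $U\ni\mathbf{0}$, $V\ni y$ with $V\subseteq S_{k,l}$; then $U\cap S_{k,l}\ne\varnothing$ by assumption, but that point also lies in some neighbourhood contradiction — this needs care and is the step I would write out in full. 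Once that is settled, the complement of $\eta^\natural$ in the square is covered by open boxes of the three types above, so $\eta^\natural$ is closed, completing the proof.
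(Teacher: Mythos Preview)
Your instinct to worry about the mixed case is exactly right, and the step you flag as ``needing care'' cannot in fact be completed from the stated hypotheses alone. To separate a pair $(\mathbf{0},y)$ with $y\in S_{k,l}$ from $\eta^\natural$ by a box $U\times V$ with $V\subseteq S_{k,l}$, you are forced to have $U\cap S_{k,l}=\varnothing$, since any $a\in U\cap S_{k,l}$ together with any $b\in V\subseteq S_{k,l}$ yields $(a,b)\in S_{k,l}\times S_{k,l}\subseteq\eta^\natural$. In other words you need $\mathbf{0}\notin\mathrm{cl}(S_{k,l})$, and openness of the blocks together with Hausdorffness does \emph{not} guarantee this. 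A concrete obstruction: take $S$ infinite, declare every nonzero point of $\textbf{BR}^{\mathbf{0}}(S,\theta)$ isolated, and let the basic neighbourhoods of $\mathbf{0}$ be the sets $\{\mathbf{0}\}\cup(S_{0,0}\setminus F)$ with $F\subseteq S_{0,0}$ finite. This topology is Hausdorff and every $S_{i,j}$ is open, yet for each $p\in S_{0,0}$ every neighbourhood of $(\mathbf{0},p)$ meets $S_{0,0}\times\{p\}\subseteq\eta^\natural$, so $(\mathbf{0},p)\in\overline{\eta^\natural}\setminus\eta^\natural$.

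The paper's own proof glosses over the very same point: it produces disjoint Hausdorff neighbourhoods $U(i,s,j)$ and $U(\mathbf{0})$ and asserts that $U(i,s,j)\times U(\mathbf{0})$ misses $\eta^\natural$, but mere disjointness does not prevent both sets from meeting a common block $S_{m,n}$. What rescues the paper is that Proposition~\ref{proposition-3.1} is only ever invoked (in Proposition~\ref{proposition-3.5}) after Lemma~\ref{lemma-3.4} has established that each $S_{i,j}$ is open \emph{and closed}; under that stronger hypothesis the mixed case is trivial, since one may take $U=\textbf{BR}^{\mathbf{0}}(S,\theta)\setminus S_{k,l}$ and $V=S_{k,l}$. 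So the correct repair is to add closedness of the $S_{i,j}$ to the hypotheses, after which your argument goes through cleanly.
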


\begin{proof}
Fix arbitrary non-$\eta^\natural$-equivalent non-zero elements $(i,s,j)$ and $(m,t,n)$ of the  semigroup $\textbf{BR}^{\mathbf{0}}(S,\theta)$. Then $S_{i,j}$ and $S_{m,n}$ are open disjoint neighbourhoods of the points $(i,s,j)$ and $(m,t,n)$ in the space $\left(\textbf{BR}^{\mathbf{0}}(S,\theta),\tau_{\textbf{BR}}^{\mathbf{0}}\right)$, respectively, such that $\eta^\natural\cap \left(S_{i,j}\times S_{m,n}\right)=\varnothing$. Since the topology $\tau_{\textbf{BR}}^{\mathbf{0}}$ is Hausdorff, there exist disjoint open neighbourhoods $U(i,s,j)$ and $U(\mathbf{0})$ of $(i,s,j)$ and $\mathbf{0}$ in $\left(\textbf{BR}^{\mathbf{0}}(S,\theta),\tau_{\textbf{BR}}^{\mathbf{0}}\right)$, respectively. This implies that $U(i,s,j)\times U(\mathbf{0})$ is an open neighbourhood of the ordered pair $\left((i, s, j),\mathbf{0}\right)$ in $\textbf{BR}^{\mathbf{0}}(S,\theta)\times \textbf{BR}^{\mathbf{0}}(S,\theta)$ with the product topology which does not intersect the congruence $\eta^\natural$ of the semigroup $\textbf{BR}^{\mathbf{0}}(S,\theta)$. Hence, $\eta^\natural$ is a closed congruence on the semigroup $\left(\textbf{BR}^{\mathbf{0}}(S,\theta),\tau_{\textbf{BR}}^{\mathbf{0}}\right)$.
\end{proof}

We put $\mathscr{C}^0=\mathscr{C}(p,q)\sqcup\{0\}$ be the bicyclic semigroup with adjoined zero. Obviously that the congruence $\eta^\natural$ on the Bruck--Reilly extension $\textbf{BR}^{\mathbf{0}}(S,\theta)$ of a semigroup $S$ generates the natural homomorphism $\eta\colon \textbf{BR}^{\mathbf{0}}(S,\theta)\to \mathscr{C}^0$.

\begin{lemma}\label{lemma-3.2}
Let \emph{$\left(\textbf{BR}^{\mathbf{0}}(S,\theta),\tau_{\textbf{BR}}^{\mathbf{0}}\right)$} be a semitopological semigroup with a compact (left, right) ideal. If the natural homomorphism \emph{$\eta\colon \textbf{BR}^{\mathbf{0}}(S,\theta)\to \mathscr{C}^0$} is a quotient map, then $\eta$ is an open map.
\end{lemma}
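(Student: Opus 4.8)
The plan is to exploit the very special structure of $\mathscr{C}^0$: every nonzero element $q^kp^l$ of $\mathscr C(p,q)$ is isolated in any Hausdorff shift-continuous topology on $\mathscr C^0$ (indeed, since $\mathscr{C}(p,q)$ is the bicyclic monoid, which carries only the discrete topology, and since $\eta$ is continuous onto $\mathscr C^0$ with the set $S_{i,j}=(q^ip^j)\eta^{-1}$ open in $\textbf{BR}^{\mathbf 0}(S,\theta)$, the singletons $\{q^ip^j\}$ are open), so the only point of $\mathscr C^0$ where openness of $\eta$ could fail is the zero $0$. Thus the whole problem reduces to showing: for every open $U\subseteq \textbf{BR}^{\mathbf 0}(S,\theta)$ with $\mathbf 0\in U$, the point $0$ lies in the interior of $(U)\eta$; equivalently, $0\notin\mathrm{cl}_{\mathscr C^0}\bigl(\mathscr C^0\setminus (U)\eta\bigr)$. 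Since $\mathscr C^0\setminus(U)\eta\subseteq\mathscr C(p,q)$ and $\mathscr C(p,q)$ is discrete, it suffices to show that $\mathscr C^0\setminus(U)\eta$ is \emph{finite}, because a finite subset of the Hausdorff space $\mathscr C^0$ not containing $0$ is closed and misses $0$. In other words, the key reduction is: \emph{every open neighbourhood of $\mathbf 0$ in $\textbf{BR}^{\mathbf 0}(S,\theta)$ meets all but finitely many of the blocks $S_{i,j}$.}

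First I would bring in the hypothesis that $\textbf{BR}^{\mathbf 0}(S,\theta)$ has a compact (say, two-sided; the one-sided cases are analogous) ideal $I$. Since $\mathbf 0$ is the zero of $\textbf{BR}^{\mathbf 0}(S,\theta)$, it lies in $I$, and $I\setminus\{\mathbf 0\}$ is a union of some blocks $S_{i,j}$ closed under the $\textbf{BR}$-multiplication together with the surrounding semigroup; because $\eta(I)$ is an ideal of $\mathscr C^0$ and $\{0\}$ is the unique proper ideal of $\mathscr C^0$ while $\eta$ is surjective, either $\eta(I)=\{0\}$ or $\eta(I)=\mathscr C^0$. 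If $\eta(I)=\mathscr C^0$, then (using that the $S_{i,j}$ partition $\textbf{BR}(S,\theta)$ and $I$ is saturated for $\eta$ on the nonzero part, which one checks from the ideal property) $I$ already contains every $S_{i,j}$, so $\textbf{BR}^{\mathbf 0}(S,\theta)=I$ is compact; then $\eta$ is a perfect surjection, hence closed, hence (being also open on the dense discrete part and a continuous bijection-like map onto the remaining point) open — here one argues directly that the image of an open neighbourhood of $\mathbf 0$ is open in $\mathscr C^0$ because its complement is a closed subset of the compact space $\textbf{BR}^{\mathbf 0}$ saturated under $\eta$, hence has closed, finite image. So the substantive case is $\eta(I)=\{0\}$, i.e.\ $I=\{\mathbf 0\}$: the zero is itself a compact ideal, which gives no information, and then one must use a right or left ideal instead, or rather use that the hypothesis is "(left, right) ideal" and run the symmetric argument. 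In every case, the strategy is to produce, from compactness of the ideal, a single open neighbourhood $W$ of $\mathbf 0$ whose complement meets only finitely many blocks, and then combine $W$ with an arbitrary open $U\ni\mathbf 0$.

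The crucial step, and the main obstacle, is the finiteness claim itself: showing that a compact ideal forces open neighbourhoods of $\mathbf 0$ to be "large" in the block-sense. The idea I would pursue is: suppose $V$ is an open neighbourhood of $\mathbf 0$ that avoids infinitely many blocks $S_{i_n,j_n}$, pick points $x_n\in S_{i_n,j_n}$, and use separate continuity of multiplication together with the ideal $I$ to push the $x_n$ into $I$ (e.g.\ multiply by a fixed element of $I$, or by idempotents $(k,1_S,k)$ with $k$ large) so as to manufacture an infinite subset of $I$ with no accumulation point, contradicting compactness of $I$; the combinatorial heart is controlling, via the explicit $\textbf{BR}$-multiplication formula in Theorem~\ref{theorem-2.8}, that these products still land in distinct blocks and remain outside a suitable neighbourhood of $\mathbf 0$, using that $\eta$ is a homomorphism onto $\mathscr C^0$ so that $\eta$-images of the products are the (discrete, hence separated) images in $\mathscr C^0$. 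Once the finiteness is in hand, the rest is the formal wrap-up: given open $U\ni\mathbf 0$, the set $\mathscr C^0\setminus(U)\eta$ is finite and misses $0$, hence closed in $\mathscr C^0$, hence $(U)\eta$ is open; combined with openness of $\eta$ at every nonzero point, $\eta$ is an open map. I would also record at the outset that $\eta$ being a quotient map is not actually needed for the openness conclusion in the nonzero-point case and enters only to guarantee that $(U)\eta$ open for all $U$ suffices — so the hypothesis is used lightly, essentially to know the topology on $\mathscr C^0$ is the quotient topology and thus $0$ has a neighbourhood base of sets $\{0\}\cup(\text{cofinite in }\mathscr C(p,q))$-type images, which dovetails with the finiteness argument.
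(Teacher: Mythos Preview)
Your plan contains a genuine gap: you aim to show that every open neighbourhood of $\mathbf{0}$ meets all but finitely many blocks $S_{i,j}$, but this finiteness statement is \emph{not} available under the hypotheses of Lemma~\ref{lemma-3.2}. In the paper it appears only later as Lemma~\ref{lemma-3.6}, under the additional standing assumptions of local compactness and compactness of $S_{i,i}$, and its proof actually \emph{uses} Lemma~\ref{lemma-3.2} (via Proposition~\ref{proposition-3.5}). So your route is both stronger than needed and, within the paper's logical order, circular. Your own discussion already flags the difficulty: when the compact ideal is just $\{\mathbf{0}\}$, there is nothing to push against, and the sketched ``push points into $I$ and contradict compactness'' argument does not get off the ground.

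The paper's proof avoids all of this by using the quotient hypothesis head-on, which you dismissed as entering ``lightly''. The key observation is that for a quotient map, $\eta(U)$ is open iff the saturation $U^*=\eta^{-1}(\eta(U))$ is open. For $U\ni\mathbf{0}$ one has
\[
U^*=\{\mathbf{0}\}\cup\bigcup\bigl\{S_{i,j}\colon S_{i,j}\cap U\neq\varnothing\bigr\},
\]
and the compact-ideal hypothesis is invoked (through Theorem~8 of \cite{Gutik-Pavlyk=2009}) only to ensure that the induced topology on $\textbf{BR}(S,\theta)$ is the sum direct topology, so each $S_{i,j}$ is open. Then $U^*$ is open as a union of $U$ with open blocks, and the quotient property finishes the argument. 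No finiteness, no compactness-of-sequences argument, no case analysis on $\eta(I)$ is needed.
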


\begin{proof}
Suppose that on $\mathscr{C}^0$ admits a topology such that the natural homomorphism $\eta\colon \textbf{BR}^{\mathbf{0}}(S,\theta)\to \mathscr{C}^0$ is a quotient map.

If $U$ is an open subset of $\left(\textbf{BR}^{\mathbf{0}}(S,\theta),\tau_{\textbf{BR}}^{\mathbf{0}}\right)$ such that $U\not\ni \mathbf{0}$, then $\eta(U)$ is an open subset of $\mathscr{C}^0$, because by Proposition 1 of \cite{Eberhart-Selden=1969}  the bicyclic monoid $\mathscr{C}(p,q)$ is a discrete open subset of the space $\mathscr{C}^0$.

Suppose $U\ni\mathbf{0}$ is an open subset of $\left(\textbf{BR}^{\mathbf{0}}(S,\theta),\tau_{\textbf{BR}}^{\mathbf{0}}\right)$. Put $U^*=\eta^{-1}(\eta(U))$. Then $U^*=\eta^{-1}(\eta(U^*))$. Since $\eta\colon \textbf{BR}^{\mathbf{0}}(S,\theta)\to \mathscr{C}^0$ is a natural homomorphism, $U^*=\bigcup\left\{ G_{i,j}\colon G_{i,j}\cap U\neq\varnothing\right\}\cup\{\mathbf{0}\}$. By Theorem~8 of \cite{Gutik-Pavlyk=2009} the restriction of the topology $\tau_{\textbf{BR}}^{\mathbf{0}}$ on the semigroup $\textbf{BR}(S,\theta)$ coincides with the sum direct topology $\tau_{\textbf{BR}}^{\oplus}$ on $\textbf{BR}(S,\theta)$. This implies that  $U^*$ is an open subset of the space $\left(\textbf{BR}^{\mathbf{0}}(S,\theta),\tau_{\textbf{BR}}^{\mathbf{0}}\right)$, and since $\eta$ is a quotient map and $U^*=\eta^{-1}(\eta(U^*))$, we conclude that $\eta(U)$ is an open subset of the space $\mathscr{C}^0$.
\end{proof}

The following example from \cite{Gutik=2015} shows that the semigroup $\mathscr{C}^0$ admits a shift-continuous compact Hausdorff topology.

\begin{example}[\cite{Gutik=2015}]\label{example-3.3}
On the semigroup $\mathscr{C}^0$ we define a topology $\tau_{\operatorname{\textsf{Ac}}}$ in the following way:
\begin{itemize}
  \item[$(i)$] every element of the bicyclic monoid ${\mathscr{C}}(p,q)$ is an isolated point in the space $(\mathscr{C}^0,\tau_{\operatorname{\textsf{Ac}}})$;
  \item[$(ii)$] the family $\mathscr{B}(0)=\left\{U\subseteq \mathscr{C}^0\colon U\ni 0 \hbox{~and~} {\mathscr{C}}(p,q)\setminus U \hbox{~is finite}\right\}$ determines a base of the topology $\tau_{\operatorname{\textsf{Ac}}}$ at zero $0\in\mathscr{C}^0$,
\end{itemize}
i.e., $\tau_{\operatorname{\textsf{Ac}}}$ is the topology of the Alexandroff one-point compactification of the discrete space ${\mathscr{C}}(p,q)$ with the remainder $\{0\}$. Then $(\mathscr{C}^0,\tau_{\operatorname{\textsf{Ac}}})$ is a Hausdorff compact semitopological semigroup.
\end{example}

\begin{lemma}\label{lemma-3.4}
Let \emph{$\left(\textbf{BR}^{\mathbf{0}}(S,\theta),\tau_{\textbf{BR}}^{\mathbf{0}}\right)$} be a Hausdorff semitopological semigroup with a compact subsemigroup $S_{i,i}$ for some $i\in\omega$. Then  $S_{i,j}$  is an open-and-closed subspace of \emph{$\left(\textbf{BR}^{\mathbf{0}}(S,\theta),\tau_{\textbf{BR}}^{\mathbf{0}}\right)$} for any $i,j\in\omega$.
\end{lemma}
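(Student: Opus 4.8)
The plan is to derive closedness of every $S_{i,j}$ from compactness, and openness from the retract argument already used in the proof of Proposition~\ref{proposition-2.4}.

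First I would fix $i_0\in\omega$ with $S_{i_0,i_0}$ compact, together with arbitrary $i,j\in\omega$. A direct computation with the Bruck--Reilly multiplication gives $(k,1_S,i_0)\cdot(i_0,s,i_0)\cdot(i_0,1_S,l)=(k,s,l)$ and $(i_0,1_S,k)\cdot(k,s,l)\cdot(l,1_S,i_0)=(i_0,s,i_0)$, so $x\mapsto(k,1_S,i_0)\cdot x\cdot(i_0,1_S,l)$ and $y\mapsto(i_0,1_S,k)\cdot y\cdot(l,1_S,i_0)$ are mutually inverse bijections between $S_{i_0,i_0}$ and $S_{k,l}$. Since $\textbf{BR}^{\mathbf{0}}(S,\theta)$ is a semitopological semigroup, both maps are continuous (they are composites of one-sided translations), so $S_{k,l}$ is homeomorphic to $S_{i_0,i_0}$ and hence compact; as $\textbf{BR}^{\mathbf{0}}(S,\theta)$ is Hausdorff, every $S_{k,l}$, and in particular $S_{i,j}$, is closed.

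For openness, write $e_m=(m,1_S,m)$. As in the proof of Proposition~\ref{proposition-2.4}, the continuous idempotent maps $x\mapsto e_m x$ and $x\mapsto x e_m$ exhibit $e_m\textbf{BR}^{\mathbf{0}}(S,\theta)$ and $\textbf{BR}^{\mathbf{0}}(S,\theta)e_m$ as retracts of the Hausdorff space $\textbf{BR}^{\mathbf{0}}(S,\theta)$, hence both are closed by \cite[1.5.C]{Engelking=1989}. A routine check of the multiplication gives $e_m\textbf{BR}^{\mathbf{0}}(S,\theta)=\{\mathbf{0}\}\cup\bigcup\{S_{a,b}\colon a\geqslant m\}$ and $\textbf{BR}^{\mathbf{0}}(S,\theta)e_m=\{\mathbf{0}\}\cup\bigcup\{S_{a,b}\colon b\geqslant m\}$, so
\[
W=\bigl(\textbf{BR}^{\mathbf{0}}(S,\theta)\setminus e_{i+1}\textbf{BR}^{\mathbf{0}}(S,\theta)\bigr)\cap\bigl(\textbf{BR}^{\mathbf{0}}(S,\theta)\setminus\textbf{BR}^{\mathbf{0}}(S,\theta)e_{j+1}\bigr)=\bigcup\{S_{a,b}\colon a\leqslant i,\ b\leqslant j\}
\]
is an open subset of $\textbf{BR}^{\mathbf{0}}(S,\theta)$ containing $S_{i,j}$. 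Since $W$ is the union of only finitely many of the sets $S_{a,b}$, and each of them is closed by the previous step, $W\setminus S_{i,j}=\bigcup\{S_{a,b}\colon a\leqslant i,\ b\leqslant j,\ (a,b)\neq(i,j)\}$ is closed, so $S_{i,j}$ is open in $W$ and hence open in $\textbf{BR}^{\mathbf{0}}(S,\theta)$. Together with the previous step, $S_{i,j}$ is open-and-closed.

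The one genuinely delicate point is openness: each $S_{a,b}$ is closed, but an infinite union of them need not be, so one cannot conclude directly that the complement of $S_{i,j}$ in $\textbf{BR}(S,\theta)$ is closed; the purpose of the retract-based open set $W$ is exactly to cut the problem down to a finite union, after which everything is bookkeeping. An alternative route to openness would be to note that $S_{i_0,i_0}$ is compact Hausdorff, so Corollary~\ref{corollary-2.7} applies to the open subsemigroup $\textbf{BR}(S,\theta)=\textbf{BR}^{\mathbf{0}}(S,\theta)\setminus\{\mathbf{0}\}$ and its induced topology is the sum direct topology, in which each $S_{a,b}$ is clopen.
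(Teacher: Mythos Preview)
Your proof is correct and follows essentially the same strategy as the paper: establish compactness (hence closedness) of every $S_{k,l}$ via the translation homeomorphisms with $S_{i_0,i_0}$, then use the retract argument for $e_m\textbf{BR}^{\mathbf{0}}(S,\theta)$ and $\textbf{BR}^{\mathbf{0}}(S,\theta)e_m$ to produce an open set that is a finite union of such pieces, from which $S_{i,j}$ is obtained by removing the remaining (finitely many, hence closed) $S_{a,b}$. The paper's version invokes Proposition~2.4$(iv)$ of \cite{Gutik=2018} for the homeomorphism step rather than writing out the mutually inverse translations, and uses the square $\{a,b\leqslant k\}$ instead of your rectangle $\{a\leqslant i,\ b\leqslant j\}$, but the logic is identical.
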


\begin{proof}
Since $(i,1_S,i)$ is an idempotent of $\textbf{BR}^{\mathbf{0}}(S,\theta)$ for any $i\in\omega$ the subsets
$(i,1_S,i)\cdot \textbf{BR}^{\mathbf{0}}(T,\theta)$ and $\textbf{BR}^{\mathbf{0}}(T,\theta)\cdot (i,1_S,i)$ are retracts of $\left(\textbf{BR}^{\mathbf{0}}(T,\theta),\tau_{\textbf{BR}}^{\mathbf{0}}\right)$, and hence by \cite[1.5.C]{Engelking=1989} they are closed subsets in the topological space $\left(\textbf{BR}^{\mathbf{0}}(T,\theta),\tau_{\textbf{BR}}^{\mathbf{0}}\right)$. Then
\begin{equation*}
T_{k,k}=\textbf{BR}^{\mathbf{0}}(T,\theta)\setminus\left((k+1,1_S,k+1)\cdot \textbf{BR}^{\mathbf{0}}(T,\theta)\cup \textbf{BR}(T,\theta)\cdot (k+1,1_S,k+1)\right)
\end{equation*}
is an open subset of $\left(\textbf{BR}(T,\theta),\tau_{\textbf{BR}}\right)$ for any  $k\in\omega$. Since the subsemigroup $S_{i,i}$ of $\left(\textbf{BR}^{\mathbf{0}}(S,\theta),\tau_{\textbf{BR}}^{\mathbf{0}}\right)$ is compact for some $i\in\omega$ and by Proposition~2.4$(iv)$ from \cite{Gutik=2018} the subspaces $S_{i,j}$ of $\left(\textbf{BR}^{\mathbf{0}}(S,\theta),\tau_{\textbf{BR}}^{\mathbf{0}}\right)$, $i,j\in\omega$, are hemeomorphic, $S_{i,j}$ are compact subspaces of $\left(\textbf{BR}^{\mathbf{0}}(S,\theta),\tau_{\textbf{BR}}^{\mathbf{0}}\right)$. Then for any $i,j\leqslant k$ the subspace $S_{i,j}$ is open-and-closed in $\left(\textbf{BR}^{\mathbf{0}}(S,\theta),\tau_{\textbf{BR}}^{\mathbf{0}}\right)$.
\end{proof}

\begin{proposition}\label{proposition-3.5}
Let \emph{$\left(\textbf{BR}^{\mathbf{0}}(S,\theta),\tau_{\textbf{BR}}^{\mathbf{0}}\right)$} be a Hausdorff locally compact semitopological semigroup with a compact subsemigroup $S_{i,i}$ for some $i\in\omega$. Then the quotient semigroup \emph{$\textbf{BR}^{\mathbf{0}}(S,\theta)/\eta^\natural$} with the quotient topology is topologically isomorphic to the semigroup $\mathscr{C}^0$ with either the topology $\tau_{\operatorname{\textsf{Ac}}}$ or the discrete topology.
\end{proposition}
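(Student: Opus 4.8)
The plan is to analyze the quotient map $\eta\colon \textbf{BR}^{\mathbf{0}}(S,\theta)\to\mathscr{C}^0$ and show that the quotient topology on $\mathscr{C}^0$ is a Hausdorff shift-continuous topology in which $\mathscr{C}(p,q)$ is a discrete open subspace and the whole space is locally compact; then invoke the classification of such topologies on $\mathscr{C}^0$ obtained in \cite{Gutik=2015} (namely, that every Hausdorff locally compact shift-continuous topology on $\mathscr{C}^0$ is either compact, and hence $\tau_{\operatorname{\textsf{Ac}}}$, or discrete). First I would note that by Lemma~\ref{lemma-3.4} each $S_{i,j}$ is an open-and-closed compact subspace of $\left(\textbf{BR}^{\mathbf{0}}(S,\theta),\tau_{\textbf{BR}}^{\mathbf{0}}\right)$, so every $S_{i,j}$ is a compact ideal-like piece; in particular the quotient map $\eta$ is constant on each $S_{i,j}$ and these are precisely the $\eta^\natural$-classes other than $\{\mathbf{0}\}$. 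Since each $S_{i,j}$ is compact and open, $\eta$ restricted away from $\mathbf{0}$ behaves well, and the congruence $\eta^\natural$ is closed by Proposition~\ref{proposition-3.1}, so the quotient space $\textbf{BR}^{\mathbf{0}}(S,\theta)/\eta^\natural\cong\mathscr{C}^0$ is Hausdorff.

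Next I would verify the three hypotheses needed to apply the Gutik dichotomy. Shift-continuity of the quotient: the quotient of a semitopological semigroup by a closed congruence, when the quotient map is open, is again semitopological — and by Lemma~\ref{lemma-3.2}, once the compact subsemigroup $S_{i,i}$ (hence a compact subspace, and by translation a compact ideal $\mathbf{BR}^{\mathbf{0}}(S,\theta)\cdot(i,1_S,i)$ or similar, or simply using that the whole restriction is fine) is present, $\eta$ being quotient forces $\eta$ to be \emph{open}; so I must first check $\eta$ is a quotient map, which follows because each $\eta^\natural$-class $S_{i,j}$ is open-and-closed and $\{\mathbf{0}\}$ is a single point whose preimage is $\{\mathbf{0}\}$ together with nothing else, making the saturation of any open set open. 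Discreteness of $\mathscr{C}(p,q)$ in the quotient: each singleton $\{q^ip^j\}$ has preimage $S_{i,j}$, which is open in $\textbf{BR}^{\mathbf{0}}(S,\theta)$, so $\{q^ip^j\}$ is open in the quotient. Local compactness of the quotient: $\textbf{BR}^{\mathbf{0}}(S,\theta)$ is locally compact by hypothesis, and $\eta$ is a continuous open surjection, so the image is locally compact (an open continuous image of a locally compact space is locally compact).

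Having established that the quotient topology on $\mathscr{C}^0$ is a Hausdorff locally compact shift-continuous topology with $\mathscr{C}(p,q)$ discrete and open, I would then apply the main theorem of \cite{Gutik=2015}, which asserts exactly that such a topology is either compact — in which case it must be the one-point Alexandroff compactification topology $\tau_{\operatorname{\textsf{Ac}}}$ of Example~\ref{example-3.3}, since $\mathscr{C}(p,q)$ is dense in $\mathscr{C}^0$ and the only compactification of a discrete space by one point is the Alexandroff one — or else $\mathbf{0}$ is isolated, giving the discrete topology on $\mathscr{C}^0$. This exhausts the possibilities and gives the stated conclusion, and the topological isomorphism is the obvious one induced by $\eta$.

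The main obstacle I anticipate is justifying that $\eta$ is a quotient map and that the quotient topology is genuinely shift-continuous: one must carefully use the open-and-closedness of the $S_{i,j}$ (Lemma~\ref{lemma-3.4}) to see that saturations of open sets are open — the only subtle point is the behavior near $\mathbf{0}$, where an open set $U\ni\mathbf{0}$ has saturation $\bigcup\{S_{i,j}\colon S_{i,j}\cap U\neq\varnothing\}\cup\{\mathbf{0}\}$, which is open precisely because each $S_{i,j}$ is open and the topology on $\textbf{BR}(S,\theta)\subseteq\textbf{BR}^{\mathbf{0}}(S,\theta)$ is the sum direct topology (this is where Lemma~\ref{lemma-3.2}'s proof mechanism is reused). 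Once quotient-ness is in hand, openness of $\eta$ is immediate from Lemma~\ref{lemma-3.2}, and separate continuity of the multiplication on $\mathscr{C}^0$ transfers along the open map $\eta$ routinely.
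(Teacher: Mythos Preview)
Your proposal is correct and follows essentially the same route as the paper's proof: Lemma~\ref{lemma-3.4} gives open-and-closed $S_{i,j}$, Proposition~\ref{proposition-3.1} makes $\eta^\natural$ closed so the quotient is Hausdorff, Lemma~\ref{lemma-3.2} makes $\eta$ open so the quotient is locally compact, and then the dichotomy from \cite{Gutik=2015} finishes. One small point: you spend effort ``checking that $\eta$ is a quotient map'', but this is automatic --- the statement concerns $\textbf{BR}^{\mathbf{0}}(S,\theta)/\eta^\natural$ \emph{with the quotient topology}, so the natural map is quotient by definition; your saturation argument is really the proof that $\eta$ is \emph{open} (i.e.\ the content of Lemma~\ref{lemma-3.2}), and the compact-ideal hypothesis of that lemma is trivially met by $\{\mathbf{0}\}$, so there is no need to manufacture a larger compact ideal from $S_{i,i}$.
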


\begin{proof}
By Lemma~\ref{lemma-3.4}, $S_{i,j}$  is an open-and-closed subspace of \emph{$\left(\textbf{BR}^{\mathbf{0}}(S,\theta),\tau_{\textbf{BR}}^{\mathbf{0}}\right)$} for any $i,j\in\omega$ and hence, by Proposition~\ref{proposition-3.1}, $\eta^\natural$ is a closed congruence on $\left(\textbf{BR}^{\mathbf{0}}(S,\theta),\tau_{\textbf{BR}}^{\mathbf{0}}\right)$. Then the quotient semigroup $\textbf{BR}^{\mathbf{0}}(S,\theta)/\eta^\natural$ with the quotient topology is a Hausdorff space. Lemma~\ref{lemma-3.2} implies that $\eta\colon \textbf{BR}^{\mathbf{0}}(S,\theta)\to \mathscr{C}^0$ is an open map. Hence by Theorem 3.3.15 from \cite{Engelking=1989}, the quotient semigroup $\textbf{BR}^{\mathbf{0}}(S,\theta)/\eta^\natural$ with the quotient topology is a locally compact space. Since $\textbf{BR}^{\mathbf{0}}(S,\theta)/\eta^\natural$ is isomorphic to the semigroup $\mathscr{C}^0$, Theorem 1 of \cite{Gutik=2015} implies the statement of the proposition.
\end{proof}

Later in this section, if the otherwise is not stated explicitly, we assume that $\tau_{\textbf{BR}}^{\mathbf{0}}$ is a Hausdorff locally compact shift-continuous topology on the semigroup $\textbf{BR}^{\mathbf{0}}(S,\theta)$ such that the following conditions hold:
\begin{enumerate}
  \item[$(i)$] the subsemigroup $S_{i,i}$ of $\textbf{BR}^{\mathbf{0}}(S,\theta)$ with the restriction topology from $\left(\textbf{BR}^{\mathbf{0}}(S,\theta),\tau_{\textbf{BR}}^{\mathbf{0}}\right)$ is a compact semitopological semigroup for some $i\in\omega$ (and hence, by Proposition~2.4 of \cite{Gutik=2018} for all $i\in\omega$);
  \item[$(ii)$] $\mathbf{0}$ is non-isolated point of $\left(\textbf{BR}^{\mathbf{0}}(S,\theta),\tau_{\textbf{BR}}^{\mathbf{0}}\right)$.
\end{enumerate}

Let $\mathscr{P}=\{P_\alpha\colon \alpha\in\mathscr{I}\}$ be an infinite family of nonempty subsets of a set $X$. We shall say that a set $A\subseteq X$ \emph{intersects almost all} subsets of $\mathscr{P}$ if $A\cap P_\alpha=\varnothing$ for finitely many $P_\alpha\in \mathscr{P}$.

\begin{lemma}\label{lemma-3.6}
Every open neighbourhood $U_{\mathbf{0}}$ of zero $\mathbf{0}$ in \emph{$\left(\textbf{BR}^{\mathbf{0}}(S,\theta),\tau_{\textbf{BR}}^{\mathbf{0}}\right)$} intersects almost all subsets $S_{i,j}$, $i,j\in\omega$, of \emph{$\textbf{BR}(S,\theta)$}
\end{lemma}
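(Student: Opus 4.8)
The plan is to argue by contradiction using the quotient structure established in Proposition~\ref{proposition-3.5}. Suppose that some open neighbourhood $U_{\mathbf{0}}$ of $\mathbf{0}$ in $\left(\textbf{BR}^{\mathbf{0}}(S,\theta),\tau_{\textbf{BR}}^{\mathbf{0}}\right)$ fails to intersect $S_{i,j}$ for infinitely many pairs $(i,j)\in\omega\times\omega$. By Lemma~\ref{lemma-3.4} each $S_{i,j}$ is open-and-closed, so $U_{\mathbf{0}}^{*}=\bigcup\{S_{i,j}\colon S_{i,j}\cap U_{\mathbf{0}}\neq\varnothing\}\cup\{\mathbf{0}\}$ is again an open neighbourhood of $\mathbf{0}$ which is \emph{saturated} with respect to $\eta^{\natural}$, and by hypothesis $\eta\bigl(\textbf{BR}(S,\theta)\setminus U_{\mathbf{0}}^{*}\bigr)=q^{i}p^{j}$ for infinitely many $(i,j)$. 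Under the natural homomorphism $\eta\colon \textbf{BR}^{\mathbf{0}}(S,\theta)\to\mathscr{C}^{0}$, Proposition~\ref{proposition-3.5} tells us that the quotient topology on $\mathscr{C}^{0}$ is either discrete or $\tau_{\operatorname{\textsf{Ac}}}$; since condition~$(ii)$ forces $\mathbf{0}$ to be non-isolated and $\eta$ is open (Lemma~\ref{lemma-3.2}), the image of $\mathbf{0}$ is non-isolated, hence the quotient topology must be $\tau_{\operatorname{\textsf{Ac}}}$, the Alexandroff compactification topology.

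Now I would exploit the defining property of $\tau_{\operatorname{\textsf{Ac}}}$: every open neighbourhood of the zero of $\mathscr{C}^{0}$ has finite complement in $\mathscr{C}(p,q)$. Applying $\eta$ to the saturated open set $U_{\mathbf{0}}^{*}$, we get an open neighbourhood $\eta(U_{\mathbf{0}}^{*})$ of $\eta(\mathbf{0})=0$ in $(\mathscr{C}^{0},\tau_{\operatorname{\textsf{Ac}}})$ whose complement contains all the elements $q^{i}p^{j}$ with $S_{i,j}\cap U_{\mathbf{0}}=\varnothing$. If that collection of pairs is infinite, then $\mathscr{C}(p,q)\setminus\eta(U_{\mathbf{0}}^{*})$ is infinite, contradicting the description of the base $\mathscr{B}(0)$ in Example~\ref{example-3.3}. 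Hence only finitely many $S_{i,j}$ can miss $U_{\mathbf{0}}$, which is exactly the claim. It remains only to handle the degenerate alternative: if the quotient topology were discrete, then $\{\eta(\mathbf{0})\}$ would be open and saturated, so $\eta^{-1}(\{\eta(\mathbf{0})\})=\{\mathbf{0}\}$ would be open, making $\mathbf{0}$ isolated and contradicting~$(ii)$; so this case does not occur.

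I expect the main obstacle to be the bookkeeping needed to verify that $U_{\mathbf{0}}^{*}$ is genuinely open and $\eta^{\natural}$-saturated, and that $\eta(U_{\mathbf{0}}^{*})$ is honestly open in the quotient. The openness of $U_{\mathbf{0}}^{*}$ follows because, by Corollary~\ref{corollary-2.7} / Theorem~8 of \cite{Gutik-Pavlyk=2009}, the restriction of $\tau_{\textbf{BR}}^{\mathbf{0}}$ to $\textbf{BR}(S,\theta)$ is the sum direct topology, so a union of the clopen blocks $S_{i,j}$ together with $\mathbf{0}$ is open precisely when it contains some neighbourhood of $\mathbf{0}$ — and $U_{\mathbf{0}}\subseteq U_{\mathbf{0}}^{*}$ supplies that. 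Saturation is immediate from the fact that the $\eta^{\natural}$-classes on $\textbf{BR}(S,\theta)$ are exactly the sets $S_{i,j}$, together with the singleton $\{\mathbf{0}\}$. Once these two points are in place, the openness of $\eta$ from Lemma~\ref{lemma-3.2} does the rest, and the finite-complement property of $\tau_{\operatorname{\textsf{Ac}}}$ closes the argument.
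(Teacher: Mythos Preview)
Your proof is correct and follows essentially the same strategy as the paper: argue by contradiction, push the hypothetical neighbourhood down through the open quotient map $\eta$ (Lemma~\ref{lemma-3.2}), and observe that the resulting quotient topology on $\mathscr{C}^{0}$ is then neither discrete nor the Alexandroff topology $\tau_{\operatorname{\textsf{Ac}}}$, contradicting Proposition~\ref{proposition-3.5}. The paper compresses this into a single sentence, whereas you spell out the two cases and the auxiliary saturated set $U_{\mathbf{0}}^{*}$ explicitly, but the underlying argument is the same.
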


\begin{proof}
Suppose to the contrary that there exists an open neighbourhood $U_{\mathbf{0}}$ of zero in $\left(\textbf{BR}^{\mathbf{0}}(S,\theta),\tau_{\textbf{BR}}^{\mathbf{0}}\right)$ such that $U_{\mathbf{0}}\cap S_{i,j}=\varnothing$ for infinitely many $S_{i,j}$, $i,j\in\omega$. Then by Lemma~\ref{lemma-3.2} the quotient natural homomorphism $\eta\colon \textbf{BR}^{\mathbf{0}}(S,\theta)\to \mathscr{C}^0$ is an open map, and hence the quotient semigroup $\textbf{BR}^{\mathbf{0}}(S,\theta)/\eta^\natural$  with the quotient topology is neither compact nor discrete, which contradicts Proposition~\ref{proposition-3.5}.
\end{proof}

For an arbitrary subset $A$ of $\textbf{BR}^{\mathbf{0}}(S,\theta)$ and any $i,j\in\omega$ we denote $[A]_{i,j}=A\cap S_{i,j}$.

\begin{lemma}\label{lemma-3.7}
For every open neighbourhood $U_{\mathbf{0}}$ of zero $\mathbf{0}$ in \emph{$\left(\textbf{BR}^{\mathbf{0}}(S,\theta),\tau_{\textbf{BR}}^{\mathbf{0}}\right)$} and any $i_0\in\omega$ the sets
\begin{equation*}
  \left\{j\in\omega\colon S_{i_0,j}\nsubseteq  U_{\mathbf{0}}\right\} \qquad \hbox{and} \qquad \left\{j\in\omega\colon S_{j,i_0}\nsubseteq  U_{\mathbf{0}}\right\}
\end{equation*}
are finite.
\end{lemma}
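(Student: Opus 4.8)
The plan is to reduce both statements to Lemma~\ref{lemma-3.6} by using the separate continuity of the semigroup operation (shifts) together with the compactness of the subspaces $S_{i,j}$ established in Lemma~\ref{lemma-3.4}. Fix an open neighbourhood $U_{\mathbf{0}}$ of $\mathbf{0}$ and $i_0\in\omega$. First I would recall that each $S_{i,j}$ is an open-and-closed compact subspace and that left and right translations on $\textbf{BR}^{\mathbf{0}}(S,\theta)$ are continuous, so they permute the blocks $S_{i,j}$ in the same way the corresponding translations do on the bicyclic monoid $\mathscr{C}(p,q)$ (via the homomorphism $\eta$). Concretely, for a fixed idempotent-type element $(k,1_S,k)$ one has $(k,1_S,k)\cdot S_{i,j}\subseteq S_{i',j}$ and $S_{i,j}\cdot(k,1_S,k)\subseteq S_{i,j'}$ with the indices shifting predictably; more useful here are multiplications by the generators, i.e. by $(1,1_S,0)$ and $(0,1_S,1)$, which send $S_{i,j}$ onto (a translate inside) $S_{i+1,j}$, $S_{i-1,j}$, $S_{i,j+1}$, $S_{i,j-1}$ respectively, and are homeomorphisms onto their images when restricted appropriately.

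The key step is this: since $\mathbf{0}$ is a (two-sided) zero, left translation $\lambda\colon x\mapsto (0,1_S,i_0)\cdot x$ is continuous and sends $\mathbf{0}$ to $\mathbf{0}$, so $\lambda^{-1}(U_{\mathbf{0}})$ is an open neighbourhood of $\mathbf{0}$; by Lemma~\ref{lemma-3.6} it intersects almost all $S_{i,j}$. Tracking how $\lambda$ acts on blocks, $\lambda$ maps $S_{k,l}$ into $S_{i_0,l}$ whenever $k=0$ (and more generally shifts the first coordinate), so from ``$U_{\mathbf{0}}$ meets almost all $S_{i_0,l}$'' one wants to deduce that in fact $U_{\mathbf{0}}$ \emph{contains} almost all $S_{i_0,l}$. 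The bridge for this is compactness of each $S_{i_0,l}$ combined with separate continuity: the map $S_{0,0}\times\omega\to\textbf{BR}^{\mathbf{0}}(S,\theta)$ obtained by suitable translations shows that if $U_{\mathbf{0}}$ meets $S_{i_0,l}$ for infinitely many $l$ but fails to contain $S_{i_0,l}$ for infinitely many $l$, one can translate to produce an open neighbourhood of $\mathbf{0}$ missing infinitely many blocks, again contradicting Lemma~\ref{lemma-3.6}. An alternative, cleaner route: apply Lemma~\ref{lemma-3.6} to the neighbourhood $(0,1_S,i_0)\cdot U_{\mathbf{0}}\cdot (i_0,1_S,0)$ — no, rather to a preimage neighbourhood — and observe that the complement of $U_{\mathbf{0}}$ inside the ``column'' $\bigcup_j S_{i_0,j}$, being closed and meeting only the blocks in $\{j: S_{i_0,j}\nsubseteq U_{\mathbf{0}}\}$, maps under a translation homeomorphically into $\textbf{BR}(S,\theta)$ in a way that forces a neighbourhood of $\mathbf{0}$ to avoid the corresponding blocks unless that index set is finite.

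I expect the main obstacle to be the passage from ``$U_{\mathbf{0}}$ \emph{intersects} $S_{i_0,j}$'' (which Lemma~\ref{lemma-3.6} gives for almost all $j$) to ``$U_{\mathbf{0}}$ \emph{contains} $S_{i_0,j}$'' for almost all $j$. Intersecting is weaker than containing, so the compactness of $S_{i_0,j}$ and the homogeneity of the blocks (Proposition~2.4$(iv)$ of \cite{Gutik=2018}, giving homeomorphisms $S_{i,j}\approx S_{m,n}$) must be leveraged carefully. The clean way: were $\{j: S_{i_0,j}\nsubseteq U_{\mathbf{0}}\}$ infinite, pick $x_j\in S_{i_0,j}\setminus U_{\mathbf{0}}$ for each such $j$; using right translation by $(i_0,1_S,0)$, the points $x_j\cdot(i_0,1_S,0)$ land in pairwise distinct blocks $S_{i_0,0}$ — wait, they collapse into one block, so instead one translates on one side to collapse the second coordinate and reads off that infinitely many such $x_j$ would accumulate, forcing by local compactness and Hausdorffness a closed discrete infinite set inside the compact $S_{i_0,0}$, a contradiction; while if they do not collapse, one gets an open neighbourhood of $\mathbf{0}$ disjoint from infinitely many blocks. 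Either horn contradicts a previously established fact, so both index sets are finite. The argument for $\{j: S_{j,i_0}\nsubseteq U_{\mathbf{0}}\}$ is symmetric, using left translations in place of right translations.
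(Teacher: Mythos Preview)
Your proposal correctly isolates the real difficulty --- passing from ``$U_{\mathbf{0}}$ \emph{intersects} almost all $S_{i_0,j}$'' (Lemma~\ref{lemma-3.6}) to ``$U_{\mathbf{0}}$ \emph{contains} almost all $S_{i_0,j}$'' --- but none of the sketches you offer actually bridges that gap. The translation tricks do not help: if $V_{\mathbf{0}}$ is any preimage of $U_{\mathbf{0}}$ under a single shift, then $V_{\mathbf{0}}$ still intersects almost all blocks whenever $U_{\mathbf{0}}$ does, so you cannot manufacture a neighbourhood of $\mathbf{0}$ that \emph{misses} infinitely many blocks and invoke Lemma~\ref{lemma-3.6}. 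Your accumulation-point idea also breaks down exactly where you flag it: collapsing the points $x_j\in S_{i_0,j}\setminus U_{\mathbf{0}}$ into a single block requires a \emph{different} translation for each $j$, so no single continuous map is available, and there is no reason the images should be discrete or should accumulate anywhere. At the end you assert ``either horn contradicts a previously established fact'' without saying which fact or how; as written, neither horn does.

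The ingredient you are missing is the \emph{local compactness} hypothesis, used not for the blocks $S_{i,j}$ (whose compactness you do cite) but for $U_{\mathbf{0}}$ itself: one first replaces $U_{\mathbf{0}}$ by a neighbourhood with compact closure, so that the ``row'' set $\mathcal{S}^0_{i_0}(U_{\mathbf{0}})=\{\mathbf{0}\}\cup\bigcup_{j}\big[\mathrm{cl}_{\textbf{BR}^{\mathbf{0}}(S,\theta)}(U_{\mathbf{0}})\big]_{i_0,j}$ is compact. The paper then runs a finite-subcover argument against this compactum: it covers $\mathcal{S}^0_{i_0}(U_{\mathbf{0}})$ by a preimage neighbourhood $V_{\mathbf{0}}=(U_{\mathbf{0}})\rho_{(1,1_S,0)}^{-1}$ together with the blocks $S_{i_0,j}$, and shows --- via the relation $(i_0,s,j)\cdot(1,1_S,0)=(i_0,s,j-1)$ and, in one case, a tailored left shift by $(i_0,s\cdot((t)\theta)^{-1},i_0+1)$ --- that under the contradiction hypothesis infinitely many of the $S_{i_0,j}$ are indispensable in the cover. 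That compactness-of-$\mathrm{cl}(U_{\mathbf{0}})$ step is the engine of the proof, and it is absent from your proposal.
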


\begin{proof}
Suppose to the contrary that there exist an open neighbourhood $U_{\mathbf{0}}$ of zero in $\left(\textbf{BR}^{\mathbf{0}}(S,\theta),\tau_{\textbf{BR}}^{\mathbf{0}}\right)$ and $i_0\in\omega$ such that the set $\left\{j\in\omega\colon S_{i_0,j}\subseteq  U_{\mathbf{0}}\right\}$ is infinite. Since $\left(\textbf{BR}^{\mathbf{0}}(S,\theta),\tau_{\textbf{BR}}^{\mathbf{0}}\right)$ is a locally compact space, we can take a regular open neighbourhood $U_{\mathbf{0}}$ of the zero with compact closure.

We consider the following two cases:
\begin{enumerate}
  \item[$(i)$] there exists $j_0\in\omega$ such that $[U_{\mathbf{0}}]_{i_0,j}\neq S_{i_0,j}$ for all $j\geqslant j_0$;
  \item[$(ii)$] for every $k\in\mathbb{N}$ there exists a positive integer $n>k$ such that  $[U_{\mathbf{0}}]_{i_0,n}= S_{i_0,n}$.
\end{enumerate}

Suppose case $(i)$ holds. Since every subset $S_{i,j}$, $i,j\in\omega$, of $\left(\textbf{BR}^{\mathbf{0}}(S,\theta),\tau_{\textbf{BR}}^{\mathbf{0}}\right)$ is compact, the separate continuity of the semigroup operation in $\left(\textbf{BR}^{\mathbf{0}}(S,\theta),\tau_{\textbf{BR}}^{\mathbf{0}}\right)$ and Lemma~\ref{lemma-3.6} imply that without loss of generality we may assume that $j_0=0$. By Lemma~\ref{lemma-3.4} every subset $S_{i,j}$ is open-and-compact in $\left(\textbf{BR}^{\mathbf{0}}(S,\theta),\tau_{\textbf{BR}}^{\mathbf{0}}\right)$, and hence, the set
\begin{equation*}
  \mathcal{S}^0_{i_0}(U_{\mathbf{0}})=\left\{\mathbf{0}\right\}\cup \bigcup_{j\in\omega} \left[\mathrm{cl}_{\textbf{BR}^{\mathbf{0}}(S,\theta)}(U_{\mathbf{0}})\right]_{i_0,j}
\end{equation*}
is compact. By Lemma~\ref{lemma-3.4} the family $\mathscr{U}_{i_0}=\left\{\left\{U_{\mathbf{0}}\right\},\left\{S_{i_0,j}\colon j\in\omega\right\}\right\}$ is an open cover of the compactum $\mathcal{S}^0_{i_0}(U_{\mathbf{0}})$, and hence, there exists $j_1\in\mathbb{N}$ such that
\begin{equation*}
  \left[U_{\mathbf{0}}\right]_{i_0,n}=\left[\mathrm{cl}_{\textbf{BR}^{\mathbf{0}}(S,\theta)}(U_{\mathbf{0}})\right]_{i_0,n}
\end{equation*}
for all  integers $n\geqslant j_1$. Since the right shift
\begin{equation*}
  \rho_{(1,1_S,0)}\colon \textbf{BR}^{\mathbf{0}}(S,\theta)\to \textbf{BR}^{\mathbf{0}}(S,\theta), \; x\mapsto x\cdot (1,1_S,0)
\end{equation*}
is continuous in $\left(\textbf{BR}^{\mathbf{0}}(S,\theta),\tau_{\textbf{BR}}^{\mathbf{0}}\right)$, the full preimage $V_{\mathbf{0}}=\rho^{-1}_{(1,1_S,0)}(U_{\mathbf{0}})$ is an open neighbourhood of the zero $\mathbf{0}$. By Lemma~\ref{lemma-3.4} the family $\mathscr{V}_{i_0}=\left\{\left\{V_{\mathbf{0}}\right\},\left\{S_{i_0,j}\colon j\in\omega\right\}\right\}$ is an open cover of the compactum $\mathcal{S}^0_{i_0}(U_{\mathbf{0}})$, and hence, there exists a positive integer $j_2\geqslant j_1$ such that
\begin{equation}\label{eq-3.1}
  \left[V_{\mathbf{0}}\right]_{i_0,n}=\left[U_{\mathbf{0}}\right]_{i_0,n}= \left[\mathrm{cl}_{\textbf{BR}^{\mathbf{0}}(S,\theta)}(U_{\mathbf{0}})\right]_{i_0,n}
\end{equation}
for all integers $n\geqslant j_2$. Indeed, since $(i_0,s,j)\cdot(1,1_S,0)=(i_0,s,j-1)$ for all $j\in\mathbb{N}$ and any $s\in S$, we obtain that the equalities \eqref{eq-3.1} holds for all integers $n\geqslant j_2$. Put $\widetilde{U}_{\mathbf{0}}=U_{\mathbf{0}}\setminus\left(S_{i_0,0}\cup\cdots\cup S_{i_0,j_2-1}\right)$. By Lemma~\ref{lemma-3.4}, $\widetilde{U}_{\mathbf{0}}$ is an open neighbourhood of zero in $\left(\textbf{BR}^{\mathbf{0}}(S,\theta),\tau_{\textbf{BR}}^{\mathbf{0}}\right)$ such that
\begin{equation*}
  \big[\widetilde{U}_{\mathbf{0}}\big]_{i_0,n}=\left[U_{\mathbf{0}}\right]_{i_0,n}= \left[\mathrm{cl}_{\textbf{BR}^{\mathbf{0}}(S,\theta)}(U_{\mathbf{0}})\right]_{i_0,n}
\end{equation*}
for all integers $n\geqslant j_2$. Since the space $\left(\textbf{BR}^{\mathbf{0}}(S,\theta),\tau_{\textbf{BR}}^{\mathbf{0}}\right)$ is locally compact, without loss of generality we may assume that the neighbourhood $U_{\mathbf{0}}$ is a regular open set. This implies that $\widetilde{U}_{\mathbf{0}}$ is a regular open set, as well. Hence  there exist distinct $s,t\in S$ such that $(i_0,s,n)\notin \left[U_{\mathbf{0}}\right]_{i_0,n}$ and $(i_0,t,n)\in\left[U_{\mathbf{0}}\right]_{i_0,n}$ for all integers $n\geqslant j_2$. But we have that
\begin{equation*}
  (i_0,s\cdot ((t)\theta)^{-1},i_0+1)\cdot (i_0,t,n)=(i_0,s\cdot ((t)\theta)^{-1}\cdot(t)\theta,n+1)=(i_0,s,n+1).
\end{equation*}
Let $W_{\mathbf{0}}=(\widetilde{U}_{\mathbf{0}})\lambda^{-1}_{(i_0,s\cdot ((t)\theta)^{-1},i_0+1)}$, where $\lambda_{(i_0,s\cdot ((t)\theta)^{-1},i_0+1)}$ is the left shift on the element  $(i_0,s\cdot ((t)\theta)^{-1},i_0+1)$ in the semigroup $\textbf{BR}^{\mathbf{0}}(S,\theta)$. Then  we have that
\begin{equation*}
  \big[\widetilde{U}_{\mathbf{0}}\big]_{i_0,n}\setminus \big[W_{\mathbf{0}}\big]_{i_0,n}\neq\varnothing \qquad \hbox{and} \qquad
  \big[W_{\mathbf{0}}\big]_{i_0,n}\setminus \big[\widetilde{U}_{\mathbf{0}}\big]_{i_0,n}\neq\varnothing
\end{equation*}
for all integers $n\geqslant j_2+1$. By Lemma~\ref{lemma-3.4}  the family $\mathscr{W}_{i_0}=\left\{\left\{W_0\right\}, \left\{S_{i_0,j}\colon j\in\omega\right\}\right\}$  is an open cover of $\mathcal{S}^0_{i_0}(U_{\mathbf{0}})$ which has no a finite subcover. This contradicts the
compactness of $\mathcal{S}^0_{i_0}(U_{\mathbf{0}})$, and hence, the set $\left\{j\in\omega\colon S_{i_0,j}\nsubseteq  U_{\mathbf{0}}\right\}$ is finite.

Suppose case $(ii)$ holds. Then there are infinitely many $j\in\omega$ such that $[U_{\mathbf{0}}]_{i_0,j}= S_{i_0,j}$ but $[U_{\mathbf{0}}]_{i_0,j-1}\neq S_{i_0,j-1}$. Since every subset $S_{i,j}$, $i,j\in\omega$, of $\left(\textbf{BR}^{\mathbf{0}}(S,\theta),\tau_{\textbf{BR}}^{\mathbf{0}}\right)$ is compact, Lemma~\ref{lemma-3.4} implies that every subset $S_{i,j}$ is open-and-compact in $\left(\textbf{BR}^{\mathbf{0}}(S,\theta),\tau_{\textbf{BR}}^{\mathbf{0}}\right)$, and hence, the set
\begin{equation*}
  \mathcal{S}^0_{i_0}(U_{\mathbf{0}})=\left\{\mathbf{0}\right\}\cup \bigcup_{j\in\omega} \left[\mathrm{cl}_{\textbf{BR}^{\mathbf{0}}(S,\theta)}(U_{\mathbf{0}})\right]_{i_0,j}
\end{equation*}
is compact. Let $V_{\mathbf{0}}=(U_{\mathbf{0}})\rho^{-1}_{(1,1_S,0)}$, where $\rho_{(1,1_S,0)}$ is the right shift on the element $(1,1_S,0)$ in the semigroup $\textbf{BR}^{\mathbf{0}}(S,\theta)$. By Lemma~\ref{lemma-3.4} the family $\mathscr{V}_{i_0}=\left\{\left\{V_0\right\}, \left\{S_{i_0,j}\colon j\in\omega\right\}\right\}$  is an open cover of the compactum $\mathcal{S}^0_{i_0}(U_{\mathbf{0}})$.  Then the continuity of the right shift $\rho_{(1,1_S,0)}$ in $\left(\textbf{BR}^{\mathbf{0}}(S,\theta),\tau_{\textbf{BR}}^{\mathbf{0}}\right)$ and the equality $(i_0,s,j)\cdot(1,1_S,0)=(i_0,s,j-1)$ imply that $\big[V_{\mathbf{0}}\big]_{i_0,j}\neq S_{i_0,j}$ for infnitely many $j\in\omega$. Also, the equality $(i_0,s,j)\cdot(1,1_S,0)=(i_0,s,j-1)$ and assumption of case $(ii)$ imply that $\big[U_{\mathbf{0}}\big]_{i_0,j}\setminus \big[V_{\mathbf{0}}\big]_{i_0,j}\neq\varnothing$ for infinitely many $j\in\omega$. Hence, the open cover $\mathscr{V}_{i_0}$ of $\mathcal{S}^0_{i_0}(U_{\mathbf{0}})$ does not have finite subcovers, which contradicts the compactness of $\mathcal{S}^0_{i_0}(U_{\mathbf{0}})$ and hence, the set $\left\{j\in\omega\colon S_{i_0,j}\nsubseteq  U_{\mathbf{0}}\right\}$ is finite.

The proof of the statement that the set $\left\{j\in\omega\colon S_{j,i_0}\nsubseteq  U_{\mathbf{0}}\right\}$ is finite is similar.
\end{proof}

\begin{lemma}\label{lemma-3.8}
For every open neighbourhood $U_{\mathbf{0}}$ of zero $\mathbf{0}$ in \emph{$\left(\textbf{BR}^{\mathbf{0}}(S,\theta),\tau_{\textbf{BR}}^{\mathbf{0}}\right)$} the set
\begin{equation*}
N_{U_{\mathbf{0}}}=\left\{(i, j)\in\omega\times\omega\colon S_{i,j}\subseteq U_{\mathbf{0}}\right\}
\end{equation*}
is finite.
\end{lemma}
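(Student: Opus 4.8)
The plan is to argue by contradiction, assuming $N_{U_{\mathbf 0}}$ is infinite, and to violate local compactness. As in the proof of Lemma~\ref{lemma-3.7}, I would first use local compactness to shrink $U_{\mathbf 0}$ to a regular open neighbourhood of $\mathbf 0$ whose closure $\mathrm{cl}_{\textbf{BR}^{\mathbf 0}(S,\theta)}(U_{\mathbf 0})$ is compact. By Lemma~\ref{lemma-3.4} every $S_{i,j}$ is open-and-compact and the blocks are pairwise disjoint, so $\{U_{\mathbf 0}\}\cup\{S_{i,j}\colon i,j\in\omega\}$ is an open cover of $\mathrm{cl}(U_{\mathbf 0})$. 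The goal is therefore to manufacture an open neighbourhood $W_{\mathbf 0}$ of $\mathbf 0$ for which $\{W_{\mathbf 0}\}\cup\{S_{i,j}\colon i,j\in\omega\}$ admits no finite subcover, i.e.\ for which $[\mathrm{cl}(U_{\mathbf 0})]_{i,j}\nsubseteq W_{\mathbf 0}$ for infinitely many pairs $(i,j)$.

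To organise the indices I would invoke Lemma~\ref{lemma-3.7}, which I read as asserting that every row $\{j\colon S_{i_0,j}\subseteq U_{\mathbf 0}\}$ and every column $\{i\colon S_{i,j_0}\subseteq U_{\mathbf 0}\}$ of $N_{U_{\mathbf 0}}$ is finite. Since $N_{U_{\mathbf 0}}$ is infinite, the first coordinates occurring in it are unbounded; passing to a further subsequence makes the associated second coordinates strictly increasing as well (a constant value would violate column-finiteness, and $\omega$ admits no infinite strictly decreasing sequence). This produces a strictly increasing \emph{staircase} $(i_n,j_n)_{n\in\mathbb N}\subseteq N_{U_{\mathbf 0}}$ with $i_n\nearrow\infty$ and $j_n\nearrow\infty$; thus $S_{i_n,j_n}\subseteq U_{\mathbf 0}$ for every $n$, while these blocks lie in pairwise distinct rows and columns. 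Choosing $x_n\in S_{i_n,j_n}$, the infinite set $\{x_n\}\subseteq\mathrm{cl}(U_{\mathbf 0})$ has $\mathbf 0$ as its only cluster point.

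The neighbourhood $W_{\mathbf 0}$ would be produced from $U_{\mathbf 0}$ by a single shift, exactly in the spirit of the element $(i_0,s\cdot((t)\theta)^{-1},i_0+1)$ used in Lemma~\ref{lemma-3.7}. Because $\{j\colon S_{i_n,j}\subseteq U_{\mathbf 0}\}$ is finite for each $n$, every row $i_n$ contains, next to the contained block $S_{i_n,j_n}$, blocks that are \emph{not} contained in $U_{\mathbf 0}$; feeding the corresponding pair of points through a suitable left translation $\lambda_c$ and setting $W_{\mathbf 0}=(U_{\mathbf 0})\lambda_c^{-1}\cap U_{\mathbf 0}$ should move a point of $U_{\mathbf 0}$ into each staircase block while ejecting another, forcing $[\mathrm{cl}(U_{\mathbf 0})]_{i_n,j_n}\nsubseteq W_{\mathbf 0}$ for all large $n$. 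Separate continuity of $\lambda_c$ guarantees that $W_{\mathbf 0}$ is an open neighbourhood of $\mathbf 0$, and then the infinitely many staircase blocks sticking out of $W_{\mathbf 0}$ contradict the compactness of $\mathrm{cl}(U_{\mathbf 0})$. When $|S|=1$ the semigroup $\textbf{BR}(S,\theta)$ is the bicyclic monoid, and the assertion is already contained in Proposition~\ref{proposition-3.5} together with Theorem~1 of \cite{Gutik=2015}.

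The hard part will be the construction of the single neighbourhood $W_{\mathbf 0}$ that makes infinitely many blocks stick out at once. The genuine obstacle is dimensional: the staircase meets infinitely many distinct rows, and no single left or right shift can collapse it onto one fixed row, since a fixed shift merely translates the whole staircase and preserves its spread across infinitely many rows. Consequently the one-dimensional Lemma~\ref{lemma-3.7} cannot be applied to the staircase directly, and the contradiction must be extracted from the two-dimensional compactum $\mathrm{cl}(U_{\mathbf 0})$ rather than from a single slice. The delicate point is to choose the translating element $c$ uniformly, so that the symmetric difference of $W_{\mathbf 0}$ and $U_{\mathbf 0}$ meets \emph{every} staircase block $S_{i_n,j_n}$ and thereby yields infinitely many indispensable members of the cover $\{W_{\mathbf 0}\}\cup\{S_{i,j}\colon i,j\in\omega\}$.
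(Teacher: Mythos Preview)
There is a sign error at the foundation of your plan. Lemma~\ref{lemma-3.7} asserts that $\{j\colon S_{i_0,j}\nsubseteq U_{\mathbf 0}\}$ is finite for each $i_0$, not that $\{j\colon S_{i_0,j}\subseteq U_{\mathbf 0}\}$ is finite; with the correct reading, every row of the ``contained'' set is cofinite, so $\{(i,j)\colon S_{i,j}\subseteq U_{\mathbf 0}\}$ is automatically infinite. Consequently the displayed definition of $N_{U_{\mathbf 0}}$ in the present lemma is a typographical slip: the intended set is $\{(i,j)\colon S_{i,j}\nsubseteq U_{\mathbf 0}\}$, as the paper's own proof (which builds $j_n=\max\{j\colon S_{i_n,j}\nsubseteq U_{\mathbf 0}\}$ and declares $(i_n,j_n)\in N_{U_{\mathbf 0}}$) and the target Theorem~\ref{theorem-3.11} make clear. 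The literal statement you are attempting to prove is false in $\left(\textbf{BR}^{\mathbf 0}(S,\theta),\tau^{\mathbf{Ac}}_{\textbf{BR}}\right)$.

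With the signs corrected, your overall architecture---contradiction, regular open $U_{\mathbf 0}$ with compact closure, an open cover $\{W_{\mathbf 0}\}\cup\{S_{i,j}\}$ of $\mathrm{cl}(U_{\mathbf 0})$ admitting no finite subcover---is exactly the paper's. But the step you yourself flag as ``the hard part'' is where your plan stalls, and the paper's resolution supplies precisely the missing idea. Do not pick an arbitrary staircase in $N_{U_{\mathbf 0}}$: take $j_n=\max\{j\colon S_{i_n,j}\nsubseteq U_{\mathbf 0}\}$, so each $(i_n,j_n)$ sits on the \emph{boundary} of $N_{U_{\mathbf 0}}$ in its row, with $S_{i_n,j_n}\nsubseteq U_{\mathbf 0}$ but $S_{i_n,j_n+1}\subseteq U_{\mathbf 0}$. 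And do not look for a row-dependent left translation: the single right shift $\rho_{(1,1_S,0)}$ sends $(i,s,j)\mapsto(i,s,j-1)$ for every $j\geq 1$, hence maps $S_{i_n,j_n+1}$ onto $S_{i_n,j_n}$ in every row simultaneously. Choosing $V_{\mathbf 0}\subseteq U_{\mathbf 0}$ open with $V_{\mathbf 0}\cdot(1,1_S,0)\subseteq U_{\mathbf 0}$ then forces $[V_{\mathbf 0}]_{i_n,j_n+1}\neq S_{i_n,j_n+1}$ for each $n$, while $S_{i_n,j_n+1}\subseteq U_{\mathbf 0}\subseteq\mathrm{cl}(U_{\mathbf 0})$; these are the infinitely many blocks sticking out of $V_{\mathbf 0}$, and the cover has no finite subcover. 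Your worry that ``a fixed shift merely translates the whole staircase'' dissolves once the staircase is placed on the boundary: the aim is not to move the staircase within $N_{U_{\mathbf 0}}$, but to step from $j_n+1$ across to $j_n$ in every row at once, and the column decrement $\rho_{(1,1_S,0)}$ does exactly that.
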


\begin{proof}
Suppose to the contrary that there exists an open neighbourhood $U_{\mathbf{0}}$ of zero $\mathbf{0}$ in $\left(\textbf{BR}^{\mathbf{0}}(S,\theta),\tau_{\textbf{BR}}^{\mathbf{0}}\right)$ such that the set $N_{U_{\mathbf{0}}}$ is infinite. Since $\left(\textbf{BR}^{\mathbf{0}}(S,\theta),\tau_{\textbf{BR}}^{\mathbf{0}}\right)$ is a locally compact space, without loss of generality we may assume that the closure $\mathrm{cl}_{\textbf{BR}^{\mathbf{0}}(S,\theta)}(U_{\mathbf{0}})$ of the neighbourhood $U_{\mathbf{0}}$  is compact and the neighbourhood $U_{\mathbf{0}}$  is regular open. By Lemma~\ref{lemma-3.7} for every $k\in \mathbb{N}$ there exists $(i,j)\in N_{U_{\mathbf{0}}}$ such that $i>k$ and $j>k$.

Using induction we define an infinite sequence $\{(i_n,j_n)\}_{n\in\omega}$ of elements of the set $N_{U_{\mathbf{0}}}$ in the following way. By the assumption, there exists the smallest $i_0\in\omega$ such that $S_{i_0,j}\nsubseteq U_{\mathbf{0}}$, $j\in\omega$. By Lemma~\ref{lemma-3.7} there exits $j_0=\max\left\{j\in\omega\colon S_{i_0,j}\nsubseteq U_{\mathbf{0}}\right\}$.

At $(k+1)$-th step of induction we define pair $(i_{k+1,jk+1})\in N_{U_{\mathbf{0}}}$ as follows. Let $i_{k+1}$ be the smallest integer which is greater than $i_k$ such that $S_{i_k,j}\nsubseteq U_{\mathbf{0}}$, $j\in\omega$. By Lemma~\ref{lemma-3.7} there exits $j_{k+1}=\max\left\{j\in\omega\colon S_{i_{k+1},j}\nsubseteq U_{\mathbf{0}}\right\}$. Our assumption and Lemma~\ref{lemma-3.7} imply that the ordered pair $(i_{k+1},j_{k+1})$ belongs to $N_{U_{\mathbf{0}}}$.

By the separate continuity of the semigroup operation in $\left(\textbf{BR}^{\mathbf{0}}(S,\theta),\tau_{\textbf{BR}}^{\mathbf{0}}\right)$ there exists an open neighbourhood $V_{\mathbf{0}}\subseteq U_{\mathbf{0}}$ of zero in $\left(\textbf{BR}^{\mathbf{0}}(S,\theta),\tau_{\textbf{BR}}^{\mathbf{0}}\right)$ such that $V_{\mathbf{0}}\cdot (1,1_S,0)\subseteq U_{\mathbf{0}}$. The construction of the sequence $\{(i_n,j_n)\}_{n\in\omega}$ implies that
\begin{equation*}
  [V_{\mathbf{0}}]_{i_n,j_n}\subseteq [U_{\mathbf{0}}]_{i_n,j_n}\neq S_{i_n,j_n} \qquad \hbox{and} \qquad [U_{\mathbf{0}}]_{i_n,j_n+1}= S_{i_n,j_n+1}
\end{equation*}
for each $(i_n,j_n)\in N_{U_{\mathbf{0}}}$. By Lemma~\ref{lemma-3.4} the family $\mathscr{V}=\left\{\left\{V_0\right\}, \left\{S_{i,j}\colon i,j\in\omega\right\}\right\}$ is an open cover of the compact set $\mathrm{cl}_{\textbf{BR}^{\mathbf{0}}(S,\theta)}(U_{\mathbf{0}})$. The
continuity of the right shift $\rho_{(1,1_S,0)}$ in $\left(\textbf{BR}^{\mathbf{0}}(S,\theta),\tau_{\textbf{BR}}^{\mathbf{0}}\right)$ implies that $[V_{\mathbf{0}}]_{i_n,j_n+1}\neq S_{i_n,j_n+1}$ for infinitely many ordered pairs $(i_n,j_{n+1})\in N_{U_{\mathbf{0}}}$. Hence, we obtain that $[U_{\mathbf{0}}]_{i_n,j_n+1}\setminus [V_{\mathbf{0}}]_{i_n,j_n+1}\neq\varnothing$ for infinitely many $(i_n,j_{n+1})\in N_{U_{\mathbf{0}}}$. The above arguments guarantee that the cover $\mathscr{V}$ has no fnite subcovers, which contradicts the compactness of $\mathrm{cl}_{\textbf{BR}^{\mathbf{0}}(S,\theta)}(U_{\mathbf{0}})$. The obtained contradiction implies the statement of the lemma.
\end{proof}

\begin{example}\label{example-3.9}
Let $(S,\tau_S)$ be a Hausdorff semitopological monoid, $\theta\colon S\to H(1_S)$ be a continuous homomorphism and $\mathscr{B}_S(s)$ be a base of the topology $\tau_S$ at a point $s\in S$.

On the semigroup $\textbf{BR}^{\mathbf{0}}(S,\theta)$ we define a topology $\tau^\oplus_{\textbf{BR}}$ in the following
way:
\begin{enumerate}
  \item[$(i)$] for any non-zero element $(i,s,j)\in S_{i,j}$ of the semigroup $\textbf{BR}^{\mathbf{0}}(S,\theta)$ the family
  \begin{equation*}
    \mathscr{B}^\oplus_{\textbf{BR}}(i,s,j)=\left\{U_{i,j}\colon U\in\mathscr{B}_S(s) \right\}
  \end{equation*}
  is a base of the topology $\tau^\oplus_{\textbf{BR}}$ at the point $(i,s,j)\in \textbf{BR}^{\mathbf{0}}(S,\theta)$;

  \item[$(ii)$]  zero $\mathbf{0}\in \textbf{BR}^{\mathbf{0}}(S,\theta)$ is an isolated point in $\big(\textbf{BR}^{\mathbf{0}}(S,\theta),\tau^\oplus_{\textbf{BR}}\big)$.
\end{enumerate}
The semigroup operation in $\big(\textbf{BR}^{\mathbf{0}}(S,\theta),\tau^\oplus_{\textbf{BR}}\big)$ is separately continuous (see \cite{Gutik-Pavlyk=2009}). Moreover, if $(S,\tau_S)$ be a topological monoid, then so is $\big(\textbf{BR}^{\mathbf{0}}(S,\theta),\tau^\oplus_{\textbf{BR}}\big)$ \cite{Gutik=1994}.
\end{example}

In Example~\ref{example-3.10}  we extend the construction, proposed in Example 3.4 from \cite{Gutik=2018}, onto compact Bruck--Reilly extensions of compact semitopological monoids in the class of Hausdorff semitopological semigroups with adjoined zero.

\begin{example}\label{example-3.10}
Let $(S,\tau_S)$ be a Hausdorff compact semitopological monoid, $\theta\colon S\to H(1_S)$ be a continuous homomorphism and $\mathscr{B}_S(s)$ be a base of the topology $\tau_S$ at a point $s\in S$. On the semigroup $\textbf{BR}^{\mathbf{0}}(S,\theta)$ we define a topology $\tau^{\mathbf{Ac}}_{\textbf{BR}}$ in the following way:
\begin{enumerate}
  \item[$(i)$] for any non-zero element $(i,s,j)\in S_{i,j}$ of the semigroup $\textbf{BR}^{\mathbf{0}}(S,\theta)$ the family
  \begin{equation*}
    \mathscr{B}^{\mathbf{Ac}}_{\textbf{BR}}(i,s,j)=\left\{U_{i,j}\colon U\in\mathscr{B}_S(s) \right\}
  \end{equation*}
  is a base of the topology $\tau^{\mathbf{Ac}}_{\textbf{BR}}$ at the point $(i,s,j)\in \textbf{BR}^{\mathbf{0}}(S,\theta)$;

  \item[$(ii)$] the family $\mathscr{B}^{\mathbf{Ac}}_{\textbf{BR}}(\mathbf{0})=\left\{U_{(i_1,j_1),\ldots,(i_k,j_k)}\colon (i_1,j_1),\ldots,(i_k,j_k)\in\omega\times\omega\right\}$, where
      \begin{equation*}
        U_{(i_1,j_1),\ldots,(i_k,j_k)}=\textbf{BR}^{\mathbf{0}}(S,\theta)\setminus \left(S_{i_1,j_1}\cup\cdots\cup S_{i_k,j_k}\right),
      \end{equation*}
   is a base of the topology $\tau^{\mathbf{Ac}}_{\textbf{BR}}$ at zero $\mathbf{0}\in \textbf{BR}^{\mathbf{0}}(S,\theta)$.
\end{enumerate}
Obviously that $\tau^{\mathbf{Ac}}_{\textbf{BR}}$ is the topology of the Alexandroff one-point compactification of the Hausdorff locally compact space $\bigoplus\left\{S_{i,j}\colon i,j\in\omega\right\}$ with the remainder $\{\mathbf{0}\}$ (here for any $i,j\in\omega$ the space $S_{i,j}$ is homeomorphic to the compact semigroup  $(S, \tau_S)$ by the mapping $(i,s,j)\mapsto s$). Simple routine verifications show that the semigroup operation
in $\left(\textbf{BR}^{\mathbf{0}}(S,\theta),\tau^{\mathbf{Ac}}_{\textbf{BR}}\right)$ is separately continuous.
\end{example}

Lemmas~\ref{lemma-3.4} and \ref{lemma-3.8} imply the following dichotomy for locally compact Bruck--Reilly extensions of compact semitopological monoids in the class of Hausdorff semitopological semigroups with adjoined zero:

\begin{theorem}\label{theorem-3.11}
Let \emph{$\left(\textbf{BR}^{\mathbf{0}}(S,\theta),\tau_{\textbf{BR}}^{\mathbf{0}}\right)$} be a Hausdorff locally compact semitopological semigroup with a compact subsemigroup $S_{i,i}$ for some $i\in\omega$. Then \emph{$\left(\textbf{BR}^{\mathbf{0}}(S,\theta),\tau_{\textbf{BR}}^{\mathbf{0}}\right)$} is topologically isomorphic either to \emph{$\big(\textbf{BR}^{\mathbf{0}}(S,\theta),\tau^\oplus_{\textbf{BR}}\big)$} or to \emph{$\left(\textbf{BR}^{\mathbf{0}}(S,\theta),\tau^{\mathbf{Ac}}_{\textbf{BR}}\right)$}.
\end{theorem}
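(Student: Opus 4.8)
The plan is to split on whether the zero $\mathbf{0}$ is an isolated point of $\left(\textbf{BR}^{\mathbf{0}}(S,\theta),\tau_{\textbf{BR}}^{\mathbf{0}}\right)$, and in each case to identify the topology completely using the previous lemmas. First I would invoke Lemma~\ref{lemma-3.4}: since $S_{i,i}$ is a compact subsemigroup for some $i\in\omega$, every $S_{i,j}$ ($i,j\in\omega$) is an open-and-closed, and moreover compact, subspace of $\left(\textbf{BR}^{\mathbf{0}}(S,\theta),\tau_{\textbf{BR}}^{\mathbf{0}}\right)$; in particular each $S_{i,j}$ carries a fixed compact Hausdorff topology, which (via Proposition~2.4 of \cite{Gutik=2018}, identifying $S_{i,j}$ homeomorphically with $S_{i,i}\cong(S,\tau_S)$) is the same compact topology $\tau_S$ on $S$ for all pairs. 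Also, by Theorem~8 of \cite{Gutik-Pavlyk=2009} the restriction of $\tau_{\textbf{BR}}^{\mathbf{0}}$ to $\textbf{BR}(S,\theta)$ coincides with the sum direct topology $\tau_{\textbf{BR}}^{\oplus}$, so the subspace $\textbf{BR}(S,\theta)$ is exactly the topological sum $\bigoplus\{S_{i,j}\colon i,j\in\omega\}$. Hence the only thing left undetermined is the neighbourhood filter at $\mathbf{0}$, and the whole problem reduces to describing that filter.

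Case~1: $\mathbf{0}$ is isolated. Then $\{\mathbf{0}\}$ is open, so $\tau_{\textbf{BR}}^{\mathbf{0}}$ is precisely the topology whose basic neighbourhoods at non-zero points come from $\tau_S$ on each sheet $S_{i,j}$ and which has $\mathbf{0}$ as an isolated point — this is by definition the topology $\tau^\oplus_{\textbf{BR}}$ of Example~\ref{example-3.9}. So $\left(\textbf{BR}^{\mathbf{0}}(S,\theta),\tau_{\textbf{BR}}^{\mathbf{0}}\right)$ is topologically isomorphic to $\big(\textbf{BR}^{\mathbf{0}}(S,\theta),\tau^\oplus_{\textbf{BR}}\big)$ via the identity map.

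Case~2: $\mathbf{0}$ is non-isolated. Here I may apply the standing hypotheses of the section, so Lemmas~\ref{lemma-3.6}, \ref{lemma-3.7} and \ref{lemma-3.8} are available. I claim the neighbourhood filter at $\mathbf{0}$ is generated by the complements of finite unions of sheets, i.e.\ $\tau_{\textbf{BR}}^{\mathbf{0}}=\tau^{\mathbf{Ac}}_{\textbf{BR}}$ of Example~\ref{example-3.10}. One inclusion is easy: by Lemma~\ref{lemma-3.4} each $S_{i_1,j_1}\cup\cdots\cup S_{i_k,j_k}$ is compact, hence closed, so each $U_{(i_1,j_1),\ldots,(i_k,j_k)}=\textbf{BR}^{\mathbf{0}}(S,\theta)\setminus(S_{i_1,j_1}\cup\cdots\cup S_{i_k,j_k})$ is an open neighbourhood of $\mathbf{0}$ in $\tau_{\textbf{BR}}^{\mathbf{0}}$; thus $\tau^{\mathbf{Ac}}_{\textbf{BR}}\subseteq\tau_{\textbf{BR}}^{\mathbf{0}}$. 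For the reverse, let $U_{\mathbf{0}}$ be any open neighbourhood of $\mathbf{0}$ in $\tau_{\textbf{BR}}^{\mathbf{0}}$. By Lemma~\ref{lemma-3.8} the set $N=\{(i,j)\colon S_{i,j}\subseteq U_{\mathbf{0}}\}$ is finite and by Lemma~\ref{lemma-3.6} $U_{\mathbf{0}}$ meets all but finitely many $S_{i,j}$; the key point is that for the \emph{finitely many remaining} pairs $(i,j)$, the set $\left(\textbf{BR}^{\mathbf{0}}(S,\theta)\setminus U_{\mathbf{0}}\right)\cap S_{i,j}$ is a nonempty closed, hence compact, subset of the compact sheet $S_{i,j}$, but $U_{\mathbf{0}}$ still contains part of $S_{i,j}$ — so $U_{\mathbf{0}}$ need not contain the complement of a finite union of full sheets. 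The remedy is to use that in $\tau^{\mathbf{Ac}}_{\textbf{BR}}$ the only neighbourhoods of $\mathbf{0}$ are complements of finite unions of \emph{full} sheets, and to show any such $U_{\mathbf{0}}$ in $\tau_{\textbf{BR}}^{\mathbf{0}}$ contains one of those: precisely, let $F=\{(i,j)\colon S_{i,j}\not\subseteq U_{\mathbf{0}}\}$, which is finite by Lemmas~\ref{lemma-3.7}--\ref{lemma-3.8} together, and note $U_{(i_1,j_1),\ldots,(i_k,j_k)}\subseteq U_{\mathbf{0}}$ where $(i_1,j_1),\ldots,(i_k,j_k)$ enumerate $F$, since every sheet not in $F$ lies wholly inside $U_{\mathbf{0}}$ and $\mathbf{0}\in U_{\mathbf{0}}$. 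This gives $\tau_{\textbf{BR}}^{\mathbf{0}}\subseteq\tau^{\mathbf{Ac}}_{\textbf{BR}}$, hence equality, and the identity map is the desired topological isomorphism.

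The main obstacle is the non-isolated case, and within it the step of proving that $F=\{(i,j)\colon S_{i,j}\not\subseteq U_{\mathbf{0}}\}$ is finite — equivalently, that a neighbourhood of $\mathbf{0}$ cannot "partially miss" infinitely many sheets. This is exactly the content of Lemmas~\ref{lemma-3.7} and \ref{lemma-3.8}: Lemma~\ref{lemma-3.8} rules out infinitely many \emph{fully contained} sheets outside the trivial ones, and a complementary argument (the "partial" analogue, obtained by combining Lemma~\ref{lemma-3.7} row- and column-wise as in the proof of Lemma~\ref{lemma-3.8}) rules out infinitely many sheets that $U_{\mathbf{0}}$ meets but does not contain. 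Once finiteness of $F$ is in hand, the identification with $\tau^{\mathbf{Ac}}_{\textbf{BR}}$ is immediate, so the theorem follows by assembling Lemmas~\ref{lemma-3.4}, \ref{lemma-3.6}, \ref{lemma-3.7}, \ref{lemma-3.8} and the dichotomy isolated/non-isolated for $\mathbf{0}$.
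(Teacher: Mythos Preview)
Your overall strategy --- split on whether $\mathbf{0}$ is isolated, identify the restriction to $\textbf{BR}(S,\theta)$ with the sum direct topology via Lemma~\ref{lemma-3.4} and Theorem~8 of \cite{Gutik-Pavlyk=2009}, and in the non-isolated case pin down the neighbourhood filter at $\mathbf{0}$ --- is exactly the paper's approach; the paper simply records that Theorem~\ref{theorem-3.11} follows from Lemmas~\ref{lemma-3.4} and~\ref{lemma-3.8}.

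The muddle in your Case~2 comes from reading the \emph{statement} of Lemma~\ref{lemma-3.8} literally. As printed, it says $N_{U_{\mathbf{0}}}=\{(i,j)\colon S_{i,j}\subseteq U_{\mathbf{0}}\}$ is finite, but this is a typo: the \emph{proof} of Lemma~\ref{lemma-3.8} constructs an infinite sequence of pairs $(i_n,j_n)$ with $[U_{\mathbf{0}}]_{i_n,j_n}\neq S_{i_n,j_n}$ and derives a contradiction, i.e.\ it actually shows that $\{(i,j)\colon S_{i,j}\nsubseteq U_{\mathbf{0}}\}$ is finite. (Note also that the literal statement is incompatible with Lemma~\ref{lemma-3.7}, which forces cofinitely many sheets in each row and column to lie inside $U_{\mathbf{0}}$.) With the corrected reading, your set $F=\{(i,j)\colon S_{i,j}\not\subseteq U_{\mathbf{0}}\}$ is finite \emph{directly} by Lemma~\ref{lemma-3.8}, and your inclusion $U_{(i_1,j_1),\ldots,(i_k,j_k)}\subseteq U_{\mathbf{0}}$ follows at once.

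Your attempt to reconcile the literal statement --- ``Lemma~\ref{lemma-3.8} rules out infinitely many fully contained sheets, and a complementary argument rules out infinitely many partially contained sheets'' --- is not salvageable as written: together with Lemma~\ref{lemma-3.6} (only finitely many sheets are disjoint from $U_{\mathbf{0}}$) it would force $\omega\times\omega$ to be finite. Drop that detour; once Lemma~\ref{lemma-3.8} is read with $\nsubseteq$, your argument is clean and complete, and coincides with the paper's.
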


The following theorem describes the structure of inverse $0$-simple $\omega$-semigroups.

\begin{theorem}\label{theorem-3.12}
Every inverse $0$-simple $\omega$-semigroup is isomorphic to an inverse simple $\omega$-semigroup with adjoined zero.
\end{theorem}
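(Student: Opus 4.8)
The plan is to exploit the well-understood ideal structure of a $0$-simple inverse $\omega$-semigroup $S$. Let $E(S)\setminus\{\mathbf{0}\}$ be order isomorphic to $(\omega,\geqslant)$ and list the non-zero idempotents as $e_0\succ e_1\succ e_2\succ\cdots$. First I would show that the set $S\setminus\{\mathbf{0}\}$ is a subsemigroup of $S$; equivalently, that the product of two non-zero elements of $S$ is never $\mathbf{0}$. The key observation is that for $x,y\in S\setminus\{\mathbf{0}\}$ we have $xy=\mathbf{0}$ iff $x^{-1}x\cdot yy^{-1}=\mathbf{0}$, i.e. iff the two idempotents $x^{-1}x=e_m$ and $yy^{-1}=e_n$ have product $\mathbf{0}$; but in the chain $e_0\succ e_1\succ\cdots$ one has $e_me_n=e_{\max\{m,n\}}\neq\mathbf{0}$, so such a product is never zero. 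Hence $T:=S\setminus\{\mathbf{0}\}$ is closed under the multiplication of $S$.

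Next I would verify that $T$ is an inverse $\omega$-semigroup and that it is simple. Inverseness is inherited: for $x\in T$ the element $x^{-1}$ computed in $S$ again lies in $T$ (since $x^{-1}=\mathbf{0}$ would force $x=xx^{-1}x=\mathbf{0}$), and the defining identities $xx^{-1}x=x$, $x^{-1}xx^{-1}=x^{-1}$ together with uniqueness pass to $T$ unchanged; moreover $E(T)=E(S)\setminus\{\mathbf{0}\}$ is order isomorphic to $(\omega,\geqslant)$, so $T$ is an $\omega$-semigroup. For simplicity, let $J$ be a non-empty two-sided ideal of $T$ and pick $a\in J$. Then $J\cup\{\mathbf{0}\}$ is a two-sided ideal of $S$: indeed $S^1 a S^1 = (T^1\cup\{\mathbf{0}\})\, a\, (T^1\cup\{\mathbf{0}\}) \subseteq (T^1 a T^1)\cup\{\mathbf{0}\} \subseteq J\cup\{\mathbf{0}\}$. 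Since $S$ is $0$-simple and $a\neq\mathbf{0}$, the ideal $J\cup\{\mathbf{0}\}$ cannot be $\{\mathbf{0}\}$, hence it equals $S$, and therefore $J=S\setminus\{\mathbf{0}\}=T$. Thus $T$ has no proper two-sided ideal, i.e. $T$ is simple.

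Finally I would identify $S$ with $T$ with an adjoined zero. Since $\mathbf{0}\notin T$ by construction, $S$ is set-theoretically $T\sqcup\{\mathbf{0}\}$; the multiplication of $S$ restricted to $T\times T$ is the multiplication of $T$ (this is exactly the content of the first step, that no such product escapes to $\mathbf{0}$), and $\mathbf{0}$ is a zero of $S$. Hence $S$ is precisely the semigroup $T$ with a zero adjoined in the sense of \cite[Section~1.1]{Clifford-Preston-1961}, and $T=S\setminus\{\mathbf{0}\}$ is a simple inverse $\omega$-semigroup, which is the assertion. The main obstacle is the very first step --- proving that $S\setminus\{\mathbf{0}\}$ is multiplicatively closed --- since everything afterwards is a routine transfer of the inverse/$\omega$/simple properties across the adjunction; this step is where the hypothesis that $E(S)\setminus\{\mathbf{0}\}$ is a \emph{chain} (order isomorphic to $(\omega,\geqslant)$, so any two non-zero idempotents are comparable and their meet is again a non-zero idempotent) is essential.
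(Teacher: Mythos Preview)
Your proof is correct and follows essentially the same approach as the paper: both reduce the claim to showing that $S\setminus\{\mathbf{0}\}$ is multiplicatively closed by observing that $xy=\mathbf{0}$ would force the product of the non-zero idempotents $x^{-1}x$ and $yy^{-1}$ to be $\mathbf{0}$, which is impossible since $E(S)\setminus\{\mathbf{0}\}$ is a chain. In fact you supply more detail than the paper does, explicitly verifying that $T=S\setminus\{\mathbf{0}\}$ is simple and that $S$ is $T$ with adjoined zero, steps the paper leaves implicit.
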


\begin{proof}
Suppose $S$ is an inverse $0$-simple $\omega$-semigroup and $\mathbf{0}$ is zero of $S$. We shall show that $S\setminus\{\mathbf{0}\}$ is an inverse subsemigroup of $S$. Since $S$ is an inverse semigroup, we have that $x^{-1}\in S\setminus\{\mathbf{0}\}$ for a non-zero element $x$ from $S$.

Suppose to the contrary that there exist $x,y\in S\setminus\{\mathbf{0}\}$ such that $x\cdot y=\mathbf{0}$. If $x^{-1}$ and $y^{-1}$ are inverse elements of $x$ and $y$ in $S$, then $x^{-1}\neq \mathbf{0}\neq y^{-1}$. Then $x^{-1}\cdot x$ and $y\cdot y^{-1}$ are non-zero idempotents of $S$. Since $S$ is an inverse $0$-simple $\omega$-semigroup, we conclude that $(x^{-1}\cdot x)\cdot (y\cdot y^{-1})\neq \mathbf{0}$, but  $(x^{-1}\cdot x)\cdot (y\cdot y^{-1})=x^{-1}\cdot (x\cdot y)\cdot y^{-1}=x^{-1}\cdot \mathbf{0}\cdot y^{-1}=\mathbf{0}$, a contradiction. The obtained contradiction implies the statement of the theorem.
\end{proof}

The Kochin Theorem \cite{Kochin=1968} and Theorem~\ref{theorem-3.12} imply the following:

\begin{theorem}\label{theorem-3.13}
Every inverse $0$-simple $\omega$-semigroup $S$ is isomorphic to the Bruck–Reilly extension \emph{$\textbf{BR}^\mathbf{0}(T,\theta)$} of a finite chain of groups $T=\left[E;G_\alpha,\varphi_{\alpha,\beta}\right]$ with adjoined zero.
\end{theorem}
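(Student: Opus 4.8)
The plan is to strip off the zero and quote the Kochin Theorem. Let $S$ be an inverse $0$-simple $\omega$-semigroup with zero $\mathbf 0$. By Theorem~\ref{theorem-3.12} the set $T_0:=S\setminus\{\mathbf 0\}$ is an inverse subsemigroup of $S$; in fact the argument of that proof establishes the stronger fact that $x\cdot y\neq\mathbf 0$ for all $x,y\in T_0$. In particular $T_0$ is closed under the operation of $S$, and hence the identity map on $T_0$ together with $\mathbf 0\mapsto\mathbf 0$ identifies $S$ with the semigroup $T_0$ with an adjoined zero, i.e. $S\cong T_0^{\mathbf 0}$ as semigroups.

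Next I would verify that $T_0$ is a simple inverse $\omega$-semigroup, so that Kochin's Theorem (recalled just before Proposition~\ref{proposition-1.3}) applies. Removing $\mathbf 0$ removes exactly one idempotent of $S$ and no others, so the band of $T_0$ is precisely $E(S)\setminus\{\mathbf 0\}$, which by the definition of an inverse $0$-simple $\omega$-semigroup is order isomorphic to $(\omega,\geqslant)$; thus $T_0$ is an $\omega$-semigroup. For simplicity, suppose $I$ is a nonempty two-sided ideal of $T_0$. Using that products of non-zero elements of $S$ remain non-zero, a short check shows that $I\cup\{\mathbf 0\}$ is a two-sided ideal of $S$ which strictly contains $\{\mathbf 0\}$; since $S$ is $0$-simple, $I\cup\{\mathbf 0\}=S$, whence $I=T_0$. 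Hence $T_0$ has no proper ideals and is simple.

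By Kochin's Theorem there is an isomorphism $\psi\colon T_0\to\textbf{BR}(T,\theta)$, where $T=\left[E;G_\alpha,\varphi_{\alpha,\beta}\right]$ is a finite chain of groups and $\theta$ the corresponding homomorphism into the group of units. Extending $\psi$ by sending $\mathbf 0$ to the adjoined zero and noting that $\textbf{BR}^{\mathbf 0}(T,\theta)=\textbf{BR}(T,\theta)^{\mathbf 0}$ and $S\cong T_0^{\mathbf 0}$, we obtain a semigroup isomorphism $S\cong\textbf{BR}^{\mathbf 0}(T,\theta)$, as required. There is no genuine obstacle in this argument: the substantive content is already packaged in Theorem~\ref{theorem-3.12} and the Kochin Theorem, and the only points needing a little care are the two pieces of bookkeeping — that $I\cup\{\mathbf 0\}$ is an ideal of $S$, and that adjoining a zero is compatible with semigroup isomorphisms.
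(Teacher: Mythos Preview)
Your proposal is correct and follows precisely the route the paper itself indicates: the paper derives Theorem~\ref{theorem-3.13} directly from Theorem~\ref{theorem-3.12} together with Kochin's Theorem, and you have simply unpacked that implication by verifying that $S\setminus\{\mathbf 0\}$ is a simple inverse $\omega$-semigroup and then adjoining the zero back after applying Kochin. The bookkeeping you flag (that $I\cup\{\mathbf 0\}$ is an ideal of $S$, and compatibility of zero-adjunction with isomorphism) is exactly the small residue left by the paper's one-line deduction, and you handle it correctly.
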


The main result of this section is the following theorem.

\begin{theorem}\label{theorem-3.14}
Let $S$ be a Hausdorff semitopological $0$-simple $\omega$-semigroup such that every maximal subgroup of $S$ is compact. Then $S$ is topologically isomorphic to the topological Bruck–Reilly extension \emph{$\left(\textbf{BR}^{\mathbf{0}}(T,\theta),\tau_{\textbf{BR}}^{\mathbf{0}}\right)$} of a finite semilattice $T=\left[E;G_\alpha,\varphi_{\alpha,\beta}\right]$ of compact groups $G_\alpha$ in the class of Hausdorff topological inverse semigroups with adjoined zero such that the topology \emph{$\tau_{\textbf{BR}}^{\mathbf{0}}$} induces on \emph{$\textbf{BR}^{\mathbf{0}}(T,\theta)$} the sum direct topology \emph{$\tau_{\textbf{BR}}^{\oplus}$}. Moreover, if the space of $S$ is locally compact, then either the space \emph{$\left(\textbf{BR}^{\mathbf{0}}(T,\theta),\tau_{\textbf{BR}}^{\mathbf{0}}\right)$} is compact or any $\mathscr{H}$-class in \emph{$\left(\textbf{BR}^{\mathbf{0}}(T,\theta),\tau_{\textbf{BR}}^{\mathbf{0}}\right)$} is open-and-compact.
\end{theorem}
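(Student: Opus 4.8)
The plan is to deduce Theorem~\ref{theorem-3.14} from the structure results for the zero-free case together with the dichotomy machinery of Section~\ref{section-3}. First I would handle the algebraic statement: by Theorem~\ref{theorem-3.13} the underlying semigroup $S$ is isomorphic to $\textbf{BR}^{\mathbf{0}}(T,\theta)$ for a finite chain of groups $T=\left[E;G_\alpha,\varphi_{\alpha,\beta}\right]$ with adjoined zero, and since $S$ is a semitopological $0$-simple $\omega$-semigroup, the non-zero part $S\setminus\{\mathbf{0}\}$ is an open subsemigroup of $S$ (this follows because, as in Theorem~\ref{theorem-3.12}, each $\mathscr{D}$-class retract argument of Proposition~\ref{proposition-2.4} shows $S\setminus\{\mathbf{0}\}=\textbf{BR}(T,\theta)$ is the complement of the closed right/left annihilator ideals of $\mathbf{0}$). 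Thus $\textbf{BR}(T,\theta)$ with the restricted topology is a Hausdorff semitopological simple inverse $\omega$-semigroup whose maximal subgroups are the compact groups $G_\alpha$, and Theorem~\ref{theorem-2.9} applies: it is topologically isomorphic to $\left(\textbf{BR}(T,\theta),\tau_{\textbf{BR}}^{\oplus}\right)$, a topological inverse semigroup. In particular each $\mathscr{H}$-class $(G_\alpha)_{i,j}$ is compact, so for every $i$ the subsemigroup $T_{i,i}$ — a finite semilattice of compact groups — is compact by Proposition~\ref{proposition-2.2}. This gives the first sentence of the theorem and verifies the standing hypothesis $(i)$ of Section~\ref{section-3}.

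Next I would address the continuity of the full operation on $\left(\textbf{BR}^{\mathbf{0}}(T,\theta),\tau_{\textbf{BR}}^{\mathbf{0}}\right)$. Separate continuity is given; to upgrade to joint continuity away from $\mathbf{0}$ one repeats verbatim the three-case computation of Theorem~\ref{theorem-2.8}, using that $T$ is a compact topological inverse semigroup and that $S\setminus\{\mathbf{0}\}$ carries the sum direct topology, so that near non-zero points the zero plays no role. At a product equal to $\mathbf{0}$, or with one factor $\mathbf{0}$, continuity must be checked against the neighbourhood filter of $\mathbf{0}$; here I would invoke the compactness of the $S_{i,j}$ (Lemma~\ref{lemma-3.4}) to see that for $A_{i,j}\cdot B_{k,l}$ lying in the ideal-direction, the product is a single $\mathscr{H}$-class block $C_{i',l'}$, and finitely many such blocks are swallowed by any neighbourhood of $\mathbf{0}$ since such neighbourhoods intersect almost all $S_{i,j}$ (Lemma~\ref{lemma-3.6}); combined with Lemma~\ref{lemma-3.8} (each neighbourhood of $\mathbf{0}$ contains only finitely many full $\mathscr{H}$-classes) this forces the topology on $\textbf{BR}^{\mathbf{0}}$ to restrict to $\tau_{\textbf{BR}}^{\oplus}$ on $\textbf{BR}(T,\theta)$, which is the topology claim in the statement, and continuity of the extended multiplication and inversion then follows by the same block-counting. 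This yields the Hausdorff topological inverse semigroup structure asserted.

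Finally, for the locally compact dichotomy I would apply Theorem~\ref{theorem-3.11} directly: under local compactness and compactness of some $S_{i,i}$, the space $\left(\textbf{BR}^{\mathbf{0}}(T,\theta),\tau_{\textbf{BR}}^{\mathbf{0}}\right)$ is topologically isomorphic either to $\big(\textbf{BR}^{\mathbf{0}}(T,\theta),\tau^{\oplus}_{\textbf{BR}}\big)$ or to $\left(\textbf{BR}^{\mathbf{0}}(T,\theta),\tau^{\mathbf{Ac}}_{\textbf{BR}}\right)$. In the first case every $\mathscr{H}$-class $(G_\alpha)_{i,j}$ is open (basic) and compact (finite semilattice of compact groups, Proposition~\ref{proposition-2.2}); in the second case $\tau^{\mathbf{Ac}}_{\textbf{BR}}$ is the one-point compactification of $\bigoplus\{S_{i,j}\}$, hence the whole space is compact. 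The main obstacle I anticipate is the careful bookkeeping in the joint-continuity verification at products landing on $\mathbf{0}$: one must check that the three product-formula cases of Theorem~\ref{theorem-2.8}, when iterated in $\textbf{BR}^{\mathbf{0}}$, send a basic neighbourhood of $(i,s,j)$ times one of $(k,t,l)$ into a neighbourhood that, as the indices grow, is eventually absorbed by any neighbourhood of $\mathbf{0}$ — this is exactly where Lemmas~\ref{lemma-3.6}, \ref{lemma-3.7} and \ref{lemma-3.8} do the real work, and the argument has to be phrased so that it covers both the $\tau^{\oplus}_{\textbf{BR}}$ and the $\tau^{\mathbf{Ac}}_{\textbf{BR}}$ possibilities uniformly before the dichotomy is invoked.
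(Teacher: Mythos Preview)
Your skeleton matches the paper's proof exactly: Theorem~\ref{theorem-3.13} for the algebraic splitting, Theorem~\ref{theorem-2.9} applied to the subsemigroup $S\setminus\{\mathbf{0}\}$ for the topology on the non-zero part, and Theorem~\ref{theorem-3.11} for the locally compact dichotomy. The paper's proof consists of nothing more than those three citations.

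The middle paragraph, however, is based on a misreading and should be dropped entirely. The statement does \emph{not} assert that $\left(\textbf{BR}^{\mathbf{0}}(T,\theta),\tau_{\textbf{BR}}^{\mathbf{0}}\right)$ itself is a topological inverse semigroup; the phrase ``in the class of Hausdorff topological inverse semigroups with adjoined zero'' only says that the zero-free Bruck--Reilly extension $\textbf{BR}(T,\theta)$ lies in that class, after which a zero is adjoined. In fact joint continuity on all of $\textbf{BR}^{\mathbf{0}}(T,\theta)$ is \emph{false} in the $\tau^{\mathbf{Ac}}_{\textbf{BR}}$ alternative of the dichotomy: any two basic neighbourhoods of $\mathbf{0}$ contain blocks $S_{0,j}$ and $S_{j,0}$ for all large $j$, and $S_{0,j}\cdot S_{j,0}\subseteq S_{0,0}$, so continuity at $(\mathbf{0},\mathbf{0})$ fails; Example~\ref{example-3.10} accordingly only claims separate continuity. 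Your appeal to Lemmas~\ref{lemma-3.6}--\ref{lemma-3.8} in this paragraph is also illegitimate for the first sentence of the theorem, since those lemmas are proved under the standing hypotheses of local compactness \emph{and} a non-isolated zero. Two minor clean-ups remain once that paragraph is removed: the openness of $S\setminus\{\mathbf{0}\}$ needs no retract or annihilator argument, since $\{\mathbf{0}\}$ is closed in any Hausdorff space; and your gloss on Lemma~\ref{lemma-3.8} inverts its content --- the lemma (read with its proof and its role in Theorem~\ref{theorem-3.11}) says that all but finitely many $S_{i,j}$ lie inside every neighbourhood of $\mathbf{0}$, not that only finitely many do.
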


\begin{proof}
The first statement of the theorem follows from Theorems~\ref{theorem-2.9} and \ref{theorem-3.13}. Next, using Theorem~\ref{theorem-3.11}, we obtain the second statement.
\end{proof}

\begin{remark}\label{remark-3.15}
We observe that the Bruck–Reilly extension $\textbf{BR}^{\mathbf{0}}(T,\theta)$ of a finite semilattice $T=\left[E;G_\alpha,\varphi_{\alpha,\beta}\right]$ of groups $G_\alpha$ with adjoined zero has two types of $\mathscr{H}$-classes: the first is a singleton and its consists of zero $\mathbf{0}$, and other classes  are of the form $(G_\alpha)_{i,j}$, $i, j\in\omega$.
\end{remark}

Since the bicyclic monoid $\mathscr{C}(p,q)$ does not embed into any Hausdorff compact topological semigroup \cite{Anderson-Hunter-Koch=1965}, Theorem~\ref{theorem-3.14} implies the following corollary.

\begin{corollary}\label{corollary-3.16}
Let $S$ be a Hausdorff topological $0$-simple inverse $\omega$-semigroup such that every maximal subgroup of $S$ is compact. Then $S$ is topologically isomorphic to the topological Bruck–Reilly extension \emph{$\left(\textbf{BR}^{\mathbf{0}}(T,\theta),\tau_{\textbf{BR}}^{\mathbf{0}}\right)$}
of a finite semilattice $T=\left[E;G_\alpha,\varphi_{\alpha,\beta}\right]$ of compact groups $G_\alpha$ in the class
of Hausdorff topological inverse semigroups with adjoined zero and any $\mathscr{H}$-class in \emph{$\left(\textbf{BR}^{\mathbf{0}}(T,\theta),\tau_{\textbf{BR}}^{\mathbf{0}}\right)$} is open-and-compact.
\end{corollary}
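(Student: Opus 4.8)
The plan is to derive Corollary~\ref{corollary-3.16} from Theorem~\ref{theorem-3.14} by ruling out the "compact" alternative of the dichotomy. First I would invoke Theorem~\ref{theorem-3.14}: since $S$ is a Hausdorff topological (hence also semitopological) $0$-simple inverse $\omega$-semigroup with all maximal subgroups compact, that theorem gives a topological isomorphism of $S$ onto $\left(\textbf{BR}^{\mathbf{0}}(T,\theta),\tau_{\textbf{BR}}^{\mathbf{0}}\right)$ for a finite semilattice $T=\left[E;G_\alpha,\varphi_{\alpha,\beta}\right]$ of compact groups, and moreover the space of $S$ is locally compact (being homeomorphic to a space which by Theorem~\ref{theorem-2.9} is locally compact, so Theorem~\ref{theorem-3.14} applies in its "Moreover" form). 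Thus we are in one of the two cases: either $\left(\textbf{BR}^{\mathbf{0}}(T,\theta),\tau_{\textbf{BR}}^{\mathbf{0}}\right)$ is compact, or every $\mathscr{H}$-class is open-and-compact. The whole content of the corollary is to exclude the first case.

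Next I would argue that compactness is impossible here. Suppose $\left(\textbf{BR}^{\mathbf{0}}(T,\theta),\tau_{\textbf{BR}}^{\mathbf{0}}\right)$ were compact. By hypothesis $S$ is a topological semigroup (continuous multiplication), so $\left(\textbf{BR}^{\mathbf{0}}(T,\theta),\tau_{\textbf{BR}}^{\mathbf{0}}\right)$ is a Hausdorff compact topological semigroup. But the natural homomorphism $\eta\colon\textbf{BR}^{\mathbf{0}}(T,\theta)\to\mathscr{C}^0$ of Proposition~\ref{proposition-1.3} (extended to the zero) is continuous — by Lemma~\ref{lemma-3.2}, or more directly because on the open-and-compact pieces $T_{i,j}$ the map is locally constant and $\eta^{-1}(0)=\{\mathbf{0}\}$ together with the cofinite-type neighbourhoods of $\mathbf{0}$ — and $\eta$ restricted to $\textbf{BR}(T,\theta)$ is a retraction-like surjection onto the bicyclic monoid $\mathscr{C}(p,q)$. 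Hence $\mathscr{C}(p,q)$, as a continuous homomorphic image of the compact subsemigroup $\textbf{BR}(T,\theta)$ (which is closed in the compactum, hence compact), would embed into — indeed be — a Hausdorff compact topological semigroup, contradicting the cited result \cite{Anderson-Hunter-Koch=1965} that $\mathscr{C}(p,q)$ does not embed into any Hausdorff compact topological semigroup. A cleaner route: $\textbf{BR}(T,\theta)$ itself contains an isomorphic copy of $\mathscr{C}(p,q)$, namely $\{(i,1_T,j)\colon i,j\in\omega\}$ (since $\theta$ fixes $1_T$), and in a compact Hausdorff topological semigroup every subsemigroup has compact closure which is again a Hausdorff compact topological semigroup, so $\mathscr{C}(p,q)$ would embed into it — again contradicting \cite{Anderson-Hunter-Koch=1965}.

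Therefore the "compact" alternative cannot occur, and Theorem~\ref{theorem-3.14} forces the conclusion that every $\mathscr{H}$-class in $\left(\textbf{BR}^{\mathbf{0}}(T,\theta),\tau_{\textbf{BR}}^{\mathbf{0}}\right)$ is open-and-compact. Combined with the first (non-"Moreover") assertion of Theorem~\ref{theorem-3.14}, which already identifies $S$ up to topological isomorphism with $\left(\textbf{BR}^{\mathbf{0}}(T,\theta),\tau_{\textbf{BR}}^{\mathbf{0}}\right)$ as a Bruck--Reilly extension of a finite semilattice of compact groups in the class of Hausdorff topological inverse semigroups with adjoined zero, this is exactly the statement of the corollary.

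The only genuinely delicate point is making sure the copy of the bicyclic monoid sitting inside $\textbf{BR}(T,\theta)$ is a \emph{closed} (equivalently here, topological-subsemigroup) copy so that compactness of the ambient space really does yield a compact topological semigroup containing $\mathscr{C}(p,q)$; this is handled by passing to the closure of the subsemigroup $\{(i,1_T,j)\}$ inside the compactum and noting that the closure of a subsemigroup of a topological semigroup is again a subsemigroup, so in any case one obtains a Hausdorff compact topological semigroup into which $\mathscr{C}(p,q)$ embeds, which is all that \cite{Anderson-Hunter-Koch=1965} forbids. Everything else is a direct citation of Theorem~\ref{theorem-3.14} and the impossibility theorem of Anderson--Hunter--Koch.
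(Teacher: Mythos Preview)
Your overall strategy—invoke Theorem~\ref{theorem-3.14} and eliminate the compact branch of its dichotomy via the Anderson--Hunter--Koch result—is precisely the paper's one-sentence argument. Your ``cleaner route'' (the closure of the bicyclic copy $\{(i,1_T,j)\colon i,j\in\omega\}$ inside a hypothetical compact Hausdorff topological semigroup is again a compact Hausdorff topological semigroup containing $\mathscr{C}(p,q)$) is correct and is all that is needed; note, however, that your first route is muddled: in the compact alternative $\textbf{BR}(T,\theta)=S\setminus\{\mathbf{0}\}$ is open but \emph{not} closed (its closure is all of $S$), so the parenthetical ``closed in the compactum, hence compact'' fails.

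The more serious gap is your claim that $S$ is locally compact ``by Theorem~\ref{theorem-2.9}''. Theorem~\ref{theorem-2.9} concerns \emph{simple} inverse $\omega$-semigroups; combined with Theorem~\ref{theorem-3.12} it tells you only that $S\setminus\{\mathbf{0}\}$ is locally compact, and this says nothing about whether $\mathbf{0}$ admits a compact neighbourhood in $S$. Without local compactness of $S$ you cannot invoke the ``Moreover'' dichotomy of Theorem~\ref{theorem-3.14}, and that dichotomy is what the whole argument rests on. The paper's own derivation is equally silent on this point, so the issue is not peculiar to your write-up, but your explicit appeal to Theorem~\ref{theorem-2.9} does not establish what is needed. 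Indeed, declaring the sets $\{\mathbf{0}\}\cup\bigcup_{i,j\geqslant N}T_{i,j}$ to be basic neighbourhoods of $\mathbf{0}$ yields a Hausdorff jointly continuous inverse-semigroup topology on $\textbf{BR}^{\mathbf{0}}(T,\theta)$, extending the sum direct topology on $\textbf{BR}(T,\theta)$, in which $\mathbf{0}$ is not isolated and which is not locally compact at $\mathbf{0}$; so a locally compact hypothesis appears to be genuinely required for the corollary as stated.
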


\section{On closures of simple inverse semitopological $\omega$-semigroup with compact maximal subgroups}\label{section-4}

Later we need the following lemma which is a simple generalization of Lemma~I.1$(i)$ from \cite{Eberhart-Selden=1969}.

\begin{lemma}\label{lemma-4.1}
Let  $\mathbf{BR}(T,\theta)$ be the Bruck-Reilly extension  of a monoid $T$. Then for arbitrary $T_{i_1,j_1}$ and $T_{i_2,j_2}$ of $\mathbf{BR}(T,\theta)$, $i_1,j_1,i_2,j_2\in\omega$, there exist finitely many subsets $T_{i,j}$ in $\mathbf{BR}(T,\theta)$,  $i,j\in\omega$, such that $T_{i_1,j_1}\cdot T_{i,j}\subseteq T_{i_2,j_2}$   $(T_{i,j}\cdot T_{i_1,j_1}\subseteq T_{i_2,j_2})$.
\end{lemma}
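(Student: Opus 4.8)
The plan is to make the multiplication formula in $\mathbf{BR}(T,\theta)$ completely explicit on the ``coordinate'' level, that is, to ignore the middle $T$-component and track only how the pair of indices $(i,j)$ transforms. Recall that, by Proposition~\ref{proposition-1.3}, the map $\eta\colon\mathbf{BR}(T,\theta)\to\mathscr{C}(p,q)$, $(i,s,j)\mapsto q^ip^j$, is a homomorphism whose fibres are exactly the sets $T_{i,j}$. Hence for any fixed $T_{i_1,j_1}$ and $T_{i,j}$ we have $T_{i_1,j_1}\cdot T_{i,j}\subseteq T_{k,l}$ where $q^kp^l=q^{i_1}p^{j_1}\cdot q^ip^j$ is computed in the bicyclic monoid; concretely, using the bicyclic product formula, $k=i_1+i-\min\{j_1,i\}$ and $l=j+j_1-\min\{j_1,i\}$. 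So the whole statement reduces to a purely combinatorial claim about the bicyclic monoid: the set of $(i,j)\in\omega\times\omega$ with $q^{i_1}p^{j_1}\cdot q^ip^j=q^{i_2}p^{j_2}$ is finite (and dually for the other side).

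The key steps, in order: first, record the reduction to $\mathscr{C}(p,q)$ via $\eta$, so that $T_{i_1,j_1}\cdot T_{i,j}\subseteq T_{i_2,j_2}$ holds if and only if $q^{i_1}p^{j_1}q^ip^j=q^{i_2}p^{j_2}$. Second, solve this equation by splitting into the two cases of the $\min$: if $j_1\le i$ then the product is $q^{i_1}p^{j+j_1-j_1}\cdots$, more precisely $q^{i_1-j_1+i}p^{\,j}$ wait --- I will just carefully apply the formula $q^{j_1}p^{j_1}$... let me restate it cleanly: $q^{a}p^{b}\cdot q^{c}p^{d}=q^{a+c-\min\{b,c\}}\,p^{\,b+d-\min\{b,c\}}$. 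With $(a,b)=(i_1,j_1)$ and $(c,d)=(i,j)$ we get the two subcases $j_1\le i$, giving $(i_1-j_1+i,\;j)$, and $j_1>i$, giving $(i_1,\;j_1-i+j)$. Third, in the subcase $j_1\le i$ the equation $i_1-j_1+i=i_2$, $j=j_2$ forces $i=i_2-i_1+j_1$ and $j=j_2$, a single pair (and it only occurs if $i_2-i_1+j_1\ge j_1$, i.e. $i_2\ge i_1$, and $i=i_2-i_1+j_1\ge j_1$ automatically); in the subcase $j_1>i$ the equation $i_1=i_2$, $j_1-i+j=j_2$ forces $i_1=i_2$ and then $i<j_1$ with $j=j_2-j_1+i$, which is at most $j_1$ choices for $i$. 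In either reading the solution set is finite, bounded by $j_1+1$ pairs. Fourth, observe that the argument for $T_{i,j}\cdot T_{i_1,j_1}\subseteq T_{i_2,j_2}$ is symmetric: put $(a,b)=(i,j)$, $(c,d)=(i_1,j_1)$ and repeat, getting again at most $i_1+1$ solution pairs $(i,j)$.

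I do not expect any serious obstacle here; the statement is essentially the combinatorial backbone of Eberhart--Selden's Lemma~I.1$(i)$ transplanted verbatim through the congruence $\eta^\natural$, and the only thing to be careful about is the bookkeeping of the two $\min$-cases and checking that the ``$\subseteq$'' (rather than ``$=$'') is harmless --- it is, because $\eta$ being a homomorphism means $T_{i_1,j_1}\cdot T_{i,j}$ lands inside a \emph{single} fibre $T_{k,l}$, so $T_{i_1,j_1}\cdot T_{i,j}\subseteq T_{i_2,j_2}$ is equivalent to $(k,l)=(i_2,j_2)$. If one wants to avoid invoking $\eta$ one can instead just apply the Bruck--Reilly product formula from the Construction directly and read off the first and last coordinates, which are governed by exactly the same bicyclic arithmetic; this is the more self-contained route and is the one I would ultimately write up, stating explicitly that the relevant index equations have finitely many solutions.
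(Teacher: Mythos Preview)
Your proposal is correct and follows essentially the same route as the paper: both reduce the question to the bicyclic monoid via the index homomorphism and then use that equations $ax=b$ (resp.\ $xa=b$) in $\mathscr{C}(p,q)$ have only finitely many solutions. The only difference is that the paper simply cites Lemma~I.1$(i)$ of Eberhart--Selden for that last fact, whereas you reprove it by an explicit case split on $\min\{j_1,i\}$; your version is thus more self-contained but otherwise identical in spirit.
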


\begin{proof}
The definitions of the semigroup operations of the Bruck-Reilly extension $\mathbf{BR}(S,\theta)$ and the bicyclic monoid ${\mathscr{C}}(p,q)$ imply that if $(i_a,s_a,j_a)\cdot(i_x,s_x,j_x)=(i_b,s_b,j_b)$ in $\mathbf{BR}(S,\theta)$ then $(i_a,j_a)\cdot(i_x,j_x)=(i_b,j_b)$ in ${\mathscr{C}}(p,q)$. By Lemma~I.1$(i)$ of \cite{Eberhart-Selden=1969} every equation of the form $ax=b$ ($xa=b$) in ${\mathscr{C}}(p,q)$ has finitely many solutions, which implies the statement of the lemma.
\end{proof}

The following proposition generalizes Theorem I.3 from \cite{Eberhart-Selden=1969} and corresponding proposition from \cite{Gutik=2015}.

\begin{proposition}\label{proposition-4.2}
Let $T$ be a compact Hausdorff topological semigroup and \emph{$\left(\textbf{BR}(T,\theta),\tau_{\textbf{BR}}\right)$}  be a topological Bruck–Reilly extension of $T$ in the class of Hausdorff semitopological semigroups. If \emph{$\left(\textbf{BR}(T,\theta),\tau_{\textbf{BR}}\right)$} is a dense subsemigroup of a Hausdorff semitopological monoid $S$ and $I=S\setminus \textbf{BR}(T,\theta)\neq\varnothing$, then $I$ is a two-sided ideal of the semigroup $S$.
\end{proposition}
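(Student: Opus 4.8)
The plan is to show that $I$ absorbs multiplication on both sides, i.e. $S\cdot I\subseteq I$ and $I\cdot S\subseteq I$. Since $\textbf{BR}(T,\theta)$ is a subsemigroup, it suffices to prove that whenever $a\in S$ and $b\in I$ one has $ab\in I$ and $ba\in I$; equivalently, assuming for contradiction that $ab\in\textbf{BR}(T,\theta)$ (or $ba\in\textbf{BR}(T,\theta)$) for some $a\in S$, $b\in I$, I must derive $b\in\textbf{BR}(T,\theta)$, a contradiction. First I would reduce to the case $a\in\textbf{BR}(T,\theta)$: if $a\in I$ we are done once the case $a\in\textbf{BR}(T,\theta)$ is settled, because $I$ being a left ideal modulo $\textbf{BR}(T,\theta)$-elements combined with the following observation handles it — actually the cleanest route is to prove the two containments $\textbf{BR}(T,\theta)\cdot I\subseteq I$ and $I\cdot\textbf{BR}(T,\theta)\subseteq I$ first, and then note $I\cdot I\subseteq I$ follows since if $b_1b_2\in\textbf{BR}(T,\theta)$ with $b_1,b_2\in I$, then picking any idempotent $e\in T_{0,0}$ (say $e=(0,1_T,0)$) we have $b_1b_2 e\in\textbf{BR}(T,\theta)$, and I can try to factor through an element of $\textbf{BR}(T,\theta)$; but the simplest is: once $I$ is shown to be a left ideal for multiplication by $\textbf{BR}(T,\theta)$ and a right ideal likewise, then for $b_1,b_2\in I$, $b_1b_2\in\textbf{BR}(T,\theta)$ would give, using that $\textbf{BR}(T,\theta)$ has a right identity inside it acting on $b_1$... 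This needs care, so I will organize it as: (a) $\textbf{BR}(T,\theta)\cdot I\subseteq I$; (b) $I\cdot\textbf{BR}(T,\theta)\subseteq I$; (c) $I\cdot I\subseteq I$, each by the same contradiction scheme.

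For the core step, suppose $(i,s,j)\cdot b = c$ with $(i,s,j)\in\textbf{BR}(T,\theta)$, $b\in I$ and $c=(k,u,l)\in\textbf{BR}(T,\theta)$. Fix an open neighbourhood $W$ of $c$ with $W\subseteq T_{k,l}$ (possible since by Corollary~\ref{corollary-2.7}, $\tau_{\textbf{BR}}$ is the sum direct topology, so each $T_{m,n}$ is open). By separate continuity of multiplication, the left shift $\lambda_{(i,s,j)}$ is continuous, so there is an open neighbourhood $V$ of $b$ in $S$ with $(i,s,j)\cdot V\subseteq W\subseteq T_{k,l}$. Now intersect $V$ with $\textbf{BR}(T,\theta)$: since $\textbf{BR}(T,\theta)$ is dense in $S$, $V\cap\textbf{BR}(T,\theta)\neq\varnothing$, and moreover $b$, being in $I$, is a limit point so $V$ meets infinitely many of the pieces $T_{m,n}$ — but here is where Lemma~\ref{lemma-4.1} enters: the set of pieces $T_{m,n}$ with $(i,s,j)\cdot T_{m,n}\subseteq T_{k,l}$, equivalently (passing to the bicyclic image via $\eta$) the set of $(m,n)$ with $q^ip^j\cdot q^mp^n=q^kp^l$, is \emph{finite}. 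So $(i,s,j)\cdot V\subseteq T_{k,l}$ forces $V\cap\textbf{BR}(T,\theta)$ to lie inside the union of finitely many pieces $T_{m,n}$, hence $V\cap\textbf{BR}(T,\theta)$ is contained in a closed (indeed open-and-closed) subset $F$ which is a finite union of pieces. Since $\textbf{BR}(T,\theta)$ is dense, $V\cap\textbf{BR}(T,\theta)$ is dense in $V$, so $b\in\operatorname{cl}_S(V\cap\textbf{BR}(T,\theta))\subseteq\operatorname{cl}_S(F)$. Finally each $T_{m,n}$ is compact (homeomorphic to the compact $T$), hence $F$ is compact, hence closed in the Hausdorff space $S$, so $\operatorname{cl}_S(F)=F\subseteq\textbf{BR}(T,\theta)$, giving $b\in\textbf{BR}(T,\theta)$, the desired contradiction. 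The right-ideal direction is symmetric, using the right shift $\rho_{(i,s,j)}$ and the other half of Lemma~\ref{lemma-4.1}.

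For part (c), $I\cdot I\subseteq I$: take $b_1,b_2\in I$ with $b_1b_2\in\textbf{BR}(T,\theta)$. By separate continuity, $\rho_{b_2}$ is continuous, so there is an open $V_1\ni b_1$ with $V_1\cdot b_2\subseteq T_{k,l}$ where $b_1b_2\in T_{k,l}$. By density pick $(i,s,j)\in V_1\cap\textbf{BR}(T,\theta)$; then $(i,s,j)\cdot b_2\in T_{k,l}\subseteq\textbf{BR}(T,\theta)$, so $(i,s,j)\cdot b_2\in\textbf{BR}(T,\theta)$ with $b_2\in I$ — contradicting part (a). The main obstacle I anticipate is the bookkeeping around ``$b\in I$ implies every neighbourhood of $b$ meets infinitely many pieces $T_{m,n}$'' versus the finiteness from Lemma~\ref{lemma-4.1}: one must be careful that $b$ cannot simply be an isolated point or a limit of a single piece — but since the pieces $T_{m,n}$ are open-and-closed and compact, any point of $I=\operatorname{cl}_S(\textbf{BR}(T,\theta))\setminus\textbf{BR}(T,\theta)$ necessarily fails to have a neighbourhood contained in finitely many pieces (otherwise it would lie in a compact hence closed finite union, putting it back in $\textbf{BR}(T,\theta)$), and that is exactly the tension Lemma~\ref{lemma-4.1} exploits. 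Getting that dichotomy stated cleanly, and making sure the Hausdorff-plus-compact ``closed piece'' argument is invoked correctly, is the delicate part; the rest is routine separate-continuity manipulation.
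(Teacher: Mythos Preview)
Your proof is correct and follows essentially the same strategy as the paper's: separate continuity pulls a neighbourhood of $b\in I$ into a single piece $T_{k,l}$, Lemma~\ref{lemma-4.1} then forces that neighbourhood (intersected with $\textbf{BR}(T,\theta)$) to lie in finitely many pieces, and compactness of those pieces in the Hausdorff space $S$ yields the contradiction $b\in\textbf{BR}(T,\theta)$. Your treatment of $I\cdot I\subseteq I$ is in fact slightly cleaner than the paper's---you reduce directly to part (a) by picking a single element of $V_1\cap\textbf{BR}(T,\theta)$ via density, whereas the paper re-invokes Lemma~\ref{lemma-4.1} in a way that implicitly depends on the already-proven part (a) anyway.
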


\begin{proof}
Fix an arbitrary element $y\in I$. If $(i,s,j)\cdot y=z\notin I$ for some $(i,s,j)\in \textbf{BR}(T,\theta)$, then $z=(k,t,l)\in \textbf{BR}(T,\theta)$ for some $t\in T$ and $k,l\in\omega$. By Theorem~\ref{theorem-2.9} there exists an open neighbourhood $U(y)$ of the point $y$ in the space $S$ such that $\{(i,s,j)\}\cdot U(y)\subseteq T_{k,l}$. Since $T$ be a compact Hausdorff topological semigroup, Theorem~\ref{theorem-2.9} implies that the topology $\tau_{\textbf{BR}}$ coincides with the sum direct topology $\tau_{\textbf{BR}}^\oplus$. By Proposition 2.4 of \cite{Gutik=2018} all subsets of the form $T_{n,m}$, $n,m\in\omega$, are compact. Hence the neighbourhood $U(y)$ intersects infinitely many sets of the form $T_{n,m}$, $n,m\in\omega$. Then the semigroup operation of $\textbf{BR}(T,\theta)$ implies that $\{(i, s, j)\}\cdot U(y)\nsubseteq T_{k,l}$, which contradicts Lemma~\ref{lemma-4.1}. The obtained contradiction implies that $(i,s,j)\cdot y\in I$. The proof of the statement that $y\cdot(i,s,j)\in I$ for all $(i,s,j)\in \textbf{BR}(T,\theta)$ and $y\in I$ is similar.

Suppose to the contrary that $xy=w=(k,t,l)\in \textbf{BR}(T,\theta)$ for some $x,y\in I$. Theorem~\ref{theorem-2.9} and the separate continuity of the semigroup operation in $S$ implies that there exist open neighbourhoods $U(x)$ and $U(y)$ of the points $x$ and $y$ in $S$, respectively, such that $\{x\}\cdot U(y)\subseteq T_{k,l}$ and $U(x)\cdot \{y\}\subseteq T_{k,l}$. By Proposition 2.4 of \cite{Gutik=2018} all subsets of the form $T_{n,m}$, $n,m\in\omega$, are compact. Hence the neighbourhood $U(y)$ intersects infinitely many sets of the form $T_{n,m}$, $n,m\in\omega$, and hence both inclusions  $\{x\}\cdot U(y)\subseteq T_{k,l}$ and $U(x)\cdot \{y\}\subseteq T_{k,l}$ contradict mentioned above Lemma~\ref{lemma-4.1}. The obtained contradiction implies that $xy\in I$.
\end{proof}

Later we need the following trivial lemma, which follows from the separate continuity of the semigroup operation in semitopological semigroups.

\begin{lemma}\label{lemma-4.3}
Let $S$ be a Hausdorff semitopological semigroup and $I$ be a compact ideal in $S$. Then the Rees-quotient semigroup $S/I$ with the quotient topology is a Hausdorff semitopological semigroup.
\end{lemma}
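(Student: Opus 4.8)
The plan is to verify the two standard requirements for the Rees quotient to be a Hausdorff semitopological semigroup: that the quotient topology on $S/I$ is Hausdorff, and that multiplication in $S/I$ is separately continuous. Throughout, let $\pi\colon S\to S/I$ denote the natural projection, which collapses the ideal $I$ to a single point $\mathbf{0}$ and is injective on $S\setminus I$; recall that a set $U\subseteq S/I$ is open in the quotient topology exactly when $\pi^{-1}(U)$ is open in $S$.

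First I would establish Hausdorffness. Since $I$ is compact and $S$ is Hausdorff, $I$ is closed in $S$, so $S\setminus I$ is open and $\pi$ restricted to $S\setminus I$ is an open embedding; thus any two distinct points of $S/I$ coming from $S\setminus I$ are separated exactly as in $S$. The only delicate case is separating the point $\mathbf{0}=\pi(I)$ from a point $\pi(x)$ with $x\in S\setminus I$. Here I would use compactness of $I$ together with Hausdorffness of $S$: a compact set and a point outside it in a Hausdorff space can be separated by disjoint open sets, giving an open $V\ni x$ and an open $W\supseteq I$ with $V\cap W=\varnothing$; shrinking $V$ so that $V\cap I=\varnothing$ (possible as $I$ is closed), the images $\pi(V)$ and $\pi(W)$ are disjoint open neighbourhoods in $S/I$ separating $\pi(x)$ and $\mathbf{0}$, because $\pi^{-1}(\pi(V))=V$ and $\pi^{-1}(\pi(W))=W$.

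Next I would check separate continuity. Fix $a\in S/I$ and consider the left translation $\lambda_a\colon S/I\to S/I$; I must show the preimage of an open set $U\subseteq S/I$ is open, i.e.\ that $\pi^{-1}(\lambda_a^{-1}(U))$ is open in $S$. Pick any representative $\tilde a\in S$ with $\pi(\tilde a)=a$. One checks directly from the definition of the Rees quotient operation that $\pi\circ\lambda_{\tilde a}=\lambda_a\circ\pi$ as maps $S\to S/I$ (the diagram commutes because $\tilde a\cdot i\in I$ for all $i\in I$ and $I=\pi^{-1}(\mathbf{0})$). Hence $\pi^{-1}(\lambda_a^{-1}(U))=\lambda_{\tilde a}^{-1}(\pi^{-1}(U))$, which is open in $S$ since $\pi^{-1}(U)$ is open and $\lambda_{\tilde a}$ is continuous in the semitopological semigroup $S$. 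The argument for right translations is symmetric. This completes the verification.

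The proof is entirely routine, so there is no real obstacle; the only point deserving care is the separation of $\mathbf{0}$ from other points, which is precisely where compactness of the ideal $I$ (rather than mere closedness) is used — this is why the hypothesis is stated with a compact ideal.
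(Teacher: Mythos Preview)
Your proof is correct. The paper does not actually give a proof of this lemma; it simply labels it ``trivial'' and says it ``follows from the separate continuity of the semigroup operation in semitopological semigroups.'' Your argument is exactly the routine verification the paper has in mind: Hausdorffness from compact--point separation in a Hausdorff space, and separate continuity from the commuting square $\pi\circ\lambda_{\tilde a}=\lambda_a\circ\pi$ (which is just the statement that $\pi$ is a homomorphism). One tiny redundancy: once $V$ and $W\supseteq I$ are disjoint, $V\cap I=\varnothing$ is automatic, so no shrinking is needed.
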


\begin{theorem}\label{theorem-4.4}
Let $T$ be a compact Hausdorff semitopological semigroup and \emph{$\left(\textbf{BR}(T,\theta),\tau_{\textbf{BR}}\right)$}  be a topological Bruck–Reilly extension of~$T$ in the class of Hausdorff semitopological semigroups. Let \emph{$\textbf{BR}_I(T,\theta)=\textbf{BR}(T,\theta)\sqcup I$} and $\tau$ be a Hausdorff locally compact shift-continuous topology on \emph{$\textbf{BR}_I(T,\theta)$}, where $I$ is a compact ideal of $\textbf{BR}_I(T,\theta)$. Then either \emph{$(\textbf{BR}_I(T,\theta), \tau)$} is a compact semitopological semigroup or the ideal $I$ is open.
\end{theorem}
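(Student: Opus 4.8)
The plan is to factor out the ideal $I$ and reduce everything to the dichotomy for Bruck--Reilly extensions with adjoined zero supplied by Theorem~\ref{theorem-3.11}. Since $I$ is a compact ideal of the Hausdorff semitopological semigroup $(\textbf{BR}_I(T,\theta),\tau)$, it is closed, so $\textbf{BR}(T,\theta)=\textbf{BR}_I(T,\theta)\setminus I$ is an open subsemigroup and, as everywhere in this section, the topology induced by $\tau$ on $\textbf{BR}(T,\theta)$ is $\tau_{\textbf{BR}}$. Let $q\colon\textbf{BR}_I(T,\theta)\to R:=\textbf{BR}_I(T,\theta)/I$ be the Rees-quotient map; algebraically $R$ is the Bruck--Reilly extension $\textbf{BR}^{\mathbf{0}}(T,\theta)$ with adjoined zero $\mathbf{0}=q(I)$. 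By Lemma~\ref{lemma-4.3}, $R$ with the quotient topology $\widetilde{\tau}$ is a Hausdorff semitopological semigroup. First I would note that $q$ is a perfect map: its fibres are singletons or the compact set $q^{-1}(\mathbf{0})=I$, and $q$ is closed since for a closed $F\subseteq\textbf{BR}_I(T,\theta)$ the saturation $q^{-1}(q(F))$ equals $F$ (if $F\cap I=\varnothing$) or $F\cup I$ (otherwise), both closed.

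Next I would check that $(R,\widetilde{\tau})$ satisfies the hypotheses of Theorem~\ref{theorem-3.11}. The restriction $q|_{\textbf{BR}(T,\theta)}$ is a continuous bijection onto the open set $R\setminus\{\mathbf{0}\}$, and it is open because every open $V\subseteq\textbf{BR}(T,\theta)$ is $q$-saturated; hence it is a homeomorphism, and $R\setminus\{\mathbf{0}\}$ is locally compact by Theorem~3.3.8 of \cite{Engelking=1989}. Local compactness of $R$ at $\mathbf{0}$ follows because $I$, being a compact subset of the locally compact Hausdorff space $\textbf{BR}_I(T,\theta)$, has a compact neighbourhood whose interior contains $I$ and is therefore $q$-saturated, so its $q$-image is an open neighbourhood of $\mathbf{0}$ with compact closure. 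Thus $(R,\widetilde{\tau})$ is a Hausdorff locally compact semitopological semigroup. Moreover, by Corollary~\ref{corollary-2.7} the topology $\tau_{\textbf{BR}}$ coincides with the sum direct topology $\tau^{\oplus}_{\textbf{BR}}$, so each $T_{i,i}$ is a compact subspace of $\textbf{BR}(T,\theta)$ homeomorphic to $T$; its image under the homeomorphism $q|_{\textbf{BR}(T,\theta)}$ is the compact subsemigroup $T_{i,i}$ of $R=\textbf{BR}^{\mathbf{0}}(T,\theta)$. Hence Theorem~\ref{theorem-3.11} applies to $(R,\widetilde{\tau})$ and yields two alternatives.

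It then remains to pull these alternatives back along $q$. If $(R,\widetilde{\tau})$ is topologically isomorphic to $\big(\textbf{BR}^{\mathbf{0}}(T,\theta),\tau^{\mathbf{Ac}}_{\textbf{BR}}\big)$, then $R$ is compact, being an Alexandroff one-point compactification; since $q$ is a perfect map, the full preimage $\textbf{BR}_I(T,\theta)=q^{-1}(R)$ of the compact space $R$ is compact, so $(\textbf{BR}_I(T,\theta),\tau)$ is a compact semitopological semigroup. If instead $(R,\widetilde{\tau})$ is topologically isomorphic to $\big(\textbf{BR}^{\mathbf{0}}(T,\theta),\tau^{\oplus}_{\textbf{BR}}\big)$, then $\mathbf{0}$ is an isolated point of $R$, whence $I=q^{-1}(\mathbf{0})$ is open in $\textbf{BR}_I(T,\theta)$ by continuity of $q$. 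This establishes the dichotomy.

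The step I expect to be the main obstacle is the second paragraph: verifying carefully that the quotient $(R,\widetilde{\tau})$ is genuinely locally compact (the delicate point being local compactness at $\mathbf{0}$) and that the compact subsemigroups $T_{i,i}$ persist inside $R$, so that Theorem~\ref{theorem-3.11} is legitimately applicable. Once the reduction to $\textbf{BR}^{\mathbf{0}}(T,\theta)$ is justified, transferring compactness and isolatedness back along the perfect map $q$ is routine.
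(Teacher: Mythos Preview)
Your proof is correct and follows essentially the same strategy as the paper: form the Rees quotient by $I$, show it is a Hausdorff locally compact semitopological semigroup so that the dichotomy of Theorem~\ref{theorem-3.11} applies, and then lift each alternative back to $\textbf{BR}_I(T,\theta)$. The only differences are technical: you observe that the quotient map $q$ is perfect and use standard properties of perfect maps, whereas the paper verifies that $q$ is hereditarily quotient and invokes the Din' N'e T'ong theorem to obtain local compactness of the quotient, and then uses a direct open-cover argument (rather than the perfect-preimage property) to deduce compactness of $\textbf{BR}_I(T,\theta)$ in the first alternative.
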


\begin{proof}
Suppose that the ideal $I$ is not open. By Lemma~\ref{lemma-4.3} the Rees-quotient semigroup $\textbf{BR}_I(T,\theta)/I$ with the quotient topology $\tau_\mathbf{q}$ is a semitopological semigroup. Let $\pi\colon \textbf{BR}_I(T,\theta)\to \textbf{BR}_I(T,\theta)/I$ be the natural homomorphism which is a quotient map. It is obvious that the Rees-quotient semigroup $\textbf{BR}_I(T,\theta)/I$ is isomorphic to the Bruck–Reilly extension with adjoined zero $\textbf{BR}^{\mathbf{0}}(T,\theta)$  and the image $(I)\pi$ is zero $\mathbf{0}$ of the semigroup $\textbf{BR}^{\mathbf{0}}(T,\theta)$.

We show that the natural homomorphism $\pi\colon \textbf{BR}_I(T,\theta)\to \textbf{BR}_I(T,\theta)/I$ is a hereditarily quotient map. In particular, we show that for every open neighbourhood $U(I)$ of the compact ideal $I$ in $(\textbf{BR}_I(T,\theta)/I,\tau_\mathbf{q})$  the image $(U(I))\pi$ is an open neighbourhood of the zero $\mathbf{0}$ in $(\textbf{BR}_I(T,\theta)/I,\tau_\mathbf{q})$. Indeed, $\textbf{BR}_I(T,\theta)/I\setminus U(I)$ is a closed
subset of $(\textbf{BR}_I(T,\theta)/I,\tau_\mathbf{q})$. Also, since the restriction \linebreak $\pi{\upharpoonleft}_{\textbf{BR}(T,\theta)}\colon \textbf{BR}(T,\theta)\to (\textbf{BR}(T,\theta))\pi$ of the natural homomorphism
$\pi\colon \textbf{BR}_I(T,\theta)\to \textbf{BR}_I(T,\theta)/I$ is one-to-one, $(\textbf{BR}_I(T,\theta)/I\setminus U(I))\pi$ is a closed subset of
$(\textbf{BR}_I(T,\theta)/I,\tau_\mathbf{q})$. Hence, $(U(I))\pi$ is an open neighbourhood of the zero $\mathbf{0}$ of the semigroup
$(\textbf{BR}_I(T,\theta)/I,\tau_\mathbf{q})$, and this implies that  the natural homomorphism $\pi\colon \textbf{BR}_I(T,\theta)\to \textbf{BR}_I(T,\theta)/I$ is a hereditarily quotient map.

Since $I$ is a compact ideal of the semitopological semigroup $(\textbf{BR}_I(T,\theta),\tau)$, the preimage $(y)\pi^{-1}$ is a compact subset of $(\textbf{BR}_I(T,\theta),\tau)$ for every $y\in \textbf{BR}_I(T,\theta)/I$. By the Din’ N’e T’ong Theorem the image of a locally compact Hausdorff space under a hereditary quotient map with compact fibers into a Hausdorff space is locally compact (see \cite{Din-Ne-Tong=1963} or \cite[3.7.E]{Engelking=1989}), and hence the space  $(\textbf{BR}_I(T,\theta)/I,\tau_\mathbf{q})$ is Hausdorff and locally compact. Since the ideal $I$ is not open, by Theorem~\ref{theorem-3.14} the semitopological semigroup $(\textbf{BR}_I(T,\theta)/I,\tau_\mathbf{q})$ is topologically isomorphic to $\left(\textbf{BR}^{\mathbf{0}}(T,\theta),\tau^{\mathbf{Ac}}_{\textbf{BR}}\right)$, and hence, it is compact.

Next, we show that the space $(\textbf{BR}_I(T,\theta),\tau)$ is compact. Let $\mathscr{U}=\{U_\alpha\colon \alpha\in \mathscr{I}\}$ be any open cover of $(\textbf{BR}_I(T,\theta),\tau)$. Since the ideal $I$ is compact, it can be covered by some finite subfamily $\mathscr{U}'=\{U_{\alpha_1}, \ldots, U_{\alpha_k}\}$ of $\mathscr{U}$. Put $U=U_{\alpha_1}\cup \cdots\cup U_{\alpha_k}$. Then $\textbf{BR}_I(T,\theta)\setminus U$ is a closed subset of $(\textbf{BR}_I(T,\theta),\tau)$. Since the restriction $\pi{\upharpoonleft}_{\textbf{BR}(T,\theta)}\colon \textbf{BR}(T,\theta)\to (\textbf{BR}(T,\theta))\pi$ of the natural homomorphism $\pi\colon \textbf{BR}_I(T,\theta)\to \textbf{BR}_I(T,\theta)/I$ is one-to-one, the image $(\textbf{BR}_I(T,\theta)\setminus U)\pi$
is a closed subset of the space $(\textbf{BR}_I(T,\theta)/I,\tau_\mathbf{q})$, and hence the image $(\textbf{BR}_I(T,\theta)\setminus U)\pi$ is compact, because the semitopological semigroup $(\textbf{BR}_I(T,\theta)/I,\tau_\mathbf{q})$ is compact. Thus, the set $\textbf{BR}_I(T,\theta)\setminus U$ is compact, and hence,  there exists a finite subfamily $\mathscr{U}''$ of $\mathscr{U}$, which is an open cover of $\textbf{BR}_I(T,\theta)\setminus U$. Then $\mathscr{U}'\cup\mathscr{U}''$ is a finite cover of the space $(\textbf{BR}_I(T,\theta),\tau)$$(\textbf{BR}_I(T,\theta),\tau)$. Hence the space $(\textbf{BR}_I(T,\theta),\tau)$ is compact, too.
\end{proof}

Theorem~\ref{theorem-4.4} implies the following:

\begin{theorem}\label{theorem-4.5}
Let $S$ be a Hausdorff semitopological simple inverse $\omega$-semigroup such that every maximal subgroup of $S$ is compact. Let $S_I=S\sqcup I$, $\tau$ be a Hausdorff locally compact shift-continuous topology on $S_I$, and $I$ be a compact ideal of $S_I$. Then either $(S_I, \tau)$ is a compact semitopological semigroup or the ideal $I$ is open.
\end{theorem}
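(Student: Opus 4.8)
The plan is to reduce Theorem~\ref{theorem-4.5} directly to Theorem~\ref{theorem-4.4}. By Theorem~\ref{theorem-2.9}, the Hausdorff semitopological simple inverse $\omega$-semigroup $S$ with compact maximal subgroups is topologically isomorphic to the Bruck--Reilly extension $\left(\textbf{BR}(T,\theta),\tau_{\textbf{BR}}^{\oplus}\right)$ of a finite semilattice $T=\left[E;G_\alpha,\varphi_{\alpha,\beta}\right]$ of compact groups $G_\alpha$ in the class of topological inverse semigroups. In particular, by Proposition~\ref{proposition-2.2}, $T$ is a compact Hausdorff topological inverse semigroup, hence a compact Hausdorff semitopological semigroup, and $\left(\textbf{BR}(T,\theta),\tau_{\textbf{BR}}^{\oplus}\right)$ is a topological Bruck--Reilly extension of $T$ in the class of Hausdorff semitopological semigroups (indeed in the class of topological inverse semigroups, by Proposition~\ref{proposition-2.6} or Theorem~\ref{theorem-2.8}).

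First I would fix a topological isomorphism $\psi\colon S\to\textbf{BR}(T,\theta)$ coming from Theorem~\ref{theorem-2.9}. Then the semigroup $S_I=S\sqcup I$ is, as an abstract semigroup, exactly the disjoint union $\textbf{BR}_I(T,\theta)=\textbf{BR}(T,\theta)\sqcup I$ obtained by transporting the ideal structure along $\psi$; that is, the map $\psi$ extends to a semigroup isomorphism $\widehat{\psi}\colon S_I\to\textbf{BR}_I(T,\theta)$ which is the identity on $I$. Transporting the given Hausdorff locally compact shift-continuous topology $\tau$ from $S_I$ along $\widehat{\psi}$ produces a Hausdorff locally compact shift-continuous topology on $\textbf{BR}_I(T,\theta)$ for which $I$ remains a compact ideal. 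Here I should note that the topology $\tau$ restricted to $S$ need not a priori coincide with $\tau_{\textbf{BR}}^{\oplus}$; however, Theorem~\ref{theorem-4.4} does not require this, since its hypothesis on $\left(\textbf{BR}(T,\theta),\tau_{\textbf{BR}}\right)$ only asks that it be \emph{a} topological Bruck--Reilly extension of $T$ in the class of Hausdorff semitopological semigroups, and by Theorem~\ref{theorem-2.9} the subspace topology $\tau|_S$ (equivalently $\tau_{\textbf{BR}}^{\oplus}$ under $\psi$) is of exactly this form.

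Having arranged this, I would simply invoke Theorem~\ref{theorem-4.4} for the compact Hausdorff (semi)topological semigroup $T$, the topological Bruck--Reilly extension $\left(\textbf{BR}(T,\theta),\tau_{\textbf{BR}}^{\oplus}\right)$, the semigroup $\textbf{BR}_I(T,\theta)$, the compact ideal $I$ and the transported topology. Theorem~\ref{theorem-4.4} yields the dichotomy: either $\textbf{BR}_I(T,\theta)$ with this topology is a compact semitopological semigroup, or $I$ is open. Pulling this conclusion back along the homeomorphism $\widehat{\psi}^{-1}$ gives the corresponding statement for $(S_I,\tau)$: either $(S_I,\tau)$ is compact or $I$ is open. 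This finishes the proof.

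The only genuinely delicate point --- and hence the step I would write out most carefully --- is verifying that $\widehat{\psi}$ is well defined as a semigroup isomorphism and that the hypotheses of Theorem~\ref{theorem-4.4} are literally met after the transport: namely that $I$ (viewed inside $\textbf{BR}_I(T,\theta)$) is a two-sided ideal with $\textbf{BR}_I(T,\theta)=\textbf{BR}(T,\theta)\sqcup I$, that the transported topology is Hausdorff, locally compact and shift-continuous, and that $\left(\textbf{BR}(T,\theta),\tau_{\textbf{BR}}^{\oplus}\right)$ is a topological Bruck--Reilly extension of the compact $T$ in the required class. All of these are immediate from $\psi$ being a topological isomorphism together with Theorem~\ref{theorem-2.9} and Propositions~\ref{proposition-2.2} and \ref{proposition-2.6}; I do not anticipate any real obstacle beyond this bookkeeping.
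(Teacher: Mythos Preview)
Your proposal is correct and follows exactly the paper's approach: the paper simply records that Theorem~\ref{theorem-4.5} is an immediate consequence of Theorem~\ref{theorem-4.4} (via the identification of $S$ with a Bruck--Reilly extension provided by Theorem~\ref{theorem-2.9}), and your write-up spells out precisely this reduction together with the routine bookkeeping.
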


Since every Bruck–Reilly extension of a monoid contains an isomorphic copy the bicyclic monoid $\mathscr{C}(p, q)$ and compact topological semigroups
do not contain the semigroup $\mathscr{C}(p, q)$, Theorem~\ref{theorem-4.4} implies the following corollary.

\begin{corollary}\label{corollary-4.6}
Let $T$ be a compact Hausdorff topological semigroup and \emph{$\left(\textbf{BR}(T,\theta),\tau_{\textbf{BR}}\right)$}  be a topological Bruck–Reilly extension of $T$ in the class of Hausdorff semitopological semigroups. Let \emph{$\textbf{BR}_I(T,\theta)=\textbf{BR}(T,\theta)\sqcup I$} and $\tau$ be a Hausdorff locally compact shift-continuous topology on \emph{$\textbf{BR}_I(T,\theta)$}, where $I$ is a compact ideal of $(\textbf{BR}_I(T,\theta),\tau)$. Then  the ideal $I$ is open in $(\textbf{BR}_I(T,\theta),\tau)$.
\end{corollary}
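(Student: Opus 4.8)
The plan is to read the statement off Theorem~\ref{theorem-4.4} once the alternative ``$(\textbf{BR}_I(T,\theta),\tau)$ is compact'' has been excluded, and to exclude it by means of the classical fact of Anderson, Hunter and Koch~\cite{Anderson-Hunter-Koch=1965} that the bicyclic monoid $\mathscr{C}(p,q)$ admits no isomorphic embedding into a compact Hausdorff \emph{topological} semigroup.

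Concretely, I would first apply Theorem~\ref{theorem-4.4} verbatim to $\textbf{BR}_I(T,\theta)=\textbf{BR}(T,\theta)\sqcup I$ with the topology $\tau$: all of its hypotheses are among the present ones, so we obtain that \emph{either} $(\textbf{BR}_I(T,\theta),\tau)$ is a compact semitopological semigroup \emph{or} the ideal $I$ is open. In the second case there is nothing to prove, so assume the first. At this point I would invoke the two structural facts that the stronger hypothesis ``$T$ is a compact Hausdorff topological semigroup'' (as opposed to merely semitopological) provides: by Theorem~\ref{theorem-2.8} together with Corollary~\ref{corollary-2.7}, $(\textbf{BR}(T,\theta),\tau_{\textbf{BR}})$ is a Hausdorff topological semigroup whose topology is the sum direct topology $\tau_{\textbf{BR}}^{\oplus}$; and $\{(i,1_T,j)\colon i,j\in\omega\}$ is a subsemigroup of $\textbf{BR}(T,\theta)$ isomorphic to $\mathscr{C}(p,q)$, since the product formula collapses to $(i,1_T,j)\cdot(k,1_T,l)=(i+k-\min\{j,k\},\,1_T,\,j+l-\min\{j,k\})$ because $(1_T)\theta^{m}=1_T$. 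Hence, by Anderson--Hunter--Koch, the topological semigroup $(\textbf{BR}(T,\theta),\tau_{\textbf{BR}})$ is not compact.

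It remains to turn this into a contradiction with the compactness of $(\textbf{BR}_I(T,\theta),\tau)$. Since $I$ is compact it is closed in the Hausdorff space $(\textbf{BR}_I(T,\theta),\tau)$, so $\textbf{BR}(T,\theta)=\textbf{BR}_I(T,\theta)\setminus I$ is open there; the point I would then push on is that, under the standing assumption that $(\textbf{BR}_I(T,\theta),\tau)$ is compact, $\textbf{BR}(T,\theta)$ is also \emph{closed} in $(\textbf{BR}_I(T,\theta),\tau)$ --- being then a closed subspace of a compact space it is compact, contradicting the previous paragraph. The natural tools here are the description of the Rees quotient $\textbf{BR}_I(T,\theta)/I$ extracted from the proof of Theorem~\ref{theorem-4.4} (in the compact alternative it is topologically isomorphic to $\left(\textbf{BR}^{\mathbf{0}}(T,\theta),\tau^{\mathbf{Ac}}_{\textbf{BR}}\right)$ via a projection $\pi$ that restricts to a homeomorphism on $\textbf{BR}(T,\theta)$), together with Proposition~\ref{proposition-4.2} applied to $\mathrm{cl}_{\textbf{BR}_I(T,\theta)}(\textbf{BR}(T,\theta))$, which is a compact Hausdorff semitopological monoid in which $\textbf{BR}(T,\theta)$ is open and dense. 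I expect precisely this last step --- establishing that compactness of $(\textbf{BR}_I(T,\theta),\tau)$ already forces $\textbf{BR}(T,\theta)$ to be closed, equivalently that a compact, non-open ideal $I$ cannot leave the complement $\textbf{BR}(T,\theta)$ a topological semigroup --- to be the genuine content and the main obstacle; the remainder is a direct appeal to Theorem~\ref{theorem-4.4}, Theorem~\ref{theorem-2.8} and the Anderson--Hunter--Koch theorem.
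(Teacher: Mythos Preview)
The paper's argument is a one-liner that avoids your detour entirely. It reasons directly on $(\textbf{BR}_I(T,\theta),\tau)$: by Theorem~\ref{theorem-4.4} either this space is compact or $I$ is open, and since $\textbf{BR}_I(T,\theta)$ itself contains an isomorphic copy of the bicyclic monoid $\mathscr{C}(p,q)$ (namely $\{(i,1_T,j):i,j\in\omega\}$), the Anderson--Hunter--Koch theorem excludes the compact alternative outright. There is no passage to the subspace $\textbf{BR}(T,\theta)$, no appeal to Theorem~\ref{theorem-2.8}, and no attempt to show that subspace is closed.

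Your proposed final step --- that compactness of $(\textbf{BR}_I(T,\theta),\tau)$ forces $\textbf{BR}(T,\theta)$ to be closed --- is not merely the main obstacle, it is false. Take $T$ trivial and $I=\{\mathbf{0}\}$: then $\textbf{BR}_I(T,\theta)=\mathscr{C}^0$ with the topology $\tau_{\operatorname{\textsf{Ac}}}$ of Example~\ref{example-3.3} is compact Hausdorff semitopological, yet $\mathscr{C}(p,q)$ is dense, not closed, and $I=\{\mathbf{0}\}$ is not open. What you have actually detected is a defect in the \emph{statement} of the corollary: the Anderson--Hunter--Koch theorem applies only to compact \emph{topological} semigroups, whereas $\tau$ is assumed merely shift-continuous, and the example just given shows the conclusion fails under that hypothesis. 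The paper evidently intends $\tau$ to be a semigroup topology (note that Corollary~\ref{corollary-4.7}, which is deduced from this one, carries precisely that hypothesis); under that reading the paper's direct argument goes through immediately, while your route through closedness of $\textbf{BR}(T,\theta)$ is still blocked by the same counterexample and is in any case unnecessary.
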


Corollary~\ref{corollary-4.6} implies

\begin{corollary}\label{corollary-4.7}
Let $S$ be a Hausdorff semitopological simple inverse $\omega$-semigroup such that every maximal subgroup of $S$ is compact. Let $S_I=S\sqcup I$, $\tau$ be a Hausdorff locally compact semigroup topology on $S_I$, and $I$ be a compact ideal of $S_I$. Then the ideal $I$ is open in $(S_I, \tau)$.
\end{corollary}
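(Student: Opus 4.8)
The plan is to transport the problem to a Bruck--Reilly model and then quote Corollary~\ref{corollary-4.6}. First I would restrict $\tau$ to the subsemigroup $S$: the restriction of a separately continuous operation is separately continuous and a subspace of a Hausdorff space is Hausdorff, so $(S,\tau|_S)$ is a Hausdorff semitopological simple inverse $\omega$-semigroup, and by hypothesis each of its maximal subgroups (carrying the subspace topology) is compact. Hence Theorem~\ref{theorem-2.9} applies and yields a topological isomorphism $h\colon(S,\tau|_S)\to(\textbf{BR}(T,\theta),\tau_{\textbf{BR}}^{\oplus})$, where $T=\left[E;G_\alpha,\varphi_{\alpha,\beta}\right]$ is a finite semilattice of compact groups $G_\alpha$. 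By Proposition~\ref{proposition-2.2} the semigroup $T$ is a compact Hausdorff topological (in fact, inverse) semigroup, and by Proposition~\ref{proposition-2.6} the space $(\textbf{BR}(T,\theta),\tau_{\textbf{BR}}^{\oplus})$ is a topological Bruck--Reilly extension of $T$ in the class of Hausdorff semitopological semigroups.

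Next I would push the whole structure across. Extend $h$ to a bijection $\widetilde h\colon S_I\to\textbf{BR}_I(T,\theta)$ acting as $h$ on $S$ and as the identity on $I$, and equip $\textbf{BR}_I(T,\theta)=\textbf{BR}(T,\theta)\sqcup I$ with the semigroup operation transported from $S_I$; since $I$ is a two-sided ideal of $S_I$, the set $I$ is a two-sided ideal of $\textbf{BR}_I(T,\theta)$. Transport the topology $\tau$ along $\widetilde h$ to a topology $\widetilde\tau$ on $\textbf{BR}_I(T,\theta)$. Because $\widetilde h$ is a homeomorphism onto, the topology $\widetilde\tau$ is Hausdorff, locally compact and shift-continuous, its restriction to $\textbf{BR}(T,\theta)$ coincides with $\tau_{\textbf{BR}}^{\oplus}$, and $I$ is a compact ideal of $(\textbf{BR}_I(T,\theta),\widetilde\tau)$. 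All the hypotheses of Corollary~\ref{corollary-4.6} now hold (with $\tau_{\textbf{BR}}=\tau_{\textbf{BR}}^{\oplus}$), so $I$ is open in $(\textbf{BR}_I(T,\theta),\widetilde\tau)$; applying the homeomorphism $\widetilde h^{-1}$ shows that $I$ is open in $(S_I,\tau)$.

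Since each step merely transfers structure along isomorphisms, the argument is essentially bookkeeping once Theorem~\ref{theorem-2.9} and Corollary~\ref{corollary-4.6} are available, and I do not expect a genuine obstacle; the points that require care are reading ``every maximal subgroup of $S$ is compact'' in the subspace topology so that Theorem~\ref{theorem-2.9} is applicable to $(S,\tau|_S)$, and observing (via Proposition~\ref{proposition-2.2}) that the semilattice-of-groups model $T$ furnished by that theorem is a compact \emph{topological} semigroup, as Corollary~\ref{corollary-4.6} requires. A slightly different route, should one wish to avoid the explicit transport, is to invoke the shift-continuous predecessor Theorem~\ref{theorem-4.5}: under a semigroup topology $(S_I,\tau)$ is a topological semigroup, so if it were compact it would contain an isomorphic copy of the bicyclic monoid $\mathscr{C}(p,q)$ inherited from $S\cong\textbf{BR}(T,\theta)$, contradicting the fact that $\mathscr{C}(p,q)$ does not embed into a Hausdorff compact topological semigroup \cite{Anderson-Hunter-Koch=1965}; hence the compact alternative of Theorem~\ref{theorem-4.5} is excluded and $I$ must be open in $(S_I,\tau)$.
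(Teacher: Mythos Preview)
Your proposal is correct and follows essentially the paper's own approach: the paper derives Corollary~\ref{corollary-4.7} directly from Corollary~\ref{corollary-4.6}, and your first route makes explicit exactly the transport (via Theorem~\ref{theorem-2.9} and Proposition~\ref{proposition-2.2}) that this one-line deduction leaves implicit. Your alternative route through Theorem~\ref{theorem-4.5} together with the Anderson--Hunter--Koch obstruction is equally valid and mirrors precisely how the paper obtains Corollary~\ref{corollary-4.6} from Theorem~\ref{theorem-4.4}; it has the minor advantage of using the semigroup-topology hypothesis on $\tau$ directly to rule out the compact alternative, rather than routing through Corollary~\ref{corollary-4.6}.
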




\end{document}